\numberwithin{equation}{section}
\DeclareMathOperator{\diag}{diag}
\newcommand{\ang}[1]{\left\langle {#1} \right\rangle}
\newcommand {\average}[1] {\mbox{$\left\{\!\!\left\{ #1 \right\}\!\!\right\}$}}
\newcommand {\jump}[1] {\mbox{$\left[\!\left[ #1 \right]\!\right]$}}
\newtheorem{theorem}{Theorem}[section]
\newtheorem{lemma}[theorem]{Lemma}
\newcommand{\DESCRIPTION@original@item}{}
\let\DESCRIPTION@original@item\item
\newcommand*{\DESCRIPTION@envir}{DESCRIPTION}
\newlength{\DESCRIPTION@totalleftmargin}
\newlength{\DESCRIPTION@linewidth}
\newcommand{\DESCRIPTION@makelabel}[1]{\llap{#1}}%
\newcommand{\DESCRIPTION@item}[1][]{%
	\setlength{\@totalleftmargin}%
	{\DESCRIPTION@totalleftmargin+\widthof{\textbf{#1 }}-\leftmargin}%
	\setlength{\linewidth}
	{\DESCRIPTION@linewidth-\widthof{\textbf{#1 }}+\leftmargin}%
	\par\parshape \@ne \@totalleftmargin \linewidth
	\DESCRIPTION@original@item[\textbf{#1}]%
}
\begin{document}

\title[Stochastic Discontinuous Galerkin for Robust Deterministic Control]{Stochastic Discontinuous Galerkin Methods for Robust Deterministic Control of Convection Diffusion Equations with Uncertain Coefficients}


\author[1]{\fnm{Pelin} \sur{\c{C}\.{i}lo\u{g}lu}}\email{pciloglu@metu.edu.tr}
\author*[1]{\fnm{Hamdullah} \sur{ Y\"ucel}}\email{yucelh@metu.edu.tr}

\affil[1]{\orgdiv{Institute of Applied Mathematics}, \orgname{Middle East Technical University}, \orgaddress{ \city{Ankara}, \postcode{06800},  \country{Turkey}}}


\abstract{We investigate a numerical behaviour of robust deterministic optimal control problem subject to a convection diffusion equation containing  uncertain inputs. Stochastic Galerkin approach, turning the original optimization problem containing uncertainties into a large system of deterministic problems, is applied to discretize the stochastic domain, while a discontinuous Galerkin method is preferred  for the spatial discretization due to its better convergence behaviour for optimization problems governed by convection dominated PDEs. Error analysis is done for the state and adjoint variables in the energy norm, while the estimates of deterministic control is obtained in the $L^2$--norm. Large matrix system emerging from the stochastic Galerkin method is addressed by the low--rank version of GMRES method, which reduces both the computational complexity and the memory requirements by employing  Kronecker--product structure of the obtained  linear system. Benchmark examples with and without control constraints are presented to illustrate the efficiency of the proposed methodology.}

\keywords{PDE-constrained optimization, uncertainty quantification, stochastic discontinuous Galerkin, error estimates, low--rank approximation}


\pacs[MSC Classification]{35R60, 49J20, 60H15, 60H35}

\maketitle

\section{Introduction}\label{sec:intro}

In many phenomena in physics or engineering applications, certain parameters of a model are optimized in order to reach the desired target, for instance,  the location where the oil is inserted into the medium, the temperature of a melting/heating process, or the shape of the aircraft wings.  Such real-world phenomena can  be modelled as optimal control problems or optimization problems with PDE constraints. However, in reality, the input parameters of these simulations, such as the wind speed or material properties, are not often known due to the missing information or inherent variability in the problem; see, e.g., \cite{PJRoache_1988}.  Therefore, in the last decade, the idea of uncertainty quantification, i.e., quantifying the effects of uncertainty on the result of a computation, has become a powerful tool for modeling physical phenomena in the scientific community.

PDE--constraint optimization problems with uncertainty have been studied in various formulations in the literature, such as mean--based control \cite{ABorzi_2010a,ABorzi_VSchulz_CSchillings_GvonWinckel_2010a}, pathwise control \cite{FNegri_AManzoni_GRozza_2015,AAAli_EUllman_MHinze_2017}, average control \cite{MLazar_EZuazua_2014,EZuazua_2014}, robust deterministic control \cite{SGarreis_MUlbrich_2017,MDGunzburger_HCLee_JLee_2011a,LSHou_JLee_HManouzi_2011,DPKouri_MHeinkenschloss_DRidzal_BGWaanders_2013,HCLee_JLee_2013,ERosseel_GNWells_2012}, and   robust stochastic control \cite{PBenner_AOnwunto_MStoll_2016,PChen_AQuarteroni_GRozza_2013,AKunoth_CSchwab_2016,HTiesler_RMKirby_DXiu_TPreusser_2012a}. Robust deterministic control is more practical and realistic since randomness cannot be observed during the design of the control. Therefore,  we are here  interested with the following  robust deterministic control problem
\begin{eqnarray}\label{eqn:objec}
	\min \limits_{u \in \mathcal{U}^{ad}} \; \mathcal{J}(y,u) := \frac{1}{2} \lVert y-y^d\rVert^2_{\mathcal{X}} + \frac{\gamma}{2} \lVert \text{std}(y) \rVert^2_{\mathcal{W}} + \frac{\mu}{2} \lVert u \rVert^2_{\mathcal{U}}
\end{eqnarray}
governed by
\begin{subequations}\label{eqn:moperator}
	\begin{align}
		\mathcal{S}\big(y(\boldsymbol{x},\omega)\big)  & =  f(\boldsymbol{x}) + u(\boldsymbol{x})  \quad \; \hbox{in} \;\; \mathcal{D} \times \Omega, \\
		y(\boldsymbol{x},\omega) & =  y_{DB}(\boldsymbol{x}) \qquad  \quad \; \hbox{on} \; \;  \partial \mathcal{D} \times \Omega,
	\end{align}
\end{subequations}
where $ \mathcal{S} : \mathcal{Y} \rightarrow  \mathcal{Y}' $ is a linear operator that contains uncertain parameters, $\mathcal{D} \subset \mathbb{R}^2$ is a convex bounded polygonal set with a Lipschitz boundary $\partial \mathcal{D}$, and $\Omega$ is a sample space of events. The cost functional including a risk penalization via the standard deviation $\text{std}(y)$ is denoted by $\mathcal{J}(y,u)$. The first term in \eqref{eqn:objec} is a measure of the distance between the state variable $y$ and the desired state $y^d$ in terms of expectation of $y-y^d$. Without loss of generality, we assume that the state $y \in \mathcal{Y}$ is a random field, whereas the desired state $y^d \in \mathcal{Y}$ is modelled deterministically. The second term  measures the standard deviation of $y$, which is added since it is desirable to have a control for which the state is more accurately known, leading to a risk averse optimum.  The last term corresponds to distributive deterministic control. The constant $\mu >0 $ is a positive regularization parameter  of the control $u$, whereas $\gamma \geq 0$ is a risk--aversion parameter. Deterministic source function and Dirichlet boundary conditions are denoted by $f$ and $y_{DB}$, respectively. We note that the cost functional $\mathcal{J}$ is a deterministic quantity although it contains uncertain inputs. Further, the closed convex admissible set in the control space $\mathcal{U}$ is defined by
\begin{equation}\label{defn:add}
	\mathcal{U}^{ad} :=  \{ u \in \mathcal{U} : \; u_a \leq  u(\boldsymbol{x}) \leq u_b, \;\; \forall \boldsymbol{x} \in \mathcal{D} \},
\end{equation}
where constants $u_a, u_b \in \mathbb{R}$ with $u_a \leq u_b$.

Finding an approximate solution for the optimization problems containing uncertainty \eqref{eqn:objec}--\eqref{eqn:moperator} is extremely challenging and requires much more computational resources than the ones in the deterministic setting. In the literature, there exist various competing methods to solve such kinds of problem, for instance,  Monte Carlo (MC)  \cite{AAAli_EUllman_MHinze_2017,PAGuth_VKaarnioja_FYKuo_CSchillings_IHSloan_2021a,AVBarel_SVandewalle_2019}, stochastic collocation method (SCM) \cite{ABorzi_GvonWinckel_2009a,LGe_TSun_2019a,HTiesler_RMKirby_DXiu_TPreusser_2012a,ERosseel_GNWells_2012}, and stochastic Galerkin method (SGM) \cite{MDGunzburger_HCLee_JLee_2011a,HCLee_JLee_2013,ERosseel_GNWells_2012,TSun_WShen_BGong_WLiu_2016a,DDurrwachter_TKuhn_FMeyer_LSchacchter_FSchneider_2020}. Although the MC method is popular for its simplicity,  natural parallelization, and broad applications, it features slow convergence, which does not depend on the number of uncertain parameters \cite{GSFishman_1996,JSLiu_2008a}. For the SCMs, the crucial issue  is how to construct the set of collocation points appropriately because the choice of the collocation points determines the efficiency of the method. In contrast to the MC approach and the SCM, the SGM is a nonsampling technique, which transforms the problem into a large system of deterministic problems.  As in the classic (deterministic) Galerkin method, the idea behind the SGM is to seek a solution for the model equation such that the residue is orthogonal to the space of polynomials. Since the random process is expressed as an expansion with the help of orthogonal polynomials, the SGM is considered as  a variant  of the  generalized polynomial chaos approximation \cite{DXiu_GEKarniadakis_2002,GPoette_BDespres_DLucor_2009,POffner_JGlaubitz_HRanocha_2018}  as the stochastic collocation method. An important feature of the SGM is the separation of the spatial and stochastic variables, which  allows a reuse of established numerical techniques. The results obtained in \cite{ERosseel_GNWells_2012}  also show that the SGM generally displays superior performance compared to the SCM for the  robust deterministic control problems. Within the framework of the aforementioned features, the stochastic Galerkin method is preferred as a stochastic method in this study. On the other hand, for the discretization of the spatial domain, we use a discontinuous Galerkin method due to its better convergence behaviour for the optimization problems governed by convection dominated PDEs; see, e.g., \cite{DLeykekhman_MHeinkenschloss_2012a,HYucel_PBenner_2015a,HYucel_MHeinkenschloss_BKarasozen_2013}. We also refer to \cite{DNArnold_FBrezzi_BCockburn_LDMarini_2002a,BRiviere_2008a} and references therein for more details on the discontinuous Galerkin methods.

In spite of these nice properties exhibited by the stochastic discontinuous Galerkin method, the dimension of the resulting linear system increases rapidly, called as the curse of dimensionality. As a remedy, we apply a low--rank variant of  generalized minimal residual (GMRES) method \cite{YSaad_MHSchultz_1986} with a suitable preconditioner.  With the help of
a Kronecker--product structure of the obtained large matrices, we reduce  both the computational complexity and memory requirements; see, e.g., \cite{JBallani_LGrasedyck_2013,DKressner_CTobler_2011,MStoll_TBreiten_2015}. Low-rank approximation of the optimal control problems with uncertain terms have been also studied in \cite{PBenner_AOnwunto_MStoll_2016,PBenner_SDolgov_AOnwunta_MStoll_2016a,PBenner_SDolgov_AOnwunta_MStoll_2020a} for unconstrained control problems and in \cite{SGarreis_MUlbrich_2017a} for control constraint problems. In the aforementioned studies, randomness is generally defined on the diffusion parameter; however, we here consider the randomness  on diffusion or convection parameters by applying the discontinuous Galerkin method in the spatial domain. In addition, according to the best of our knowledge, a low--rank approximation of the optimal control problems governed by convection dominated equations containing randomness has not been discussed  before in the setting of discontinuous Galerkin discretization in the spatial domain.

We organize our paper by first discussing the existence of the solution  in the next section. In Section~\ref{sec:finite}, we reduce the problem into finite dimensional setting via Karhunen--Lo\`{e}ve (KL) expansion, stochastic Galerkin method, and symmetric interior penalty Galerkin method. Error analyses are done in Section~\ref{sec:apriori}. In Section~~\ref{sec:matrix}, we construct the matrix formulation of the underlying optimization problem by proceeding the optimize-then-discretize approach, and then
discuss  implementation of the low--rank GMRES solver. Results of the numerical experiments are provided in Section~\ref{sec:num} to illustrate  the efficiency of the proposed methodology. Finally, we end the paper with  some conclusions and discussions in Section~\ref{sec:conc}.


\section{Existence and uniqueness of the solution}\label{sec:model}

Let $\Omega$ be a sample space of events, $\mathcal{F} \subset 2^{\Omega}$ denotes a $\sigma$--algebra, and $\mathbb{P}$ is the associated probability measure that maps the events in  $\mathcal{F}$ to probabilities in $ [0,1]$.  A generic random field $\eta$ on the  probability space $(\Omega, \mathcal{F}, \mathbb{P})$ is denoted by $\eta(\boldsymbol{x},\omega):\mathcal{D} \times \Omega \rightarrow \mathbb{R}$.  For a fixed $\boldsymbol{x} \in \mathcal{D}$, $\eta(\boldsymbol{x},\cdot)$ is a real--valued square integrable random variable $\eta(\boldsymbol{x}, \cdot) \in L^2(\Omega, \mathcal{F}, \mathbb{P})$, i.e.,
\begin{equation*}
	L^2(\Omega):= L^2(\Omega, \mathcal{F}, \mathbb{P}) : = \{X: \Omega \rightarrow \mathbb{R}\, : \; \int_{\Omega} \lvert X(\omega) \rvert^2 \, d\mathbb{P}(\omega) < \infty  \}.
\end{equation*}
Then, the mean $\mathbb{E}[\eta]$, the standard deviation $\text{std}(\eta)$, and  the corresponding variance $\mathbb{V}(\eta)$  for any random field $\eta$, are given, respectively, by
\begin{align*}
	&\mathbb{E}[\eta] = \int_{\Omega}  \eta  \, d \mathbb{P}(\omega), \qquad  \qquad \quad  \text{std}(\eta) =\left[ \int_{\Omega} \left( \eta - \mathbb{E}[\eta] \right)^2 \, d \mathbb{P}(\omega) \right]^{1/2}, \\
	& \mathbb{V}(\eta) = \left[\text{std}(\eta)\right]^2 = \mathbb{E}[\eta^2] - \left(\mathbb{E}[\eta]\right)^2.
\end{align*}
Recalling the tensor--product space $H^k(\mathcal{D}) \otimes L^2(\Omega)$ equipped with the norm
\begin{eqnarray}
	\|\eta\|_{H^k(\mathcal{D}) \otimes L^2(\Omega)} := \left( \int_{\Omega} \|\eta(\cdot,\omega)\|^2_{H^k(\mathcal{D})} \, d\mathbb{P}(\omega)\right)^{1/2} < \infty,
\end{eqnarray}
the state and control spaces are defined as follow, respectively,
\[
\mathcal{Y} := H_0^1(\mathcal{D}) \otimes L^2(\Omega)   \quad \hbox{and} \quad \mathcal{U}:= L^2(\mathcal{D}).
\]
We also set $\mathcal{X}:= {L^2(\mathcal{D}) \otimes L^2(\Omega)}$ and $\mathcal{W}=L^2(\mathcal{D})$.

In order to show  existence of the solution, it is assumed  that the operator $\mathcal{S}$ satisfies the following conditions:
\begin{itemize}
	\item[a)] $\mathcal{S}$ is coercive such that $\mathbb{P}$-a.s., $(\mathcal{S}v,v)   \geq c \lVert v \rVert_{\mathcal{X}}, \;\, \forall v \in \mathcal{X}$, where $c$ is a positive constant.
	\item[b)] $(\mathcal{S}u,v) =(u, \mathcal{S}^*v) \;\; \forall u,v \in \mathcal{X}$, where $\mathcal{S}^*$ is the adjoint of $\mathcal{S}$.
\end{itemize}
By following the standard arguments in the theory of optimal control, see, e.g., \cite[Theorem 1.3]{JLLions_1971}  and \cite[Theorem 2.14]{FTroeltzsch_2010a}, the existence and uniqueness of an optimal solution for the optimization problem \eqref{eqn:objec}--\eqref{eqn:moperator} can be proved. With the definitions above, $\mathcal{Y}$ and $\mathcal{U}$ are Hilbert spaces, the functional $\mathcal{J}$ is strictly convex, and the admissible set  $\mathcal{U}^{ad}$ is a closed and convex set. Then, according to Lion's Lemma \cite[Theorem 1.3]{JLLions_1971}, a unique optimal control $ \bar{u} \in \mathcal{U}$ exists and  the  variational inequality holds
\begin{eqnarray}\label{ineq:direc}
	\mathcal{J}'(\bar{u}) \cdot (\upsilon-\bar{u}) \geq 0, \quad \forall \upsilon \in \mathcal{U}^{ad}.
\end{eqnarray}

Now, we can state  the first  order optimality system of the optimization problem containing uncertain coefficients \eqref{eqn:objec}--\eqref{eqn:moperator}.

\begin{theorem}
	A pair $(y,u)$ is a unique  solution of the optimization  problem  \eqref{eqn:objec}--\eqref{eqn:moperator} if and only if there exists an adjoint  $ p \in \mathcal{Y} $ such that the  optimality system holds, $\mathbb{P}$-a.s., for the triplet $ (y(u), u, p(u)) \in \mathcal{Y} \times \mathcal{U}^{ad} \times \mathcal{Y}$
	\begin{subequations}\label{eqn:opti}
		\begin{align}
			& \mathcal{S}\big(y(u)\big)  = f(\boldsymbol{x}) + u(\boldsymbol{x}),  \label{eqn:Ay} \\
			& \mathcal{S}^* \big(p(u)\big) = y(u)  -y^d  +  \gamma \big( y(u) - \mathbb{E}[y(u)] \big), \\
			& \big( \mathbb{E}[p(u)] + \mu u, v-u \big) \geq  0,  \qquad     v \in \mathcal{U}^{ad}.
		\end{align}
	\end{subequations}	
\end{theorem}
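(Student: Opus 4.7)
The plan is to proceed via the standard reduced-cost-functional approach. I would first introduce the control-to-state map $u \mapsto y(u)$, which is well defined and affine linear by assumption (a) (coercivity of $\mathcal{S}$), and define the reduced cost functional $j(u) := \mathcal{J}(y(u),u)$. Lions' Lemma, already invoked in \eqref{ineq:direc}, then gives the first-order condition $j'(\bar u)\cdot(v-\bar u) \ge 0$ for all $v\in\mathcal{U}^{ad}$. The task reduces to rewriting $j'(\bar u)$ in a form that exposes the adjoint state $p$ and yields the three equations \eqref{eqn:opti}.

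The next step is to compute $j'(u)\cdot h$ for an admissible direction $h\in\mathcal{U}$. Differentiating the state equation shows that the sensitivity $z := y'(u)h$ satisfies $\mathcal{S} z = h$ $\mathbb{P}$-a.s., with homogeneous Dirichlet data. Differentiating the three terms of $\mathcal{J}$ termwise gives, for the tracking and Tikhonov terms, the standard contributions $(y(u)-y^d,z)_{\mathcal{X}}$ and $\mu(u,h)_{\mathcal{U}}$. For the variance term I would write
\begin{equation*}
\tfrac{\gamma}{2}\|\mathrm{std}(y)\|_{\mathcal{W}}^{2}
= \tfrac{\gamma}{2}\int_{\mathcal{D}} \mathbb{E}\bigl[(y-\mathbb{E}[y])^{2}\bigr]\,d\boldsymbol{x},
\end{equation*}
and differentiate under the integral/expectation. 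Using linearity of $\mathbb{E}[\cdot]$ and the fact that $\mathbb{E}[y-\mathbb{E}[y]]=0$, the cross term collapses and I obtain $\gamma\,(y(u)-\mathbb{E}[y(u)],z)_{\mathcal{X}}$. This careful bookkeeping with the centered term is the one subtlety I expect; everything else is mechanical.

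Next I would introduce the adjoint state $p\in\mathcal{Y}$ as the unique solution of
\begin{equation*}
\mathcal{S}^{*} p(u) = \bigl(y(u)-y^{d}\bigr) + \gamma\bigl(y(u)-\mathbb{E}[y(u)]\bigr),
\end{equation*}
whose well-posedness follows because $\mathcal{S}^{*}$ inherits coercivity from (a) and the right-hand side lies in $\mathcal{X}\subset\mathcal{Y}'$. Using the duality property (b), $(\mathcal{S}^{*}p,z)_{\mathcal{X}} = (p,\mathcal{S}z)_{\mathcal{X}} = (p,h)_{\mathcal{X}}$, so the first two contributions to $j'(u)\cdot h$ consolidate to $(p,h)_{\mathcal{X}}$. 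Because $h\in\mathcal{U}=L^{2}(\mathcal{D})$ is deterministic, Fubini gives $(p,h)_{\mathcal{X}} = (\mathbb{E}[p],h)_{\mathcal{U}}$, and therefore
\begin{equation*}
j'(u)\cdot h = \bigl(\mathbb{E}[p(u)]+\mu u,\, h\bigr)_{\mathcal{U}}.
\end{equation*}
Choosing $h=v-\bar u$ and invoking \eqref{ineq:direc} yields the variational inequality in \eqref{eqn:opti}.

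The reverse direction is immediate: if a triplet $(y,u,p)$ satisfies \eqref{eqn:opti}, then the computation above run backwards shows $j'(u)\cdot(v-u)\ge 0$ for all $v\in\mathcal{U}^{ad}$, and the strict convexity of $\mathcal{J}$ established in Section~\ref{sec:model} identifies $u$ as the unique optimizer. The main obstacle, as noted, is the handling of the $\|\mathrm{std}(y)\|_{\mathcal{W}}^{2}$ derivative; once the centering trick eliminates the cross term, the identification with the adjoint equation is standard, and the restriction to $\mathbb{E}[p]$ in the gradient is a direct consequence of $u$ being modeled deterministically.
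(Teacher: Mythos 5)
Your proposal is correct and follows essentially the same route as the paper: compute the directional derivative of the cost, introduce the adjoint $\mathcal{S}^*p(u)=y(u)-y^d+\gamma\bigl(y(u)-\mathbb{E}[y(u)]\bigr)$, and use the duality $(\mathcal{S}^*p,z)=(p,\mathcal{S}z)=(p,h)$ together with the determinism of $h$ to obtain $\bigl(\mathbb{E}[p(u)]+\mu u,\,v-u\bigr)\ge 0$. The only cosmetic difference is that the paper expands the variance as $\mathbb{E}[y^2]-(\mathbb{E}[y])^2$ and differentiates the pieces separately, whereas you keep the centered form and use $\mathbb{E}[y-\mathbb{E}[y]]=0$ to drop the cross term; both give the same adjoint right-hand side.
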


\begin{proof}
	Rewrite the objective functional $\mathcal{J}$ as
	\begin{eqnarray*}
		\mathcal{J}(u) & = &
		\underbrace{\frac{1}{2} \mathbb{E} \left[ \int_{\mathcal{D}} \left( y(u) -y^d\right)^2 \, d\boldsymbol{x} \right]}_{\mathcal{J}_1(u)} + \underbrace{\frac{\gamma}{2}  \mathbb{E} \left[ \int_{\mathcal{D}} y(u)^2 \, d\boldsymbol{x} \right]}_{\mathcal{J}_2(u)}  \\
		&& - \underbrace{\frac{\gamma}{2}  \int_{\mathcal{D}} \left(\mathbb{E}\left[y(u)\right]\right)^2 \, d\boldsymbol{x} }_{\mathcal{J}_3(u)} + \underbrace{\frac{\mu}{2}  \int_{\mathcal{D}} u^2 \, d\boldsymbol{x}}_{\mathcal{J}_4(u)}.
	\end{eqnarray*}
	By the definition of directional derivative, we obtain that
	\begin{align}\label{eqn:direcJ}
		&	\mathcal{J}'(u) \cdot (v-u ) \nonumber \\
		& \quad  =  \mathbb{E} \left[ \int_{\mathcal{D}} \big( y(u) -y^d \big) y'(u)\cdot (v-u) \, d\boldsymbol{x} \right] + \gamma \mathbb{E} \left[ \int_{\mathcal{D}}y(u)y'(u)\cdot (v-u) \, d\boldsymbol{x} \right] \nonumber \\
		&\qquad -   \gamma  \int_{\mathcal{D}}  \mathbb{E}[y(u)]  y'(u)\cdot (v-u) \, d\boldsymbol{x}  +  \mu  \int_{\mathcal{D}} u \cdot (v-u) \, d\boldsymbol{x}.
	\end{align}
	By well--posedness of the state equation \eqref{eqn:moperator} followed from the Lax--Milgram lemma, one can easily show that the operator $\mathcal{S}$ is invertible so that, by taking directional derivative, one  gets
	\begin{eqnarray*} \label{eqn:ydirec}
		y'(u) \cdot (v-u) =  \mathcal{S}^{-1}(v-u) =  y(v) -y(u).
	\end{eqnarray*}
	Thus,   \eqref{eqn:direcJ} gives us
	\begin{equation}\label{eqn:direcShort}
		\mathcal{J}'(u) \cdot (v-u )  =  \Psi(\gamma) + \mu  \int_{\mathcal{D}} u \cdot (v-u) \, d\boldsymbol{x},
	\end{equation}
	where
	\begin{align*}
		\Psi(\gamma) & =  (1+\gamma) \mathbb{E} \left[ \int_{\mathcal{D}} y(u) \cdot \big( y(v) -y(u) \big)\, d\boldsymbol{x} \right] - \gamma  \int_{\mathcal{D}}  \mathbb{E}[y(u)] \cdot \big( y(v) -y(u) \big) \, d\boldsymbol{x}  \\
		& \quad  -  \mathbb{E} \left[ \int_{\mathcal{D}}  y^d \cdot \big( y(v) -y(u) \big) \, d\boldsymbol{x} \right].
	\end{align*}
	To guarantee  the existence and uniqueness of the solution from Lion's Lemma \cite[Theorem 1.3]{JLLions_1971}, we need the following requirement
	\begin{equation}\label{eqn:lione}
		\mathcal{J}'(u) \cdot (v-u ) \geq 0.
	\end{equation}
	Next, we introduce  the adjoint state $p(u) \in \mathcal{Y}$ by
	\begin{equation}\label{eqn:vadjo}
		\mathcal{S}^* \big(p(u)\big)  =  y(u) - y^d + \gamma \big( y(u) - \mathbb{E}[y(u)] \big).
	\end{equation}
	Multiplying both sides of \eqref{eqn:vadjo} by $\big(y(v) - y(u)\big)$, integrating over $\mathcal{D}$, and taking  the expectation of the resulting system,  we obtain
	\begin{eqnarray}\label{eqn:vadjo2}
		\mathbb{E} \left[ \int_{\mathcal{D}} \mathcal{S}^* \big(p(u)\big) \cdot \big( y(v) - y(u) \big) \, d\boldsymbol{x} \right]
		& = & \mathbb{E} \left[ \int_{\mathcal{D}} p(u) \cdot \big( \mathcal{S}\big(y(v)) - \mathcal{S}\big(y(u)) \big) \, d\boldsymbol{x} \right] \nonumber \\
		& = & \mathbb{E} \left[ \int_{\mathcal{D}} p(u) \cdot \big(v - u  \big) \, d\boldsymbol{x} \right] \nonumber  \\
		& = & \Psi(\gamma).
	\end{eqnarray}
	Inserting \eqref{eqn:vadjo2} into \eqref{eqn:direcShort} and combining with \eqref{eqn:lione} give us
	\begin{eqnarray}
		\mathcal{J}'(u) \cdot (v-u)
		& = & \big( \mathbb{E}[p(u)] + \mu u, v-u \big) \geq 0,
	\end{eqnarray}
	which is the desired result.
\end{proof}

In this study, we consider $\mathcal{S}$  as the convection--diffusion operator
\begin{eqnarray}
	\mathcal{S} := 	-\nabla \cdot \big( a(\boldsymbol{x},\omega) \nabla \big) + \mathbf{b}(\boldsymbol{x},\omega)\cdot \nabla,
\end{eqnarray}
which turns the state equation \eqref{eqn:moperator} into
\begin{subequations}\label{eqn:m1}
	\begin{align}
		-\nabla \cdot \big( a(\boldsymbol{x},\omega) \nabla y \big) + \mathbf{b}(\boldsymbol{x},\omega)\cdot \nabla y & =  f + u  \qquad \;  \hbox{in} \; \mathcal{D} \times \Omega, \\
		y & =  y_{DB} \quad  \quad \;\;\; \hbox{on} \; \partial \mathcal{D} \times \Omega,
	\end{align}
\end{subequations}
where  $a:(\mathcal{D} \times \Omega) \rightarrow \mathbb{R}$ and $\mathbf{b}:(\mathcal{D} \times \Omega) \rightarrow \mathbb{R}^2$ are  random diffusivity and velocity coefficients, respectively, which is assumed to have continuous and bounded covariance functions. In addition, we make the following  assumptions on the uncertain coefficients:
\begin{itemize}
	\item[i)]  $\exists \; a_{min}, a_{max}$  such that for almost every  $(\boldsymbol{x},\omega) \in  \mathcal{D} \times \Omega$, $0 < a_{min} \leq  a(\boldsymbol{x},\omega) \leq a_{max} < \infty$. In addition, $a(\boldsymbol{x},\omega)$ has a uniformly bounded and continuous first derivatives.
	\item[ii)] The velocity coefficient $\mathbf{b}$ satisfies $\mathbf{b}(\cdot, \omega) \in \big( L^{\infty}(\overline{\mathcal{D}}) \big)^2$  for  a.e.  $\omega \in \Omega$ and $\nabla \cdot \mathbf{b}(\boldsymbol{x},\omega) =0$.
\end{itemize}
Then, the well--posedness of the state equation \eqref{eqn:m1} can be shown by following  the classical Lax--Milgram lemma; see, e.g., \cite{IBabuska_RTempone_GEZouraris_2004a,GJLord_CEPowell_TShardlow_2014}.

Now, we give the corresponding weak formulation of the optimization problem containing uncertainty \eqref{eqn:objec}--\eqref{eqn:moperator} as follows
\begin{eqnarray}\label{eqn:objec2}
	\min \limits_{u \in \mathcal{U}^{ad}} \; \mathcal{J}(u) &=& \frac{1}{2} \mathbb{E} \left[ \int_{\mathcal{D}} \left( y(u) -y^d\right)^2 \, d\boldsymbol{x} \right] + \frac{\gamma}{2}   \mathbb{E} \left[ \int_{\mathcal{D}} \big(y(u) - \mathbb{E}[y(u)]\big)^2 \, d\boldsymbol{x} \right] \nonumber  \\
	&&+ \frac{\mu}{2}  \int_{\mathcal{D}} u^2 \, d\boldsymbol{x}
\end{eqnarray}
governed by
\begin{eqnarray}\label{eqn:state2}
	a[y,\upsilon] + b[u,\upsilon]= [f , \upsilon], \qquad \upsilon \in \mathcal{Y},
\end{eqnarray}
where
\begin{subequations}
	\begin{eqnarray*}
		a[y,\upsilon] &=& \mathbb{E} \left[  \int_{\mathcal{D}} \big( a(\boldsymbol{x},\omega) \nabla y \cdot \nabla \upsilon  + \mathbf{b}(\boldsymbol{x},\omega)\cdot \nabla y  \, \upsilon \big)\, d\boldsymbol{x}  \right], \quad \, \forall y, \upsilon \in \mathcal{Y}, \\
		b[u,\upsilon]  &=& -\mathbb{E} \left[  \int_{\mathcal{D}} u  \upsilon \, d\boldsymbol{x} \right] \quad \hbox{and} \quad [f , \upsilon]= \mathbb{E} \left[  \int_{\mathcal{D}} f  \upsilon \, d\boldsymbol{x} \right], \quad \forall u \in \mathcal{U}, \upsilon \in \mathcal{Y}.
	\end{eqnarray*}
\end{subequations}
Moreover, the optimality system in \eqref{eqn:opti} can be stated in the weak formulation as follows:
\begin{subequations}\label{eqn:opti1}
	\begin{align}
		& a[y,\upsilon] + b[u,\upsilon] = [f,\upsilon], \qquad \qquad \qquad \qquad \quad   \upsilon \in \mathcal{Y}, \\
		& a[q,p]  = [y  -y^d,q] +  \gamma \big[y - \mathbb{E}[y] ,q\big], \qquad \quad  \;\;\; q  \in \mathcal{Y},\\
		& \big( \mathbb{E}[p] + \mu u, w-u \big) \geq  0,  \qquad  \quad \quad \qquad \qquad \quad  w \in \mathcal{U}^{ad},
	\end{align}
\end{subequations}	
where the adjoint $ p \in \mathcal{Y} $ solves the following convection diffusion equation containing uncertain inputs
\begin{subequations}\label{eqn:adjoint}
	\begin{align}
		-\nabla \cdot \big( a(\boldsymbol{x},\omega) \nabla p \big) - \mathbf{b}(\boldsymbol{x},\omega)\cdot \nabla p
		& =  (y  -y^d) +  \gamma \big(y - \mathbb{E}[y] \big) \; \hbox{in} \; \mathcal{D} \times \Omega, \\
		p & =  0 \qquad \qquad \hspace{16mm} \; \hbox{on} \; \partial \mathcal{D} \times \Omega.
	\end{align}
\end{subequations}

In the following, we introduce the techniques, that is, Karhunen--L\`{o}eve (KL) expansion, stochastic Galerkin, and discontinuous Galerkin  method, to recast the infinite--dimensional model problem \eqref{eqn:objec2}--\eqref{eqn:state2} into the finite dimensional.


\section{Finite dimensional representation}\label{sec:finite}

\subsection{Finite representation of stochastic fields}

To solve \eqref{eqn:objec2}--\eqref{eqn:state2} numerically, it is needed to reduce the stochastic process into finite mutually uncorrelated random variables. Therefore, the coefficients  $a(\boldsymbol{x},\omega)$ and  $\mathbf{b}(\boldsymbol{x},\omega)$ are approximated by finite uncorrelated components $\{\xi_i(\omega)\}_{i=1}^{N \in \mathbb{N}}$, called as finite dimensional noise \cite{IBabuska_RTempone_GEZouraris_2004a,NWiener_1938a}.  Introducing the probability density functions  of $\{\xi_i(\omega)\}_{i=1}^{N \in \mathbb{N}}$ denoted by $\rho_i \, : \, \Gamma_i \rightarrow [0,1]$ with  a bounded interval $\Gamma_i = \xi_i(\Omega) \in \mathbb{R}$, the probability space $(\Omega, \mathcal{F}, \mathbb{P})$ is replaced by $(\Gamma, \mathcal{B}(\Gamma), \rho(\xi)d\xi)$, where $\Gamma$ represents the support of such probability density,  $\mathcal{B}(\Gamma)$ is a Borel $\sigma$--algebra, and $\rho(\xi)d\xi$ corresponds to the distribution measure of $\xi$. Moreover, $\rho(\xi)$ denotes the joint probability density function. Hence, we can state the tensor--product space $H^k(\mathcal{D}) \otimes L^2(\Gamma)$ endowed with the following norm
\begin{eqnarray}\label{defn:norm}
	\|\eta\|_{H^k(\mathcal{D}) \otimes L^2(\Gamma)} := \left( \int_{\Gamma} \|\eta(\cdot,\xi)\|^2_{H^k(\mathcal{D})} \rho(\xi) \, d \xi\right)^{1/2} < \infty.
\end{eqnarray}

\noindent Following the well--known KL expansion \cite{KKarhunen_1947,MLoeve_1946}, a random field $\eta$ having  a continuous covariance function as follows
\begin{equation}\label{eqn:cov}
	\mathbb{C}_{\eta}(\boldsymbol{x},\boldsymbol{y}) :=  \int_{\Omega}  (\eta(\boldsymbol{x},\cdot) - \overline{\eta}(\boldsymbol{x})) (\eta(\boldsymbol{y},\cdot) - \overline{\eta}(\boldsymbol{y}))  \, d \mathbb{P}(\omega)
\end{equation}
admits a proper orthogonal decomposition
\begin{equation}\label{eqn:kl}
	\eta(\boldsymbol{x},\omega) = \overline{\eta}(\boldsymbol{x}) +  \kappa \sum \limits_{k=1}^\infty \sqrt{\lambda_k}\phi_k(\boldsymbol{x})\xi_k(\omega),
\end{equation}
where $\overline{\eta}(\boldsymbol{x})$ and $\kappa$ are  mean and  standard deviation of $\eta$, respectively, and $\xi:=\{\xi_1,\xi_2,\ldots\}$ are uncorrelated random  variables. The pair $\{\lambda_k,\phi_k\}$ is a set of  the eigenvalues and eigenfunctions of the corresponding covariance operator $\mathcal{C}_{\eta}$. Then, we approximate $\eta(\boldsymbol{x},\omega)$ by truncating its KL expansion of the form
\begin{equation}\label{eqn:kltrun}
	\eta(\boldsymbol{x},\omega) \approx \eta_N(\boldsymbol{x},\omega) := \overline{\eta}(\boldsymbol{x}) + \kappa \sum \limits_{k=1}^N \sqrt{\lambda_k}\phi_k(\boldsymbol{x})\xi_k(\omega).
\end{equation}
The truncated KL expansion \eqref{eqn:kltrun} is a finite representation of the random field $\eta(\boldsymbol{x},\omega)$ in the sense that the mean-square error of approximation is minimized; see, e.g., \cite{IBabuska_PChatzipantelidis_2002a}. To guarantee the positivity of the truncated KL expansion  \eqref{eqn:kltrun} for the diffusivity coefficient $a(\boldsymbol{x},\omega)$, it is also assumed that the mean of random coefficient exhibits a stronger dominance; see, e.g., \cite{CEPowell_HCElman_2009}.

By the assumption on the finite dimensional and Doob--Dynkin lemma \cite{BOksendal_2003}, the solution of \eqref{eqn:m1}  can be expressed  in the finite dimensional stochastic space, that means, $y(\boldsymbol{x},\xi(\omega)) \in \mathcal{Y}_{\rho} =  L^2(H^1_0(\mathcal{D});\Gamma)$ with $\xi = \big(\xi_1(\omega),\ldots,\xi_N(\omega)\big)$.  Then,  setting $\widetilde{\mathbb{E}}[y] = \int_{\Gamma} y \, \rho(\xi)d\xi$, the  optimization problem \eqref{eqn:objec2}-\eqref{eqn:state2} becomes
\begin{eqnarray}\label{eqn:objec4}
	\min \limits_{u \in \mathcal{U}^{ad}} \; \mathcal{J}(u) &=& \frac{1}{2} \widetilde{\mathbb{E}} \left[ \int_{\mathcal{D}} \left( y(u) -y^d\right)^2 \, d\boldsymbol{x} \right] + \frac{\gamma}{2}   \widetilde{\mathbb{E}} \left[ \int_{\mathcal{D}} \left(y(u) - \widetilde{\mathbb{E}}[y(u)]\right)^2 \, d\boldsymbol{x} \right] \nonumber  \\
	&&+ \frac{\mu}{2}  \int_{\mathcal{D}} u^2 \, d\boldsymbol{x}
\end{eqnarray}
subject to
\begin{align}\label{eqn:vari_m2}
	a[y,\upsilon]_{\rho} + b[u,\upsilon]_{\rho}& = [f,\upsilon]_{\rho}, \quad \forall \upsilon \in \mathcal{Y}_{\rho},
\end{align}
where
\begin{subequations}
	\begin{align}
		a[y,\upsilon]_{\rho} &=\hspace{-1.5mm} \int_{\Gamma}  \int_{\mathcal{D}} \hspace{-2mm}\big( a(\boldsymbol{x},\xi) \nabla y \cdot \nabla \upsilon  + \mathbf{b}(\boldsymbol{x},\xi)\cdot \nabla y  \, \upsilon \big)\, d\boldsymbol{x} \, \rho(\xi)d\xi, \; \forall y, \upsilon \in \mathcal{Y}_{\rho}, \\
		b[u,\upsilon]_{\rho}  &= -\int_{\Gamma}  \int_{\mathcal{D}} u  v \, d\boldsymbol{x} \, \rho(\xi)d\xi,  \qquad  \forall u \in \mathcal{U}, \;  \upsilon \in \mathcal{Y}_{\rho}, \\
		[f,\upsilon]_{\rho} &= \int_{\Gamma}  \int_{\mathcal{D}} f  \upsilon \, d\boldsymbol{x} \, \rho(\xi)d\xi,  \qquad  \quad  \forall   \upsilon \in \mathcal{Y}_{\rho}.
	\end{align}
\end{subequations}

Then,  the optimization problem \eqref{eqn:objec4}-\eqref{eqn:vari_m2}  has a unique solution pair $(y,u) \in \mathcal{Y}_{\rho} \times \mathcal{U}^{ad} $ if and only if there is an adjoint  $p \in \mathcal{Y}_{\rho} $ such that the following optimality system holds for the triplet $(y,u,p)$:
\begin{subequations}\label{eqn:opti2}
	\begin{align}
		&	a[y,\upsilon]_{\rho} + b[u,\upsilon]_{\rho}  = [f,\upsilon]_{\rho}, \qquad \qquad \qquad \qquad \;\; \upsilon \in \mathcal{Y}_{\rho} \label{eqn}, \\
		&	a[q,p]_{\rho}  = [y  -y^d,q]_{\rho} +  \gamma \big[ y - \widetilde{\mathbb{E}}[y] ,q\big]_{\rho}, \qquad \quad \; q \in \mathcal{Y}_{\rho},\\
		&	\big( \widetilde{\mathbb{E}} [ p]  + \mu u,  w-u \big) \geq  0,  \qquad \qquad   \qquad  \qquad \qquad    w \in \mathcal{U}^{ad}.
	\end{align}
\end{subequations}

Next, we present the representation of  stochastic solutions, i.e., $y(\boldsymbol{x},\xi), p(\boldsymbol{x},\xi)$,  by using a polynomial chaos (PC) approximation \cite{DXiu_GEKarniadakis_2002}.


\subsection{Stochastic Galerkin Method}

The state solution $y(\boldsymbol{x},\xi) \in L^2(\Gamma, \mathcal{F},\mathbb{P})$, as well as  the adjoint solution $ p(\boldsymbol{x},\xi)  \in L^2(\Gamma, \mathcal{F},\mathbb{P})$, can be represented by a finite generalized polynomial chaos (PC) approximation as stated in Cameron--Martin theorem \cite{RHCameron_WTMartin_1947a},
\begin{subequations}\label{eq:pcekdvtrun}
	\begin{eqnarray}
		y(\boldsymbol{x},\omega) \approx y_J(\boldsymbol{x},\xi) & = & \sum_{i=0}^{J-1} y_i(\boldsymbol{x}) \psi_i(\xi),\\
		p(\boldsymbol{x},\omega) \approx p_J(\boldsymbol{x},\xi) & = & \sum_{i=0}^{J-1} p_i(\boldsymbol{x}) \psi_i(\xi),
	\end{eqnarray}
\end{subequations}
where $y_i(\boldsymbol{x})$ and $p_i(\boldsymbol{x})$ are the deterministic modes of the expansion and the total number of PC basis is determined by the dimension $ N $  of the random vector $\xi$ and the highest order $ Q $  in the basis set of $ \psi_i $
\begin{equation*}	
	J = 1 + \sum \limits_{s=1}^{Q} \frac{1}{s!} \prod \limits_{j=0}^{s-1} (N + j) = \frac{(N+Q)!}{N!Q!}.
\end{equation*}
By following \cite{OGErnst_EUllmann_2010,CEPowell_HCElman_2009},  we then define the stochastic space as
\begin{equation}
	\mathcal{S}_k  := \hbox{span}\{\psi_i(\xi): \; i=0,1,\ldots,J-1\} \subset L^2(\Gamma).
\end{equation}
For simplicity, we only deal with the state equation since the procedure for the adjoint equation is similar to the state ones. By inserting KL expansions \eqref{eqn:kltrun} of the diffusion $a(\boldsymbol{x},\omega)$ and the convection $\mathbf{b}(\boldsymbol{x},\omega)$ coefficients, and the solution expression \eqref{eq:pcekdvtrun} into  the variational form of the state equation \eqref{eqn:vari_m2} and projecting onto the space of  the PC basis functions, we get a linear system, consisting  of $J$ deterministic convection diffusion equations for $j=0, \ldots, J-1$
\begin{eqnarray}\label{sdg1}
	\sum_{i=0}^{J-1} \big( -\nabla \cdot (a_{ij}\nabla y_i(\boldsymbol{x})) +  \mathbf{b}_{ij} \cdot \nabla y_i(\boldsymbol{x}) \big) = \ang{\psi_j}f(\boldsymbol{x}) + \ang{\psi_j}u(\boldsymbol{x}),
\end{eqnarray}
where
\begin{eqnarray*}
	a_{ij} &=& \overline{a}(\boldsymbol{x}) \ang{ \psi_i^2(\xi)} \delta_{ij} + \kappa_a \sum_{k=1}^N  \sqrt{\lambda_k^{a}} \phi_k^{a}(\boldsymbol{x}) \ang{\xi_k \psi_i(\xi) \psi_j(\xi)}, \\
	\mathbf{b}_{ij} &=& \overline{\mathbf{b}}(\boldsymbol{x}) \ang{ \psi_i^2(\xi)} \delta_{ij} + \kappa_{\mathbf{b}}\sum_{k=1}^N  \sqrt{\lambda_k^{\mathbf{b}}} \phi_k^{\mathbf{b}}(\boldsymbol{x}) \ang{\xi_k \psi_i(\xi) \psi_j(\xi)}.
\end{eqnarray*}
Here, we apply  the same distribution for both diffusion and convection random coefficients in order to reduce  the computational effort. However, it can be  possible to use different distributions; see, e.g, \cite{ABarth_AStein_2022} for more discussion. We also note that the quantity of interest is the statistical moments of the solution $y(\boldsymbol{x},\omega)$  rather than  the solution $y(\boldsymbol{x},\omega)$.

\subsection{Symmetric interior penalty Galerkin (SIPG) method}

We briefly recall the SIPG discretization following the studies in  \cite{HYucel_PBenner_2015a,PCiloglu_HYucel_2022}. A shape-regular simplicial triangulations of $\mathcal{D}$ is denoted by
$\{ \mathcal{T}_h\}_h$ with $\overline{\mathcal{D}} = \bigcup_{K \in \mathcal{T}_h} \overline{K}$. The set of all edges $\mathcal{E}_h$ consists of  the interior edges $\mathcal{E}^0_h$  and boundary edges $\mathcal{E}^{\partial}_h$ such that $\mathcal{E}_h=\mathcal{E}^{0}_h \cup\mathcal{E}^{\partial}_h$. For a fixed realization $\omega$  and the unit outward normal $\mathbf{n}_{K}$ to $\partial K$, we decompose the boundary edges  of an element $K$ into the inflow $\partial K^-$
\begin{eqnarray*}
	\partial K^- &=& \{ \boldsymbol{x} \in \partial K \, : \;\mathbf{b}(\boldsymbol{x},\omega) \cdot \mathbf{n}_{K}(\boldsymbol{x}) <0  \}
\end{eqnarray*}
and  outflow  $\partial K^+$  parts such that $\partial K^+ = \partial K \backslash \partial K^-$. Jump and average operators of $y$ and $\nabla y$   for a common edge $E = K  \cap K^e $ are given, respectively, by
\begin{subequations}
	\begin{align}
		\jump{y}&=y\arrowvert_E\mathbf{n}_{K}+y^e\arrowvert_E\mathbf{n}_{K^e}, \quad
		\jump{\nabla y}=\nabla y\arrowvert_E \cdot \mathbf{n}_{K}+\nabla y^e\arrowvert_E \cdot \mathbf{n}_{K^e}, \\
		\average{y}&=\frac{1}{2}\big( y\arrowvert_E+y^e\arrowvert_E \big), \quad
		\average{\nabla y}=\frac{1}{2}\big(\nabla y\arrowvert_E+\nabla y^e\arrowvert_E \big),
	\end{align}
\end{subequations}
where $y\arrowvert_E$ (or $\nabla y\arrowvert_E$)     and  $y^e\arrowvert_E$ (or $\nabla y^e\arrowvert_E$) are traces from inside $K$  and $K^e$, respectively. For a boundary edge $E \in K \cap \partial \mathcal{D}$, the operators are defined by  $\average{\nabla y}=\nabla y$ and $\jump{y}=y\mathbf{n}$, where $\mathbf{n}$ denotes the unit outward normal to  $\partial \mathcal{D}$. Further, we  set $h=\max \limits_{K \in \mathcal{T}_h} h_{K}$, where $h_{K}$ is the diameter of an element $K$.

Defining  the  discrete  space as follows
\begin{align}\label{tspace}
	V_h &= \{y \in L^2(\mathcal{D})\, : \; y \mid_{K}\in \mathbb{P}(K) \quad \forall K \in \mathcal{T}_h\},
\end{align}
where $\mathbb{P}(K)$ is the set of linear polynomials and following the standard  discontinuous Galerkin structure discussed in \cite{DNArnold_FBrezzi_BCockburn_LDMarini_2002a,BRiviere_2008a},  the (bi)--linear forms for a finite dimensional vector $\xi$ can be stated as follow:
\begin{align*}
	a_h(y,v,\xi)&= \sum \limits_{K \in \mathcal{T}_h} \int \limits_{K} a(.,\xi) \nabla y \cdot  \nabla v \, d\boldsymbol{x}
	-  \sum \limits_{ E \in \mathcal{E}^{0}_h \cup \mathcal{E}_h^{\partial}} \int \limits_E \average{a(.,\xi)\nabla y }  \jump{v} \, ds\\
	& \quad - \sum \limits_{ E \in \mathcal{E}^{0}_h \cup \mathcal{E}_h^{\partial}} \int \limits_E \average{a(.,\xi) \nabla v }  \jump{y} \; ds
	+ \sum \limits_{ E \in \mathcal{E}^{0}_h \cup \mathcal{E}_h^{\partial}} \frac{\sigma }{h_E} \int \limits_E \jump{y} \cdot \jump{v} \, ds   \\
	& \quad + \sum \limits_{K \in \mathcal{T}_h} \int \limits_{K} \mathbf {b}(.,\xi) \cdot \nabla y v \; d\boldsymbol{x} \nonumber  +\hspace{-2.5mm} \sum \limits_{K \in \mathcal{T}_h}\; \int \limits_{\partial K^{-} \backslash \partial \mathcal{D}} \hspace{-4mm}\mathbf {b}(.,\xi) \cdot \mathbf{n}_E (y^e-y)v \, ds\\
	& \quad - \sum \limits_{K \in \mathcal{T}_h} \; \int \limits_{\partial K^{-} \cap \partial \mathcal{D}^{-}} \mathbf {b}(.,\xi) \cdot \mathbf{n}_E y v  \, ds, \\
	b_h(u,v,\xi) &= -\sum \limits_{K \in \mathcal{T}_h} \int \limits_{K} u v \, d\boldsymbol{x},	\\
	l_h(f,v,\xi)&=\sum \limits_{K \in \mathcal{T}_h} \int \limits_{K} f v \, d\boldsymbol{x}
	+ \sum \limits_{E \in \mathcal{E}_h^{\partial}} \int \limits_E   \left( \frac{\sigma}{h_E} y_{DB} \, \jump{v}
	-  y_{DB} \, \average{a(.,\xi)\nabla v} \right) \; ds \\
	& \quad - \sum \limits_{K \in \mathcal{T}_h}  \int \limits_{\partial K^{-} \cap \partial \mathcal{D}^{-}} \mathbf{b}(.,\xi) \cdot \mathbf{n}_E  y_{DB} \, v  \, ds,
\end{align*}
where the parameter $\sigma \in \mathbb{R}_0^+$, called as the penalty parameter, should be sufficiently large to ensure the stability of the SIPG scheme; independent of the mesh size $h$. However, as discussed in \cite[Sec.~2.7.1]{BRiviere_2008a}, it depends on the degree of polynomials used in the DG discretization and the position of the edge $E$. In our numerical experiments, we choose $\sigma$ as $\sigma =6$ on the interior edges  $\mathcal{E}^0_h$ and 12 on the boundary edges $\mathcal{E}^{\partial}_h$.

\noindent Then, the (bi)--linear forms of the stochastic discontinuous Galerkin (SDG) for the state equation correspond to
\[
   a_{\xi}[y,v] + b_{\xi}[u,v] = [f,v]_{\xi},
\]
where
\begin{align*}
	a_{\xi}[y,v] &=\int_{\Gamma}  a_h(y,v,\xi)\rho(\xi) \, d\xi, \quad 	b_{\xi}[u,v]=\int_{\Gamma}  b_h(u,v,\xi)\rho(\xi) \, d\xi, \\
	[f,v]_{\xi} &= \int_{\Gamma}  l_h(f,v,\xi)\rho(\xi) \, d\xi.
\end{align*}
Now, we can state the discrete optimal control problem
\begin{eqnarray}\label{eqn:objec_disc}
	\min \limits_{u_h \in \mathcal{U}^{ad}_h} \; \mathcal{J}(u_h) &=& \frac{1}{2} \widetilde{\mathbb{E}} \left[ \int_{\mathcal{D}} \left( y_h -y^d \right)^2 \, d\boldsymbol{x} \right] + \frac{\gamma}{2}   \widetilde{\mathbb{E}} \left[ \int_{\mathcal{D}} \left(y_h - \widetilde{\mathbb{E}}[y_h]\right)^2 \, d\boldsymbol{x} \right] \nonumber  \\
	&&+ \frac{\mu}{2}  \int_{\mathcal{D}} u_h^2 \, d\boldsymbol{x}
\end{eqnarray}
governed by
\begin{equation}\label{eqn:vari_disc}
	a_{\xi}[y_h,\upsilon_h] + b_{\xi}[u_h,\upsilon_h] = [f,\upsilon_h]_{\xi}, \quad \forall \upsilon_h \in \mathcal{Y}_h= V_h \otimes \mathcal{S}_k,
\end{equation}
where the discrete admissible set \eqref{defn:add} is defined by
\begin{equation}\label{defn:add_dis}
	\mathcal{U}_h^{ad} :=  \{ u_h \in \mathcal{U}_h \; : \; u_a \leq  u_h(\boldsymbol{x}) \leq u_b, \; \hbox{a.e.} \;  \boldsymbol{x} \in K  \subset \mathcal{T}_h \},
\end{equation}
with $\mathcal{U}_h^{ad} =\mathcal{U}_h \cap \mathcal{U}^{ad}$ and $\mathcal{U}_h=\mathcal{V}_h$. Analogously, a  pair $(y_h,u_h) \in \mathcal{Y}_h \times \mathcal{U}_h^{ad}$ is a unique solution of the control problem \eqref{eqn:objec_disc}-\eqref{eqn:vari_disc} if and only if  an adjoint $p_h \in \mathcal{Y}_h$ exists  such that the  optimality system holds for $(y_h,u_h,p_h) \in \mathcal{Y}_h \times \mathcal{U}_h^{ad} \times \mathcal{Y}_h $
\begin{subequations}\label{eqn:opti_disc}
	\begin{align}
		&a_{\xi}[y_h,\upsilon_h] + b_{\xi}[u_h,\upsilon_h] = [f,\upsilon_h]_{\xi}, \qquad \qquad \qquad \qquad \quad \; \upsilon_h \in \mathcal{Y}_{h} \label{eqn:Adisc},\\
		&a_{\xi}[q_h,p_h]  = [y_h -y^d,q_h]_{\xi} +  \gamma \big[ y_h - \widetilde{\mathbb{E}}[y_h] ,q_h \big]_{\xi}, \quad \qquad \, q_h \in \mathcal{Y}_{h},\label{eqn:Adjdisc}\\
		& [ p_h + \mu u_h, w_h-u_h ]_{\xi} \geq  0,  \qquad  \qquad\qquad\qquad\qquad\qquad   \;  w_h \in \mathcal{U}_h^{ad}, \label{eqn:opti_disc3}
	\end{align}
\end{subequations}	
where $[ p_h + \mu u_h, w_h-u_h ]_{\xi} = \big( \widetilde{\mathbb{E}}[p_h] + \mu u_h, w_h-u_h \big)$ since the discrete solution $u_h$ is deterministic.

Further, by denoting
\begin{eqnarray}\label{eq:direc_disc}
	\mathcal{J}'_h(u_h) \cdot w_h = [p_h+\mu u_h,w_h]_{\xi}, \quad \forall w_h \in \mathcal{U}_h^{ad},
\end{eqnarray}
one can easily obtain the following expression for the discrete directional derivative of functional $\mathcal{J}_h(u_h)$:
\begin{eqnarray}\label{ineq:direc_disc}
	\mathcal{J}'_h(u_h) \cdot (w_h-u_h) \geq 0, \quad \forall w_h \in \mathcal{U}_h^{ad}.
\end{eqnarray}



\section{Error analysis}\label{sec:apriori}

We provide an a priori error analysis of  the optimization problem  \eqref{eqn:objec2}-\eqref{eqn:state2}, discretized by the  stochastic discontinuous Galerkin method. Before deriving the corresponding estimates, we define the associated energy norm on $\mathcal{D} \times \Gamma$  as
\begin{eqnarray} \label{energynorm}
	\lVert y\rVert _{\xi}= \Bigg(\int_{\Gamma}  \lVert y(.,\xi)\rVert_{e}^2 \; \rho(\xi)\, d\xi \Bigg)^{\frac{1}{2}},
\end{eqnarray}
where $\lVert y(.,\xi)\rVert_{e}$ is the energy norm on $\mathcal{D}$, given as
\begin{eqnarray*}
	\lVert y(.,\xi)\rVert_{e}&=&\Bigg( \sum \limits_{K \in \mathcal{T}_h} \int \limits_{K} a(.,\xi) (\nabla y)^2 \, d\boldsymbol{x}  + \sum \limits_{ E \in \mathcal{E}^{0}_h \cup \mathcal{E}_h^{\partial}} \frac{\sigma}{h_E} \int \limits_E \jump{y}^2 \, ds  \\
	&&	+\frac{1}{2}\sum \limits_{ E \in \mathcal{E}_h^{\partial}}\int \limits_E \mathbf {b}(.,\xi)\cdot \mathbf{n}_E y^2 ds
	+ \frac{1}{2}\sum \limits_{ E \in \mathcal{E}^{0}_h }\int \limits_E \mathbf {b}(.,\xi)\cdot \mathbf{n}_E(y^e-y)^2 \, ds\Bigg)^{\frac{1}{2}}.
\end{eqnarray*}
By the standard arguments as done in deterministic case, one can easily show the coercivity and continuity of $a_{\xi}(\cdot,\cdot)$ for $y, v \in \mathcal{Y}_h$
\begin{equation}\label{coer_cont}
	a_{\xi}[y,y]  \geq  c_{cv} \, \lVert y \rVert _{\xi}^2, \qquad
	a_{\xi}[y,v]  \leq  c_{ct} \, \lVert y\rVert _{\xi}\lVert v\rVert _{\xi},
\end{equation}
where the coercivity constant $c_{cv}$ depends on $a_{min}$, whereas the continuity constant $c_{ct}$ depends on $a_{max}$.

Next, we state the estimates on the finite dimensional probability domain $\Gamma$ and the physical domain $K\in \mathcal{T}_h$. Let a partition of the support of probability density in finite dimensional space, i.e.,  $\Gamma= \prod \limits_{n=1}^{N} \Gamma_n$, consists of  disjoint $\mathbf{R}^N$--boxes, $\gamma = \prod \limits_{n=1}^{N} (r_n^{\gamma},s_n^{\gamma})$, with $(r_n^{\gamma},s_n^{\gamma}) \subset \Gamma_n$ for $n=1,\ldots,N$ so that the mesh size $k_n$ becomes $k_n = \max \limits_{\gamma} \lvert s_n^{\gamma} - r_n^{\gamma}\rvert$, $n=1 \ldots N$. For the multi--index $q = (q_1,\ldots,q_N)$, the (discontinuous) finite element approximation space having  at most $q_n$ degree on each direction $\xi_n$ is denoted by $\mathcal{S}_k^q \subset L^2(\Gamma)$. Then, for $v\in H^{q+1}(\Gamma), \varphi \in \mathcal{S}_k^q$, we have, see \cite{IBabuska_RTempone_GEZouraris_2004a},
\begin{eqnarray}\label{estr}
	\min \limits_{\varphi \in \mathcal{S}_k^q} \|v-\varphi\|_{L^2(\Gamma)} \leq   \sum_{n=1}^{N}\bigg(\frac{k_n}{2}\bigg)^{q_n+1}\dfrac{\lVert \partial^{q_n +1}_{\xi_n}v \rVert_{L^2(\Gamma)} }{(q_n+1)!}.
\end{eqnarray}
For $v\in H^2(K)$ and $\widetilde{v}  \in \mathbb{P}(K)$, where $K\in \mathcal{T}_h$, the following discontinuous Galerkin approximation \cite[Theorem 2.6]{BRiviere_2008a} also holds
\begin{equation} \label{approxfm}
	\lVert v- \widetilde{v} \rVert_{H^q(K)} \leq  C\,h^{2-q} \lvert v \rvert_{H^2(K)}, \qquad 0\leq q \leq 2,
\end{equation}
where the constant $C$ is not depending  on $v$ and $h$.

\indent Further, we define the following projection operators, which are needed in the rest of the paper:
\begin{itemize}
	\item $L^2$--projection operators  $\Pi_n: L^2(\Gamma) \rightarrow \mathcal{S}_k^q$ and  $\Pi_h: L^2(\mathcal{D}) \rightarrow V_h \cap L^2(\mathcal{D})$ are  given by
	\begin{subequations}
		\begin{eqnarray}
			(\Pi_n(\xi)-\xi,\zeta)_{L^2(\Gamma)}       &=&0, \qquad  \forall \zeta \in \mathcal{S}_k^q, \qquad \forall\xi \in L^2(\Gamma), \label{eq:l2projc}\\
			(\Pi_h( \nu)- \nu,\chi)_{L^2(\mathcal{D})} &=&0, \qquad  \forall \chi\in V_h, \qquad \forall \nu \in L^2(\mathcal{D}). \label{eq:l2projc_phy}
		\end{eqnarray}
	\end{subequations}
	with the following estimate
	\begin{equation}\label{lprojest}
		\|\nu - \Pi_h( \nu)\|_{L^2(L^2(\mathcal{D};\Gamma))} \leq C  h \|\nu\|_{L^2(H^1(\mathcal{D};\Gamma))}.
	\end{equation}
    In addition, taking $\zeta=\Pi_n(\xi)$ and $\chi = \Pi_h( \nu)$ in \eqref{eq:l2projc} and \eqref{eq:l2projc_phy}, respectively, it holds that
    \begin{subequations}
    	\begin{eqnarray}
    		\|\Pi_n(\xi)\|_{L^2(\Gamma)}       &\leq& C	\|\xi\|_{L^2(\Gamma)}, \qquad  \forall \xi \in L^2(\Gamma),  \label{eq:l2bound}\\
    		\| \Pi_h( \nu)\|_{L^2(\mathcal{D})} &\leq& C\|\nu\|_{L^2(\mathcal{D})}, \qquad  \forall \nu \in L^2(\mathcal{D}). \label{eq:l2bound_phy}
    	\end{eqnarray}
    \end{subequations} 
	\item $H^1$--projection operator $\mathcal{R}_h:H^1(\mathcal{D}) \rightarrow V_h \cap H^1(\mathcal{D})$  is stated by
	\begin{subequations} \label{Hproj_main}
		\begin{align}
			(\mathcal{R}_h( \upsilon)- \upsilon,\vartheta)_{L^2(\mathcal{D})}&=0, \qquad \quad \forall \vartheta \in V_h, \qquad \forall \upsilon \in H^1(\mathcal{D}), \label{eq:Hprojc} \\
			(\nabla(\mathcal{R}_h( \upsilon)- \upsilon),\nabla \vartheta)_{L^2(\mathcal{D})}&=0,  \qquad \quad  \forall \vartheta \in V_h, \qquad \forall \upsilon \in H^1(\mathcal{D}). \label{eq:Hproj}
		\end{align}
	\end{subequations}
\end{itemize}
With the help of the  $H^1$--projection operator in \eqref{eq:Hprojc}, the  Cauchy--Schwarz inequality, the $L^2$--projection operator in \eqref{eq:l2projc}, and the approximation in \eqref{estr},  we obtain the  approximation property (\cite[Theorem 3.2]{PCiloglu_HYucel_2022}): for all $v \in L^2(H^2(\mathcal{D});\Gamma)\cap H^{q+1}(H^1(\mathcal{D});\Gamma)$ and $\widetilde{v} \in V_h \times \mathcal{S}_k^q$
\begin{eqnarray} \label{ineq:main_appr}
	\lVert v- \widetilde{v} \rVert_{L^2(H^1(\mathcal{D});\Gamma)}
	&\leq& Ch \lVert v \rVert_{L^2(H^2(\mathcal{D});\Gamma)} \nonumber \\
	&& + \sum_{n=1}^{N}\bigg(\frac{k_n}{2}\bigg)^{q_n+1}\dfrac{\lVert \partial^{q_n+1}_{\xi_n}v \rVert_{L^2(H^1(\mathcal{D});\Gamma)} }{(q_n+1)!},
\end{eqnarray}
where the constant $C$ does not depend on $v$, $h$, and $k_n$.

To recognize error contributions emerging from the spatial domain $\mathcal{D}$ and the probability domain $\Gamma$, separately, a projection operator $ \mathcal{P}_{hn} $  mapping onto the tensor product space $\mathcal{Y}_{h}$ is given by
\begin{equation}\label{2proj}
	\mathcal{P}_{hn} \Upsilon = \Pi_h \Pi_n \Upsilon = \Pi_n \Pi_h \Upsilon, \quad \forall \Upsilon \in L^2(L^2(\mathcal{D});\Gamma)
\end{equation}
and the  decomposition
\begin{equation}
	\Upsilon - \mathcal{P}_{hn} \Upsilon =(\Upsilon - \Pi_h\Upsilon) + \Pi_h(I-\Pi_n)\Upsilon, \quad \forall \Upsilon \in L^2(L^2(\mathcal{D});\Gamma).
\end{equation}
Then, it follows from \eqref{eq:l2bound}, \eqref{eq:l2bound_phy}, and \eqref{2proj} that
\begin{eqnarray}\label{ineq:boundproc}
 \|\mathcal{P}_{hn} \Upsilon\|_{L^2(L^2(\mathcal{D});\Gamma)} \leq C \| \Upsilon\|_{L^2(L^2(\mathcal{D});\Gamma)}, \quad \forall \Upsilon \in L^2(L^2(\mathcal{D});\Gamma).
\end{eqnarray}

\noindent Before the derivation of a priori error estimate, we state the following auxiliary problem
\begin{eqnarray}\label{eq:direc_semi}
	\mathcal{J}'_h(u) \cdot (w-u) = [p_h(u)+\mu u,w-u]_{\xi}\geq 0, \quad \forall w \in \mathcal{U}^{ad},
\end{eqnarray}
where $p_h(u) \in \mathcal{Y}_{h} $ solves  the following auxiliary system:
\begin{subequations}\label{eqn:opti_semi}
	\begin{align}
		& a_{\xi}[y_h(u),v_h] + b_{\xi}[u,v_h] = [f,v_h]_{\xi}, \qquad \qquad \qquad \qquad \qquad\;\;\; v_h \in \mathcal{Y}_{h} \label{eqn:Asemi}, \\
		& a_{\xi}[q_h,p_h(u)]  = [y_h(u) -y^d,q_h]_{\xi} +  \gamma \big[ y_h(u) - \widetilde{\mathbb{E}}[y_h(u)], q_h \big]_{\xi}, \,   q_h \in \mathcal{Y}_{h}.\label{eqn:Adjsemi}
	\end{align}
\end{subequations}
It is also noted  that we prefer to use $\|u\|_{L^2(L^2(\mathcal{D});\Gamma)}$  in the derivation of error estimates instead of $\|u\|_{L^2(\mathcal{D})}$ for better readability in terms of notation.
\begin{lemma}\label{lemma:direc}
	With the definition in \eqref{eq:direc_semi}, the following estimate holds:
	\begin{equation}\label{eqn:lemma_direc}
		\left( \mathcal{J}'_h(w) - \mathcal{J}'_h(u)  \right) \cdot (w-u)  \geq \mu \lVert w-u \rVert^2 _{L^2(L^2(\mathcal{D});\Gamma)}.
	\end{equation}
\end{lemma}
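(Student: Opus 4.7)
The plan is to expand the left-hand side using the definition \eqref{eq:direc_semi}, split off the $\mu$-term (which directly yields the desired bound), and show that the remaining cross term coming from the adjoint states is non-negative by using the state and adjoint equations to relate it to a sum of squared norms.

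First I would write
\begin{equation*}
\bigl( \mathcal{J}'_h(w) - \mathcal{J}'_h(u) \bigr) \cdot (w-u) = [p_h(w) - p_h(u), w-u]_{\xi} + \mu [w-u, w-u]_{\xi}.
\end{equation*}
Since $w-u$ is deterministic, the last term equals $\mu \lVert w-u\rVert^2_{L^2(L^2(\mathcal{D});\Gamma)}$, so it remains to prove that $[p_h(w) - p_h(u), w-u]_{\xi} \geq 0$. Set $\delta y = y_h(w) - y_h(u)$ and $\delta p = p_h(w) - p_h(u)$; by linearity of \eqref{eqn:Asemi} and \eqref{eqn:Adjsemi} these satisfy
\begin{align*}
 a_{\xi}[\delta y, v_h] &= [w-u, v_h]_{\xi} && \forall v_h \in \mathcal{Y}_h, \\
 a_{\xi}[q_h, \delta p] &= [\delta y, q_h]_{\xi} + \gamma \bigl[\delta y - \widetilde{\mathbb{E}}[\delta y], q_h\bigr]_{\xi} && \forall q_h \in \mathcal{Y}_h.
\end{align*}

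Next, I would test the state equation with $v_h = \delta p$ and the adjoint equation with $q_h = \delta y$ to obtain
\begin{equation*}
[w-u, \delta p]_{\xi} = a_{\xi}[\delta y, \delta p] = [\delta y, \delta y]_{\xi} + \gamma \bigl[\delta y - \widetilde{\mathbb{E}}[\delta y], \delta y\bigr]_{\xi}.
\end{equation*}
The key observation is that the cross term $[\widetilde{\mathbb{E}}[\delta y], \delta y - \widetilde{\mathbb{E}}[\delta y]]_{\xi}$ vanishes: applying Fubini and pulling $\widetilde{\mathbb{E}}[\delta y]$ (which depends only on $\boldsymbol{x}$) out of the integral over $\Gamma$ leaves $\int_\Gamma (\delta y - \widetilde{\mathbb{E}}[\delta y]) \rho(\xi)\, d\xi = 0$. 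Hence
\begin{equation*}
\bigl[\delta y - \widetilde{\mathbb{E}}[\delta y], \delta y\bigr]_{\xi} = \bigl\lVert \delta y - \widetilde{\mathbb{E}}[\delta y] \bigr\rVert^2_{L^2(L^2(\mathcal{D});\Gamma)} \geq 0,
\end{equation*}
so $[w-u, \delta p]_{\xi} = \lVert \delta y \rVert^2_{L^2(L^2(\mathcal{D});\Gamma)} + \gamma \lVert \delta y - \widetilde{\mathbb{E}}[\delta y]\rVert^2_{L^2(L^2(\mathcal{D});\Gamma)} \geq 0$, which combined with the $\mu$-term gives \eqref{eqn:lemma_direc}.

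The only subtle point is the orthogonality of the fluctuation $\delta y - \widetilde{\mathbb{E}}[\delta y]$ against the mean in the $[\cdot,\cdot]_{\xi}$ inner product; everything else is linearity of the state/adjoint maps and testing with the correct functions. Because $\gamma \geq 0$, this orthogonality is exactly what prevents the risk-penalty term from spoiling strong monotonicity, so I would highlight this step explicitly.
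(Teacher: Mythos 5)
Your proof is correct and follows the same overall structure as the paper's: expand via \eqref{eq:direc_semi}, relate $[p_h(w)-p_h(u),w-u]_{\xi}$ to $a_{\xi}[\delta y,\delta p]$ by testing the state equation with $\delta p$ and the adjoint equation with $\delta y$, and then show the resulting expression is non-negative. The only difference lies in the treatment of the $\gamma$-term: the paper writes it as $(1+\gamma)\lVert \delta y\rVert^2-\gamma[\widetilde{\mathbb{E}}[\delta y],\delta y]_{\xi}$ and bounds the negative piece via Cauchy--Schwarz, Young, and Jensen to conclude the sum is at least $\lVert \delta y\rVert^2\ge 0$, whereas you use the exact orthogonality $\int_{\Gamma}(\delta y-\widetilde{\mathbb{E}}[\delta y])\rho(\xi)\,d\xi=0$ to identify the risk term as $\gamma\lVert \delta y-\widetilde{\mathbb{E}}[\delta y]\rVert^2_{L^2(L^2(\mathcal{D});\Gamma)}\ge 0$; this is an exact (bias--variance) identity rather than an estimate, and both routes deliver the same lower bound $\mu\lVert w-u\rVert^2_{L^2(L^2(\mathcal{D});\Gamma)}$.
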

\begin{proof}
	By \eqref{eq:direc_semi}, we have
	\begin{eqnarray}\label{eqn:lemma1}
		\big( \mathcal{J}'_h(w) -  \mathcal{J}'_h(u) \big) \cdot (w-u)  = [p_h(w)-p_h(u),w-u]_{\xi} + \mu [w-u,w-u]_{\xi}.
	\end{eqnarray}
	Now, it follows from \eqref{eqn:opti_semi} that
	\begin{align}\label{ineq:pwpu}
		[p_h(w)-p_h(u),w-u]_{\xi}  &= a_{\xi}[y_h(w)-y_h(u),p_h(w)-p_h(u)]  \nonumber \\
		&  =  (1+\gamma)[y_h(w)-y_h(u),y_h(w)-y_h(u)]_{\xi}  \nonumber  \\
		& \quad - \gamma \big[ \widetilde{\mathbb{E}}[y_h(w)-y_h(u)] ,y_h(w)-y_h(u)\big]_{\xi}.
	\end{align}
	The usage of  Cauchy-Schwarz and Young's inequalities yields
	\begin{align*}
		& - \gamma \big[ \widetilde{\mathbb{E}}[y_h(w)-y_h(u)] ,y_h(w)-y_h(u)\big]_{\xi} \\
		& \qquad \geq - \frac{\gamma}{2}\lVert \widetilde{\mathbb{E}}[y_h(w)-y_h(u)] \rVert^2_{L^2(L^2(\mathcal{D});\Gamma)} - \frac{\gamma}{2} \lVert y_h(w)-y_h(u) \rVert^2_{L^2(L^2(\mathcal{D});\Gamma)}.\nonumber
	\end{align*}
	Since all norms are convex functions, Jensen's inequality $ \lVert \widetilde{\mathbb{E}}[u] \rVert \leq \widetilde{\mathbb{E}}\lVert u \rVert$ and $\widetilde{\mathbb{E}}[\widetilde{\mathbb{E}}[u]] = \widetilde{\mathbb{E}}[u]$ give us
	\begin{equation}\label{ineq:negGamma}
		- \gamma \big[ \widetilde{\mathbb{E}}[y_h(w)-y_h(u)], y_h(w)-y_h(u)\big]_{\xi}
		  \geq - \gamma \lVert y_h(w)-y_h(u) \rVert^2_{L^2(L^2(\mathcal{D});\Gamma)}.
	\end{equation}
	Thus, inserting \eqref{ineq:negGamma} into \eqref{ineq:pwpu}, it is obtained that
	\begin{align}\label{eqn:lemma2}
		[p_h(w)-p_h(u),w-u]_{\xi} \geq \underbrace{\lVert y_h(w)-y_h(u) \rVert^2_{L^2(L^2(\mathcal{D});\Gamma)}}_{\geq 0}.
	\end{align}
	Hence, \eqref{eqn:lemma1} and \eqref{eqn:lemma2} imply that \eqref{eqn:lemma_direc} holds.
\end{proof}

Next, we derive  an upper  bound for the error  between the discrete solutions $(y_h,p_h)$ and the auxiliary solutions $(y_h(u),p_h(u))$.
\begin{lemma}\label{lemma:y_hp_h}
	Assume that $(y_h,p_h)$ and $(y_h(u),p_h(u))$, respectively, are the solutions of \eqref{eqn:opti_disc} and \eqref{eqn:opti_semi}. Then,  the following estimates exist for
	positive constants $C_1$ and $C_2$ independent of $h$
	\begin{subequations}
		\begin{eqnarray}
			\lVert y_h -y_h(u) \rVert _{\xi} &\leq& C_1 \lVert u-u_h \rVert _{L^2(L^2(\mathcal{D});\Gamma)}, \label{ineq:y_h} \\
			\lVert p_h- p_h(u) \rVert_{\xi} &\leq& C_2 \lVert u -u_h \rVert_{L^2(L^2(\mathcal{D});\Gamma)}.  \label{ineq:p_h}
		\end{eqnarray}
	\end{subequations}
\end{lemma}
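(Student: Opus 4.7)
The plan is to establish both bounds by subtracting the fully discrete system \eqref{eqn:opti_disc} from the auxiliary system \eqref{eqn:opti_semi}, extracting error equations for $y_h - y_h(u)$ and $p_h - p_h(u)$, and then invoking the coercivity \eqref{coer_cont} together with Cauchy--Schwarz and the Poincar\'e--type control of the $L^2(L^2(\mathcal{D});\Gamma)$--norm by the energy norm $\|\cdot\|_{\xi}$.

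For \eqref{ineq:y_h}, I would first subtract \eqref{eqn:Asemi} from \eqref{eqn:Adisc} to get
\begin{equation*}
    a_{\xi}[y_h - y_h(u), v_h] = b_{\xi}[u, v_h] - b_{\xi}[u_h, v_h] = [u_h - u, v_h]_{\xi}, \quad \forall v_h \in \mathcal{Y}_h,
\end{equation*}
since $b_{\xi}$ is the (negative) $L^2$--inner product in the control. Choosing the test function $v_h = y_h - y_h(u)$, coercivity yields $c_{cv}\|y_h - y_h(u)\|_{\xi}^2 \leq [u_h - u, y_h - y_h(u)]_{\xi}$. Applying Cauchy--Schwarz in $L^2(L^2(\mathcal{D});\Gamma)$ on the right and then a Poincar\'e--type bound $\|v\|_{L^2(L^2(\mathcal{D});\Gamma)} \leq C\|v\|_{\xi}$ (which follows from the jump/elliptic part of the energy norm on $V_h$) delivers \eqref{ineq:y_h} with $C_1 = C/c_{cv}$.

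For \eqref{ineq:p_h}, I would subtract the adjoint equations \eqref{eqn:Adjdisc} and \eqref{eqn:Adjsemi} to obtain, for every $q_h \in \mathcal{Y}_h$,
\begin{equation*}
    a_{\xi}[q_h, p_h - p_h(u)] = [y_h - y_h(u), q_h]_{\xi} + \gamma\bigl[y_h - y_h(u) - \widetilde{\mathbb{E}}[y_h - y_h(u)],\, q_h\bigr]_{\xi}.
\end{equation*}
Choosing $q_h = p_h - p_h(u)$ (which puts the error into the first slot of $a_{\xi}$ where coercivity acts), Jensen's inequality $\|\widetilde{\mathbb{E}}[v]\|_{L^2(L^2(\mathcal{D});\Gamma)} \leq \|v\|_{L^2(L^2(\mathcal{D});\Gamma)}$ as in \eqref{ineq:negGamma} and Cauchy--Schwarz bound the right--hand side by $(1+2\gamma)\|y_h - y_h(u)\|_{L^2(L^2(\mathcal{D});\Gamma)}\|p_h - p_h(u)\|_{L^2(L^2(\mathcal{D});\Gamma)}$. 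Dividing by $\|p_h - p_h(u)\|_{\xi}$ after another use of the Poincar\'e--type embedding and applying \eqref{ineq:y_h} closes the argument with $C_2 = C(1+2\gamma) C_1 / c_{cv}$.

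The main obstacle is the bookkeeping at the norm transitions: coercivity produces $\|\cdot\|_{\xi}$ while the duality pairings naturally live in $L^2(L^2(\mathcal{D});\Gamma)$. One must be careful that the tensor--product energy norm \eqref{energynorm} dominates the mixed $L^2$--norm on $\mathcal{Y}_h$, which rests on the penalty/jump contribution in the DG norm rather than on classical $H^1$--Poincar\'e. A secondary subtlety is the non--symmetry induced by the convection term: the adjoint error must be tested in the first slot of $a_{\xi}$, and Jensen's inequality must be used to neutralise the non--local $\widetilde{\mathbb{E}}[\cdot]$ term exactly as in the proof of Lemma~\ref{lemma:direc}.
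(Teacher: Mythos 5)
Your proposal is correct and follows essentially the same route as the paper: subtract the state equations, test with $y_h-y_h(u)$, and use coercivity plus Cauchy--Schwarz; then subtract the adjoint equations, test with $p_h-p_h(u)$, handle the $\widetilde{\mathbb{E}}$ term via Jensen's inequality as in Lemma~\ref{lemma:direc}, and conclude with \eqref{ineq:y_h}. The only difference is that you make explicit the discrete Poincar\'e-type bound of the $L^2(L^2(\mathcal{D});\Gamma)$--norm by $\lVert\cdot\rVert_{\xi}$, which the paper uses implicitly when absorbing that step into the constants.
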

\begin{proof}
	By subtracting \eqref{eqn:Asemi} from \eqref{eqn:Adisc} and taking $v_h = y_h - y_h(u)$, we have that
	\begin{eqnarray*}
		a_{\xi}[y_h -y_h(u),y_h -y_h(u)] = [u_h-u,y_h -y_h(u)]_{\xi}.
	\end{eqnarray*}
	With the help of  the coercivity of $a_{\xi}$ \eqref{coer_cont} and the Cauchy-Schwarz inequality, we obtain
	\begin{align*}
		c_{cv}\lVert y_h -y_h(u) \rVert _{\xi}^2	&\leq a_{\xi}[y_h -y_h(u),y_h -y_h(u)] \\
		&\leq \lVert u_h-u \rVert _{L^2(L^2(\mathcal{D});\Gamma)} \lVert  y_h -y_h(u) \rVert _{\xi},
	\end{align*}
	which yields the desired result \eqref{ineq:y_h}.
	
	Analogously, by subtracting \eqref{eqn:Adjsemi} from \eqref{eqn:Adjdisc} and taking $v_h = p_h - p_h(u)$, we have that
	\begin{align*}
		&a_{\xi}[p_h - p_h(u),p_h - p_h(u)] \\
		& \qquad =(1+\gamma)[y_h -y_h(u),p_h - p_h(u)]_{\xi}+\gamma \left[\widetilde{\mathbb{E}}[y_h(u) -y_h],p_h - p_h(u)\right]_{\xi}.
	\end{align*}
	It follows from the coercivity of $a_{\xi}$, Cauchy-Schwarz inequality, and Jensen's inequality that
	\begin{align}\label{ineq:ppp}
		c_{cv}\lVert p_h -p_h(u) \rVert _{\xi}^2 & \leq a_{\xi}[p_h - p_h(u),p_h - p_h(u)]  \nonumber \\
		& \leq (1+2\gamma)\lVert p_h - p_h(u) \rVert _{L^2(L^2(\mathcal{D});\Gamma)} \lVert  y_h -y_h(u) \rVert _{\xi}.
	\end{align}
	We note that the procedure applied in \eqref{ineq:negGamma} is also used in the derivation of \eqref{ineq:ppp}. Hence, by \eqref{ineq:ppp} and \eqref{ineq:y_h}, we deduce the desired result \eqref{ineq:p_h}.
\end{proof}

To obtain an upper bound for the control, we divide the domain $\mathcal{D}$ into pieces by considering the active and inactive parts of the control $u$  as done in \cite{RLi_WLiu_HMa_TTang_2002,TAkman_HYucel_BKarasozen_2014}:
\vspace{-4mm}
\begin{subequations}\label{dom}
	\begin{eqnarray}
		\mathcal{D}^{+} &=& \Biggl\{ \bigcup_{K}: K \subset \mathcal{D}, \; u_a < u\arrowvert_{K} < u_b\Biggr\},\\
		\mathcal{D}^{\partial} &=& \Biggl\{ \bigcup_{K}: K \subset \mathcal{D}, \; u\arrowvert_{K} = u_a \; \hbox{or} \; u\arrowvert_{K} = u_b\Biggr\},\\
		\mathcal{D}^{-} &=& \mathcal{D}\setminus (\mathcal{D}^{+} \cup \mathcal{D}^{\partial} ).
	\end{eqnarray}
\end{subequations}
It is assumed that these sets are disjoint, $\mathcal{D} = \mathcal{D}^{+}\cup \mathcal{D}^{\partial} \cup \mathcal{D}^{-} $, and  $\mathcal{D}^{-}$ satisfies the following inequality related to the regularity of $ u $ and $ \mathcal{T}_h$
\begin{eqnarray}
	\hbox{meas}(\mathcal{D}^{-}) \leq Ch,
\end{eqnarray}
which is valid if the boundary of the  $ \mathcal{D}^{\partial} $ is represented by finite rectifiable curves \cite{DMeidner_BVexler_2008a}. Further, we  define a set such that
$
\mathcal{D}^{+} \subset \mathcal{D}^{*} = \{ \boldsymbol{x} \in \mathcal{D} \; : \; u_a < u(\boldsymbol{x}) < u_b\}
$
\cite{ZZhou_NYan_2010a}.

\begin{lemma}\label{lemma:control}
	Let $(y,u,p)$ and $(y_h,u_h,p_h)$, respectively, be  the solutions of \eqref{eqn:opti1} and \eqref{eqn:opti_disc}. Assume that $ u \in L^2(W^{1,\infty}(\mathcal{D});\Gamma)$ with  $ u\arrowvert_{\mathcal{D}^{+}} \in L^2(H^2(\mathcal{D}^{+});\Gamma)$. Then, it holds that
	\begin{align}\label{ineq:control}
		& \lVert u -u_h \rVert _{L^2(L^2(\mathcal{D});\Gamma)} \nonumber \\
		& \quad \leq  C \lVert p-p_h(u) \rVert _{L^2(L^2(\mathcal{D};\Gamma))} + C h^{3/2} \|u\|_{L^2(W^{1,\infty}(\mathcal{D});\Gamma)}  \nonumber \\
		& \qquad + C \Big( h \|p\|_{L^2(H^1(\mathcal{D});\Gamma)} + \sum_{n=1}^{N}\bigg(\frac{k_n}{2}\bigg)^{q_n+1}\dfrac{ \lVert \partial^{q_n+1}_{\xi_n} p \rVert_{L^2(H^1({\mathcal{D}};\Gamma)}}{(q_n+1)!} \Big).
	\end{align}
\end{lemma}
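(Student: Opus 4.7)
The plan is to start from Lemma~\ref{lemma:direc}. Applying it with $w=u_h$ and rearranging, I obtain
\begin{equation*}
\mu \lVert u-u_h \rVert^2_{L^2(L^2(\mathcal{D});\Gamma)}
\leq \bigl(\mathcal{J}_h'(u) - \mathcal{J}_h'(u_h)\bigr)\cdot (u-u_h)
= \mathcal{J}_h'(u)\cdot(u-u_h) + \mathcal{J}_h'(u_h)\cdot(u_h-u),
\end{equation*}
and I will handle the two summands on the right separately, matching each to one of the three terms appearing in \eqref{ineq:control}.

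For the first summand, I insert the continuous adjoint: using \eqref{eq:direc_semi},
\begin{equation*}
\mathcal{J}_h'(u)\cdot(u-u_h) \;=\; [p_h(u)-p,\,u-u_h]_{\xi} \;+\; [p+\mu u,\,u-u_h]_{\xi}.
\end{equation*}
Since $u_h \in \mathcal{U}_h^{ad} \subset \mathcal{U}^{ad}$, the continuous first-order condition in \eqref{eqn:opti1} gives $[p+\mu u,\,u-u_h]_{\xi} = (\mathbb{E}[p]+\mu u,\,u-u_h) \le 0$. Cauchy--Schwarz and Young's inequality then bound the remaining term by $\tfrac{\mu}{4}\lVert u-u_h\rVert^2 + C\lVert p-p_h(u)\rVert^2_{L^2(L^2(\mathcal{D});\Gamma)}$, which accounts exactly for the first term on the right of \eqref{ineq:control}.

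For the second summand I introduce a constraint-preserving approximation $u_h^I \in \mathcal{U}_h^{ad}$ of $u$; the natural choice is the $L^2$-projection of $u$ onto $V_h$ clipped to $[u_a,u_b]$, which remains in $\mathcal{U}_h^{ad}$ and keeps the standard interpolation estimates. The discrete variational inequality \eqref{eqn:opti_disc3} with $w_h=u_h^I$ yields $[p_h+\mu u_h,\,u_h^I-u_h]_{\xi} \ge 0$, so
\begin{equation*}
\mathcal{J}_h'(u_h)\cdot(u_h-u) \;\le\; [p_h+\mu u_h,\,u_h^I-u]_{\xi}
\;=\; \bigl(\widetilde{\mathbb{E}}[p_h]+\mu u_h,\,u_h^I-u\bigr),
\end{equation*}
where I used that $u_h$ and $u_h^I-u$ are deterministic. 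To get the sharp $h^{3/2}$ rate rather than an $\mathcal{O}(h)$ rate, I split this integral along the decomposition \eqref{dom}. On $\mathcal{D}^{+}$ the continuous complementarity gives $\mathbb{E}[p]+\mu u \equiv 0$, so subtracting it converts the integrand into terms in $\widetilde{\mathbb{E}}[p_h - p_h(u)]$, $\widetilde{\mathbb{E}}[p_h(u) - p]$, and $\mu(u_h-u)$ paired against $u_h^I-u$: the first is absorbed on the left via Lemma~\ref{lemma:y_hp_h} and Young, while the second produces precisely the $\lVert p-p_h(u)\rVert$ and projection-based terms of \eqref{ineq:main_appr} applied to $p$. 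On the strictly active pieces of $\mathcal{D}^{\partial}$ the clipped interpolant coincides with $u$, so the integrand vanishes. The only nontrivial residual lives on the transition layer $\mathcal{D}^{-}$; there I combine $\lVert u_h^I - u\rVert_{L^{\infty}(\mathcal{D}^{-})} \le Ch\,\lVert u\rVert_{W^{1,\infty}(\mathcal{D})}$ with $\mathrm{meas}(\mathcal{D}^{-}) \le Ch$ to supply the missing $h^{1/2}$ and obtain a bound of order $h^{3/2}\lVert u\rVert_{L^2(W^{1,\infty}(\mathcal{D});\Gamma)}$.

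Collecting all contributions, absorbing every $\lVert u-u_h\rVert$ factor into the left-hand side via Young, and noting that the stochastic integration factors cleanly under $\widetilde{\mathbb{E}}$ thanks to the tensor-product structure of $\mathcal{Y}_h = V_h \otimes \mathcal{S}_k$, yields \eqref{ineq:control}. I expect the principal obstacle to be the active-set analysis on $\mathcal{D}^{\partial}$: extracting $h^{3/2}$ rather than the naive $h$ requires both the $W^{1,\infty}$ hypothesis on $u$ and the geometric assumption $\mathrm{meas}(\mathcal{D}^{-})\le Ch$, and demands care in the construction of $u_h^I$ so that it is simultaneously admissible, an accurate interpolant on $\mathcal{D}^{+}$, and coincides with $u$ on the strictly active part of $\mathcal{D}^{\partial}$.
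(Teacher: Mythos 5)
Your overall strategy is the same as the paper's: start from Lemma~\ref{lemma:direc}, use the continuous variational inequality with $v=u_h$ and the discrete one with an admissible interpolant of $u$, split the remaining consistency term along $\mathcal{D}^{+}\cup\mathcal{D}^{\partial}\cup\mathcal{D}^{-}$, and exploit $\mu u+\mathbb{E}[p]=0$ on $\mathcal{D}^{+}$, exactness of the interpolant on $\mathcal{D}^{\partial}$, and $\mathrm{meas}(\mathcal{D}^{-})\le Ch$. The first summand and the reduction of the second to $[\mu u_h+p_h,\,u_h^I-u]_{\xi}$ are handled exactly as in the paper.

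However, your treatment of the transition-layer term is where the argument does not close. After subtracting the discrete and semidiscrete adjoints, the residual on $\mathcal{D}^{-}$ is $[\mathbb{E}[p]+\mu u,\,u_h^I-u]_{\mathcal{D}^{-}}$, and you bound it by $\lVert u_h^I-u\rVert_{L^{\infty}(\mathcal{D}^{-})}\,\mathrm{meas}(\mathcal{D}^{-})^{1/2}\sim h^{3/2}$ while leaving the factor $\mathbb{E}[p]+\mu u$ at full size (it does \emph{not} vanish on $\mathcal{D}^{-}$). This produces a term that is linear, not quadratic, in the small quantities on the right of an inequality whose left side is $\mu\lVert u-u_h\rVert^2$; after taking square roots you would only obtain an $h^{3/4}$-type rate, not the stated bound, and you never generate the $h\lVert p\rVert_{L^2(H^1(\mathcal{D});\Gamma)}$ and the stochastic-sum terms that actually appear in \eqref{ineq:control}. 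The paper closes this gap by splitting $\mu u$ and $p$ on $\mathcal{D}^{-}$ via the projections $\Pi_h$ and $\mathcal{P}_{hn}$ (the terms $T_1$, $T_2$ in \eqref{ineq:lem6}), bounding the projection errors with \eqref{lprojest} and \eqref{estr}, the projected parts with the $L^{\infty}$/measure argument, and then applying Young's inequality so every contribution enters squared; it is precisely this step that yields $h^2\lVert p\rVert^2_{L^2(H^1)}$, the squared stochastic sum, and $h^3\lVert u\rVert^2$, and hence the lemma after taking square roots. A secondary point: the clipped $L^2$-projection you propose is in general not piecewise linear, so it need not lie in $\mathcal{U}_h^{ad}\subset V_h$; the standard Lagrange interpolant $\Pi u$ (used in the paper) is admissible, coincides with $u$ on $\mathcal{D}^{\partial}$, and gives the needed $O(h^2)$ accuracy on $\mathcal{D}^{+}$.
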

\begin{proof}
	With the help of Lemma \ref{lemma:direc}, \eqref{eq:direc_semi}, the standard Lagrangian interpolation $\Pi u$, the assumption $\mathcal{D}^{+} \subset \mathcal{D}^{*}$, and the notation $p_h=p_h(u_h)$,  we obtain
	\begin{align}\label{ineq:lem1}
		\mu \lVert u-u_h \rVert^2 _{L^2(L^2(\mathcal{D});\Gamma)} &\leq  \mathcal{J}'_h(u) \cdot (u-u_h ) - \mathcal{J}'_h(u_h) \cdot (u-u_h) \nonumber \\
		& = [\mu u + p_h(u),u-u_h]_{\xi} - [\mu u_h + p_h,u-u_h]_{\xi} \nonumber \\
		& = \underbrace{[\mu u + p,u-u_h]_{\xi}}_{-\mathcal{J}'(u) \cdot (u_h-u) \leq 0} - [p-p_h(u),u-u_h]_{\xi} \nonumber \\
		& \quad + \underbrace{[\mu u_h + p_h,u_h - \Pi u]_{\xi}}_{-\mathcal{J}_h'(u_h) \cdot (\Pi u-u_h) \leq 0} + [\mu u_h + p_h,\Pi u -u]_{\xi} \nonumber \\
		& \leq [\mu u_h + p_h,\Pi u -u]_{\xi} + [p_h(u)-p,u-u_h]_{\xi}.
	\end{align}
	The first term in \eqref{ineq:lem1} can be rewritten as follows
	\begin{align}\label{ineq:lem2}
		[\mu u_h + p_h,\Pi u -u]_{\xi} & = [\mu u_h + p_h -\mu u- p,\Pi u -u]_{\xi} + [\mu u + p, \Pi u -u]_{\xi} \nonumber\\
		& = [\mu u_h -\mu u,\Pi u -u]_{\xi} +  [\mu u + p, \Pi u -u]_{\xi}\nonumber \\
		& \quad + [ p_h - p_h(u),\Pi u -u]_{\xi} + [p_h(u)-p, \Pi u -u]_{\xi}.
	\end{align}
	Then, inserting \eqref{ineq:lem2} into \eqref{ineq:lem1} and applying Cauchy-Schwarz and Young's inequalities and Lemma \ref{lemma:y_hp_h}, we obtain
	\begin{eqnarray}\label{ineq:lem3}
		 \mu \lVert u-u_h \rVert^2 _{L^2(L^2(\mathcal{D});\Gamma)}
		&\leq& c_1\lVert p_h(u)-p \rVert_{L^2(L^2(\mathcal{D});\Gamma)}^2 + c_2\lVert u-u_h \rVert_{L^2(L^2(\mathcal{D});\Gamma)}^2  \nonumber \\
		&& + c_3\lVert u - \Pi u \rVert_{L^2(L^2(\mathcal{D});\Gamma)}^2 + [\mu u + p, \Pi u -u]_{\xi}.
	\end{eqnarray}
	Since $ \Pi u(\boldsymbol{x}) = u(\boldsymbol{x}) $ for any vertex $\boldsymbol{x} $,  $ \Pi u \in \mathcal{U}_h^{ad}$  and the following estimates hold
	\begin{subequations}\label{ineq:lem4}
		\begin{eqnarray}
			\lVert u - \Pi u \rVert_{L^2(L^2(\mathcal{D}^{+});\Gamma)} &\leq& Ch^2\lVert u \rVert_{L^2(H^2(\mathcal{D}^{+});\Gamma)}, \label{ineq:lem4a} \\
			\lVert u - \Pi u \rVert_{L^2(W^{0,\infty}(\mathcal{D}^{-});\Gamma)} &\leq& Ch\lVert u \rVert_{L^2(W^{1,\infty}(\mathcal{D}^{-});\Gamma)} \label{ineq:lem4b}
		\end{eqnarray}
	\end{subequations}
	for $ u \in L^2(W^{1,\infty}(\mathcal{D});\Gamma) $ and $u \arrowvert_{\mathcal{D}^*} \subset L^2( H^2(\mathcal{D}^*);\Gamma)$. Hence
	\begin{align}\label{ineq:lem5}
		& \lVert u - \Pi u \rVert_{L^2(L^2(\mathcal{D});\Gamma)}^2 \nonumber \\
		& \quad = \lVert u - \Pi u \rVert_{L^2(L^2(\mathcal{D}^{+});\Gamma)}^2 + \underbrace{\lVert u - \Pi u \rVert_{L^2(L^2(\mathcal{D}^{\partial});\Gamma)}^2}_{=0} + \lVert u - \Pi u \rVert_{L^2(L^2(\mathcal{D}^{-});\Gamma)}^2 \nonumber \\
		& \quad \leq  \lVert u - \Pi u \rVert_{L^2(L^2(\mathcal{D}^{+});\Gamma)}^2  + C\lVert u - \Pi u \rVert_{L^2(W^{0,\infty}(\mathcal{D}^{-});\Gamma)}^2\hbox{meas}(\mathcal{D}^{-}) \nonumber \\
		& \quad \leq  Ch^4 \lVert u  \rVert_{L^2(H^2(\mathcal{D}^{+});\Gamma)}^2 + Ch^3\lVert u \rVert_{L^2(W^{1,\infty}(\mathcal{D}^{-});\Gamma)}^2 \nonumber \\
		& \quad \leq Ch^3 \left( h \lVert u  \rVert_{L^2(H^2(\mathcal{D}^{+});\Gamma)}^2 + \lVert u \rVert_{L^2(W^{1,\infty}(\mathcal{D}^{-});\Gamma)}^2 \right)\nonumber \\
		& \quad \leq Ch^3 \left( \lVert u  \rVert_{L^2(H^2(\mathcal{D}^{+});\Gamma)}^2 + \lVert u \rVert_{L^2(W^{1,\infty}(\mathcal{D}^{-});\Gamma)}^2 \right).
	\end{align}
	By the variational inequality  \eqref{eqn:opti_disc3} and the definitions of domains \eqref{dom}, we have
	\begin{align*}
		\mu u + p = 0 \; \hbox{on} \;\mathcal{D}^{+} \quad \hbox{and} \quad \Pi  u - u = 0 \;  \hbox{on} \;\mathcal{D}^{\partial}.
	\end{align*}
	Then,
	\begin{align}\label{ineq:lem6}
		[\mu u + p, \Pi u -u]_{\xi}  = & \underbrace{ [\mu u - \Pi_h(\mu u ) + \Pi_h(\mu u ), \Pi u -u]_{\mathcal{D}^{-}}}_{T_1}        \nonumber \\
	 & +   \underbrace{[p - \mathcal{P}_{hn}(p) +\mathcal{P}_{hn}(p), \Pi u -u]_{\mathcal{D}^{-}}}_{T_2}.
	\end{align}
	It follows from the inequalities \eqref{lprojest}, \eqref{eq:l2bound_phy}, and \eqref{ineq:lem4b}, Sobolev embedding theorem, see, e.g., \cite{RAAdams_1975}, and Young's inequality that
	\begin{align}\label{ineq:lem7}
		T_1 &= [\mu u - \Pi_h(\mu u ), \Pi u -u]_{\mathcal{D}^{-}} + [ \Pi_h(\mu u ), \Pi u -u]_{\mathcal{D}^{-}}\nonumber \\
		&   \leq  \mu \Big(\|u- \Pi_hu\|_{L^2(L^2(\mathcal{D}^{-});\Gamma)}+ \|\Pi_hu\|_{L^2(L^2(\mathcal{D}^{-});\Gamma)} \Big) \|u - \Pi u\|_{L^2(L^2(\mathcal{D}^{-});\Gamma)} \nonumber \\
		&   \leq  \mu \Big(\|u- \Pi_hu\|_{L^2(L^2(\mathcal{D}^{-});\Gamma)}+ C\|u\|_{L^2(L^2(\mathcal{D}^{-});\Gamma)} \Big) \|u - \Pi u\|_{L^2(L^2(\mathcal{D}^{-});\Gamma)} \nonumber \\
        &   \leq  \mu \Big(\|u- \Pi_hu\|_{L^2(L^2(\mathcal{D}^{-});\Gamma)}+ C\|u\|_{L^2(W^{0,\infty}(\mathcal{D}^{-});\Gamma)}\hbox{meas}(\mathcal{D}^{-}) \Big) \nonumber \\
        &   \qquad \times \|u - \Pi u\|_{L^2(L^2(\mathcal{D}^{-});\Gamma)} \nonumber \\
		&   \leq  C h \|u\|_{L^2(H^1(\mathcal{D}^{-});\Gamma)}  \lVert u - \Pi u \rVert_{L^2(W^{0,\infty}(\mathcal{D}^{-});\Gamma)}\hbox{meas}(\mathcal{D}^{-}) \nonumber \\
		&   \leq  C h^3 \|u\|_{L^2(H^1(\mathcal{D}^{-});\Gamma)} \lVert u \rVert_{L^2(W^{1,\infty}(\mathcal{D}^{-});\Gamma)} \nonumber \\
        &   \leq  C h^3 \Big( \|u\|^2_{L^2(H^1(\mathcal{D}^{-});\Gamma)} + \lVert u \rVert_{L^2(W^{1,\infty}(\mathcal{D}^{-});\Gamma)}^2 \Big).
	\end{align}
	Next, with the help of  the projector operator in \eqref{2proj} and the bounds in \eqref{estr},\eqref{lprojest}, \eqref{ineq:boundproc}, and \eqref{ineq:lem4b}, Sobolev embedding theorem, and Cauchy and Young's inequalities,  we find  a bound for the second term $T_2$ in \eqref{ineq:lem6}
	\begin{align}\label{ineq:lem8}
		T_2 &= [p - \Pi_{h}(p), \Pi u -u]_{\mathcal{D}^{-}}  + [\mathcal{P}_{hn}(p), \Pi u -u]_{\mathcal{D}^{-}} + [\Pi_{h}(I-\Pi_n)(p), \Pi u -u]_{\mathcal{D}^{-}}   \nonumber \\
		&  \leq  \Big(\|p - \Pi_{h}(p)\|_{L^2(L^2(\mathcal{D}^{-});\Gamma)} +  \|\mathcal{P}_{hn}(p)\|_{L^2(L^2(\mathcal{D}^{-});\Gamma)} \Big) \|\Pi u -u\|_{L^2(L^2(\mathcal{D}^{-});\Gamma)}  \nonumber \\
		& \quad  +  \|\Pi_{h}(I-\Pi_n)(p)\|_{L^2(L^2(\mathcal{D}^{-});\Gamma)}  \|\Pi u -u\|_{L^2(L^2(\mathcal{D}^{-});\Gamma)} \nonumber \\
        & \leq   C_1 \Big(  h \|p\|_{L^2(H^1(\mathcal{D}^{-});\Gamma)} + \|p\|_{L^2(W^{0,\infty}(\mathcal{D}^{-});\Gamma)}\hbox{meas}(\mathcal{D}^{-})  \Big) h^2 \lVert u \rVert_{L^2(W^{1,\infty}(\mathcal{D}^{-});\Gamma)} \nonumber \\
        & \quad  +  C_2 \sum_{n=1}^{N}\bigg(\frac{k_n}{2}\bigg)^{q_n+1}\dfrac{ \lVert \partial^{q_n+1}_{\xi_n} p \rVert_{L^2(H^1(\mathcal{D^-});\Gamma)} }{(q_n+1)!} h^2 \lVert u \rVert_{L^2(W^{1,\infty}(\mathcal{D}^{-});\Gamma)} \nonumber \\
        & \leq  C_1 \Big( \frac{h^2}{2} \|p\|_{L^2(H^1(\mathcal{D}^{-});\Gamma)}^2 + \frac{h^4}{2}\lVert u \rVert_{L^2(W^{1,\infty}(\mathcal{D}^{-});\Gamma)}^2 \Big) \nonumber \\
		& \quad + C_2 \Bigg( \frac{1}{2}\sum_{n=1}^{N}\bigg(\frac{k_n}{2}\bigg)^{2q_n+2}\dfrac{ \lVert \partial^{q_n+1}_{\xi_n} p \rVert_{L^2(H^1(\mathcal{D^-});\Gamma)}^2}{((q_n+1)!)^2}
		+ \frac{h^4}{2} \lVert u \rVert_{L^2(W^{1,\infty}(\mathcal{D}^{-});\Gamma)}^2 \Bigg).
	\end{align}
	Combination of  \eqref{ineq:lem7} and  \eqref{ineq:lem8} yields
	\begin{align}\label{ineq:lem9}
		& [\mu u + p, \Pi u -u]_{\xi}  \nonumber \\
        & \quad \leq  C h^3 \Big( \|u\|^2_{L^2(H^1(\mathcal{D}^{-});\Gamma)} + \lVert u \rVert_{L^2(W^{1,\infty}(\mathcal{D}^{-});\Gamma)}^2 \Big) \nonumber \\
		& \qquad   + C h^2 \|p\|_{L^2(H^1(\mathcal{D}^{-});\Gamma)}^2 + C \sum_{n=1}^{N}\bigg(\frac{k_n}{2}\bigg)^{2q_n+2}\dfrac{ \lVert \partial^{q_n+1}_{\xi_n} p \rVert_{L^2(H^1(\mathcal{D^-});\Gamma)}^2}{((q_n+1)!)^2}.
	\end{align}
	Finally, inserting \eqref{ineq:lem5} and \eqref{ineq:lem9} into \eqref{ineq:lem3}, we completes the proof of Lemma~\ref{lemma:control}.
\end{proof}

\begin{lemma}\label{lemma:state_adjo}
	Assume that $ (y,p) $ and $ (y_h(u),p_h(u)) $, respectively,  are the solutions of \eqref{eqn:opti1} and \eqref{eqn:opti_semi}. Then, we have
	\begin{align}\label{ineq:y_hy}
		\lVert y -y_h(u) \rVert _{\xi} &\leq Ch \lVert y \rVert_{L^2(H^2(\mathcal{D});\Gamma)} \nonumber \\
		&\quad + \sum_{n=1}^{N}\bigg(\frac{k_n}{2}\bigg)^{q_n+1}\dfrac{\lVert \partial^{q_n+1}_{\xi_n} y \rVert_{L^2(H^1(\mathcal{D});\Gamma)} }{(q_n+1)!}
	\end{align}
	and
	\begin{align}\label{ineq:p_hp}
		& \lVert p- p_h(u) \rVert_{\xi}\nonumber \\
		& \;\leq Ch  \Big( \lVert y \rVert_{L^2(H^2(\mathcal{D});\Gamma)} +\lVert p \rVert_{L^2(H^2(\mathcal{D});\Gamma)} \Big) \nonumber \\
		&\quad + \sum_{n=1}^{N}\bigg(\frac{k_n}{2}\bigg)^{q_n+1}\dfrac{\Big( \lVert \partial^{q_n+1}_{\xi_n} y \rVert_{L^2(H^1(\mathcal{D});\Gamma)} + \lVert \partial^{q_n+1}_{\xi_n} p \rVert_{L^2(H^1(\mathcal{D});\Gamma)}\Big) }{(q_n+1)!}.
	\end{align}
\end{lemma}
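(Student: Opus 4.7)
The plan is to derive the two bounds in sequence: first prove \eqref{ineq:y_hy} as a standard Galerkin energy estimate for the state equation, and then bootstrap it to obtain \eqref{ineq:p_hp}, exploiting the fact that the data of the adjoint equation differ between the continuous and semi-discrete problems only through $y - y_h(u)$.

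For the state estimate, the first step is to establish Galerkin orthogonality. Since the SIPG forms are consistent on $H^2$-regular solutions (all interior jumps of the continuous $y$ vanish, and the Dirichlet data in $l_h$ cancels the boundary contributions), subtracting \eqref{eqn:Asemi} from the corresponding continuous relation yields $a_\xi[y - y_h(u), v_h] = 0$ for every $v_h \in \mathcal{Y}_h$. Combining this with the coercivity and continuity bounds \eqref{coer_cont}, I obtain a C\'ea-type inequality: for any $v_h \in \mathcal{Y}_h$,
\begin{equation*}
c_{cv} \lVert y - y_h(u) \rVert_\xi^2 \leq a_\xi[y - y_h(u), y - v_h] \leq c_{ct} \lVert y - y_h(u) \rVert_\xi \lVert y - v_h \rVert_\xi,
\end{equation*}
whence $\lVert y - y_h(u) \rVert_\xi \leq (c_{ct}/c_{cv})\lVert y - v_h \rVert_\xi$. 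Choosing $v_h = \mathcal{P}_{hn} y$ from \eqref{2proj} and invoking the approximation property \eqref{ineq:main_appr} (after controlling the edge-penalty and convective-jump contributions present in the energy norm but absent from the $L^2(H^1(\mathcal{D});\Gamma)$ norm via standard trace and inverse estimates under the $H^2$-regularity of $y$) produces \eqref{ineq:y_hy}.

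For the adjoint estimate, subtracting \eqref{eqn:Adjsemi} from the continuous adjoint equation yields the perturbed orthogonality
\begin{equation*}
a_\xi[q_h, p - p_h(u)] = [y - y_h(u), q_h]_\xi + \gamma \bigl[ (y - y_h(u)) - \widetilde{\mathbb{E}}[y - y_h(u)], q_h \bigr]_\xi, \quad \forall q_h \in \mathcal{Y}_h.
\end{equation*}
Coercivity gives $c_{cv}\lVert p - p_h(u) \rVert_\xi^2 \leq a_\xi[p - p_h(u), p - p_h(u)]$, which I split, for any $v_h \in \mathcal{Y}_h$, as $a_\xi[p - p_h(u), p - v_h] + a_\xi[p - p_h(u), v_h - p_h(u)]$. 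Continuity bounds the first piece by $c_{ct}\lVert p - p_h(u) \rVert_\xi \lVert p - v_h \rVert_\xi$; for the second, applying the perturbed orthogonality with $q_h = v_h - p_h(u)$ reduces it to inner products involving $y - y_h(u)$. Using Cauchy--Schwarz, Jensen's inequality in the spirit of \eqref{ineq:negGamma}, a Poincar\'e-type embedding $\lVert \cdot \rVert_{L^2(L^2(\mathcal{D});\Gamma)} \leq C \lVert \cdot \rVert_\xi$, the triangle bound $\lVert v_h - p_h(u) \rVert_\xi \leq \lVert p - v_h \rVert_\xi + \lVert p - p_h(u) \rVert_\xi$, and Young's inequality to absorb $\lVert p - p_h(u) \rVert_\xi^2$ on the left produces
\begin{equation*}
\lVert p - p_h(u) \rVert_\xi \leq C \bigl( \lVert p - v_h \rVert_\xi + \lVert y - y_h(u) \rVert_\xi \bigr).
\end{equation*}
Selecting $v_h = \mathcal{P}_{hn} p$, applying \eqref{ineq:main_appr} once more, and substituting \eqref{ineq:y_hy} for the state-error term then delivers \eqref{ineq:p_hp}.

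The main technical obstacle is the passage from the $L^2(H^1(\mathcal{D});\Gamma)$-norm approximation \eqref{ineq:main_appr} to the full DG energy norm $\lVert \cdot \rVert_\xi$, which additionally contains interior-edge penalty terms and convective upwind jumps. These extra contributions must be controlled by the $L^2(H^2(\mathcal{D});\Gamma)$-regularity of $y$ (respectively $p$) through standard trace and scaling arguments on each element; once this is established, the adjoint step reduces to bookkeeping guided by the perturbed Galerkin identity and the state estimate.
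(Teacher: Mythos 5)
Your route is essentially the paper's: for the state, a C\'ea-type argument (coercivity and continuity from \eqref{coer_cont} plus Galerkin orthogonality from SIPG consistency) followed by the approximation property \eqref{ineq:main_appr}; for the adjoint, coercivity plus the perturbed orthogonality, with the data perturbation controlled through $\lVert y-y_h(u)\rVert_{\xi}$ and Cauchy--Schwarz, Jensen, and Young. The only cosmetic difference is the comparison function: the paper inserts $\Pi_n(\mathcal{R}_h(\cdot))$ where you take $\mathcal{P}_{hn}$, which changes nothing structurally. Your explicit remark that \eqref{ineq:main_appr} is an $L^2(H^1(\mathcal{D});\Gamma)$ bound while $\lVert\cdot\rVert_{\xi}$ also carries penalty and upwind jump terms is a point the paper's proof passes over silently, so flagging it is to your credit.

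One concrete slip in the adjoint step: you split the error in the \emph{second} argument of $a_{\xi}$ and then invoke the perturbed orthogonality on $a_{\xi}[p-p_h(u),\,v_h-p_h(u)]$, but your orthogonality identity reads $a_{\xi}[q_h,\,p-p_h(u)]=\cdots$ with the discrete function in the \emph{first} (test) slot, and $a_{\xi}$ is nonsymmetric because of the convection term, so the identity does not apply to the term as you wrote it. The remedy is simply to split in the first argument, $a_{\xi}[p-p_h(u),\,p-p_h(u)]=a_{\xi}[p-v_h,\,p-p_h(u)]+a_{\xi}[v_h-p_h(u),\,p-p_h(u)]$, which is exactly the arrangement in the paper; after that your bookkeeping (continuity on the first piece, orthogonality plus the state bound on the second, Young's inequality to absorb $\lVert p-p_h(u)\rVert_{\xi}^2$) goes through unchanged and yields \eqref{ineq:p_hp}.
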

\begin{proof}
	An application of  the coercivity and continuity of  $a_{\xi}$ in \eqref{coer_cont}, $H^1(\mathcal{D})$--projection $\mathcal{R}_h$ in \eqref{Hproj_main}, $L^2(\mathcal{D})$--projection $\Pi_n$ in \eqref{eq:l2projc}, and Galerkin orthogonality yields
	\begin{align*}
		&   c_{cv}\lVert y -y_h(u) \rVert _{\xi}^2	\nonumber \\
		& \quad \leq a_{\xi}[y -y_h(u),y- y_h(u)] \\
		& \quad \leq a_{\xi}[y -y_h(u),y -\Pi_n\left(\mathcal{R}_h(y)\right)] + \underbrace{a_{\xi}[y -y_h(u),\Pi_n\left(\mathcal{R}_h(y)\right) -y_h(u)]}_{=0} \\
		& \quad \leq c_{ct}\lVert y-y_h(u) \rVert _{\xi} \lVert  y -\Pi_n\left(\mathcal{R}_h(y)\right) \rVert _{\xi}.
	\end{align*}
	Then, by the approximation property \eqref{ineq:main_appr}, we get
	\begin{align*}
		\lVert y -y_h(u) \rVert _{\xi} 	&\leq \frac{c_{ct}}{c_{cv}}\lVert  y -\Pi_n\left(\mathcal{R}_h(y)\right) \rVert _{\xi}\\
		& \leq Ch \lVert y \rVert_{L^2(H^2(\mathcal{D});\Gamma)}
		+ \sum_{n=1}^{N}\bigg(\frac{k_n}{2}\bigg)^{q_n+1}\dfrac{\lVert \partial^{q_n+1}_{\xi_n} y \rVert_{L^2(H^1(\mathcal{D});\Gamma)} }{(q_n+1)!},
	\end{align*}
	which is the desired result \eqref{ineq:y_hy}. Analogously, we deduce that
	\begin{align*}
		& c_{cv}\lVert p -p_h(u) \rVert _{\xi}^2	\nonumber \\
		& \quad \leq a_{\xi}[p -p_h(u),p- p_h(u)] \\
		& \quad \leq a_{\xi}[p -\Pi_n\left(\mathcal{R}_h(y)\right),p -p_h(u)] + \underbrace{a_{\xi}[\Pi_n\left(\mathcal{R}_h(y)\right) -p_h(u),p -p_h(u)]}_{=0} \\
		& \quad = (1+\gamma) [y-y_h(u),p -\Pi_n\left(\mathcal{R}_h(y)\right)]_{\xi} +  \gamma \big[\widetilde{\mathbb{E}}[y_h(u)-y] ,p -\Pi_n\left(\mathcal{R}_h(y)\right) \big]_{\xi} \\
		& \quad \leq (1+2\gamma)  \lVert y-y_h(u) \rVert _{\xi} \lVert  p -\Pi_n\left(\mathcal{R}_h(y)\right) \rVert _{L^2(L^2(\mathcal{D});\Gamma)}\\
		& \quad \leq \frac{(1+2\gamma)}{2}  \lVert y-y_h(u) \rVert^2_{\xi} + \frac{(1+2\gamma)}{2}   \lVert  p -\Pi_n\left(\mathcal{R}_h(y)\right) \rVert^2 _{L^2(L^2(\mathcal{D});\Gamma)},
	\end{align*}
	where the definition of bilinear forms, the procedure applied in \eqref{ineq:negGamma}, and Young's inequality are used. Then, using the approximation property \eqref{ineq:main_appr} and \eqref{ineq:y_hy}, we complete the proof of \eqref{ineq:p_hp}.
\end{proof}

Now, we finalize the error analysis   by combining the findings in Lemmas~\ref{lemma:control} and~\ref{lemma:state_adjo}.
\begin{theorem}\label{thm:error}
	Assume that  $ (y,u,p) $ and $ (y_h,u_h,p_h) $, respectively, are the solutions of \eqref{eqn:opti1} and \eqref{eqn:opti_disc}. Then, it holds that
	\begin{align}\label{ineq:error}
		&	\lVert u-u_h \rVert _{L^2(L^2(\mathcal{D});\Gamma)} + \lVert y -y_h \rVert _{\xi} + \lVert p -p_h \rVert _{\xi}  \nonumber \\
		& \; \leq C h^{3/2} \|u\|_{L^2(W^{1,\infty}(\mathcal{D});\Gamma)} + Ch \big( \lVert y \rVert_{L^2(H^2(\mathcal{D});\Gamma)} +\lVert p \rVert_{L^2(H^2(\mathcal{D});\Gamma)}\big) \nonumber \\
		& \;\;\; + C \sum_{n=1}^{N}\bigg(\frac{k_n}{2}\bigg)^{q_n+1}\dfrac{\Big(\lVert \partial^{q_n+1}_{\xi_n} y \rVert_{L^2(H^1(\mathcal{D});\Gamma)} + \lVert \partial^{q_n+1}_{\xi_n} p \rVert_{L^2(H^1(\mathcal{D});\Gamma)}\Big)}{(q_n+1)!}.
	\end{align}
\end{theorem}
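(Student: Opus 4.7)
The plan is to combine Lemmas~\ref{lemma:direc}--\ref{lemma:state_adjo} via a triangle-inequality splitting. First, I would introduce the auxiliary solutions $(y_h(u),p_h(u))$ from \eqref{eqn:opti_semi} and write
\begin{align*}
\lVert y - y_h\rVert_\xi &\le \lVert y - y_h(u)\rVert_\xi + \lVert y_h(u) - y_h\rVert_\xi, \\
\lVert p - p_h\rVert_\xi &\le \lVert p - p_h(u)\rVert_\xi + \lVert p_h(u) - p_h\rVert_\xi.
\end{align*}
The first summand on each right-hand side is already controlled by Lemma~\ref{lemma:state_adjo}, producing exactly the $Ch$ term together with the stochastic tail $\sum_n (k_n/2)^{q_n+1}\lVert \partial^{q_n+1}_{\xi_n}(\cdot)\rVert/(q_n+1)!$ that appears in \eqref{ineq:error}. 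The second summand on each line is controlled by Lemma~\ref{lemma:y_hp_h} in terms of $\lVert u-u_h\rVert_{L^2(L^2(\mathcal{D});\Gamma)}$.

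Consequently, everything reduces to bounding the control error. For this I would apply Lemma~\ref{lemma:control}, which yields
\[
\lVert u-u_h\rVert_{L^2(L^2(\mathcal{D});\Gamma)} \le C\lVert p - p_h(u)\rVert_{L^2(L^2(\mathcal{D});\Gamma)} + C h^{3/2}\lVert u\rVert_{L^2(W^{1,\infty}(\mathcal{D});\Gamma)} + (\text{stochastic tail}).
\]
Since $\lVert\cdot\rVert_{L^2(L^2(\mathcal{D});\Gamma)} \le C\lVert\cdot\rVert_\xi$ on the discrete spaces by the definition of the energy norm \eqref{energynorm}, the term $\lVert p - p_h(u)\rVert_{L^2(L^2(\mathcal{D});\Gamma)}$ is again bounded by Lemma~\ref{lemma:state_adjo}, contributing the $Ch(\lVert y\rVert_{L^2(H^2)}+\lVert p\rVert_{L^2(H^2)})$ plus stochastic tail already present in \eqref{ineq:error}; crucially this is of higher order in $h$ than the $Ch^{3/2}$ contribution, so it does not degrade the final rate.

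Substituting the control estimate back into the Lemma~\ref{lemma:y_hp_h} bounds for $\lVert y_h(u)-y_h\rVert_\xi$ and $\lVert p_h(u)-p_h\rVert_\xi$ and adding the three quantities on the left-hand side of \eqref{ineq:error} finishes the argument, since the slowest ($h^{3/2}$) term comes from the control piece and all other contributions are either of order $h$ on the physical side or of the stated polynomial-chaos order on the stochastic side. I do not expect any genuine technical obstacle here: the three supporting lemmas already do the heavy lifting, and the only care needed is bookkeeping to ensure (i) the norm equivalence between $\lVert\cdot\rVert_\xi$ and $\lVert\cdot\rVert_{L^2(L^2(\mathcal{D});\Gamma)}$ is used in the correct direction when passing from Lemma~\ref{lemma:control} back into Lemma~\ref{lemma:y_hp_h}, and (ii) the stochastic remainder terms for $y$ and $p$ are merged into a single sum, as written in \eqref{ineq:error}. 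The mildly delicate point, if any, is tracking that the $h^{3/2}$ rate originates solely from the boundary layer $\mathcal{D}^-$ contribution in Lemma~\ref{lemma:control} and hence survives as the dominant physical-space term in the combined estimate.
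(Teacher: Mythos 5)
Your proposal is correct and follows essentially the same route as the paper: a triangle-inequality splitting through the auxiliary pair $(y_h(u),p_h(u))$, with Lemma~\ref{lemma:state_adjo} for the consistency errors, Lemma~\ref{lemma:y_hp_h} for the auxiliary-to-discrete differences, and Lemma~\ref{lemma:control} (with $\lVert p-p_h(u)\rVert_{L^2(L^2(\mathcal{D});\Gamma)}\lesssim \lVert p-p_h(u)\rVert_{\xi}$, which the paper also uses implicitly) for the control error. Only your closing remark on rates is backwards --- the $Ch$ term is of \emph{lower} order than $Ch^{3/2}$, so the $O(h)$ contribution is the dominant physical-space term --- but since \eqref{ineq:error} retains both terms this does not affect the validity of the argument.
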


\begin{proof}
	From \eqref{ineq:control} and  \eqref{ineq:p_hp}, we obtain that
	\begin{align}\label{ineq:thm1}
		&\lVert u-u_h \rVert _{L^2(L^2(\mathcal{D});\Gamma)} \nonumber \\
		& \quad \leq C h^{3/2} \|u\|_{L^2(W^{1,\infty}(\mathcal{D});\Gamma)} + Ch \big( \lVert y \rVert_{L^2(H^2(\mathcal{D});\Gamma)} +\lVert p \rVert_{L^2(H^2(\mathcal{D});\Gamma)}\big) \nonumber \\
		& \qquad + C \sum_{n=1}^{N}\bigg(\frac{k_n}{2}\bigg)^{q_n+1}\dfrac{\Big(\lVert \partial^{q_n+1}_{\xi_n} y \rVert_{L^2(H^1(\mathcal{D});\Gamma)} + \lVert \partial^{q_n+1}_{\xi_n} p \rVert_{L^2(H^1(\mathcal{D});\Gamma)}\Big)}{(q_n+1)!}.
	\end{align}
	Moreover, by Lemmas~\ref{lemma:y_hp_h} and~\ref{lemma:state_adjo}, and  the bound \eqref{ineq:thm1}, we obtain
	\begin{align}\label{ineq:thm2}
		& \lVert y -y_h \rVert _{\xi} + \lVert p -p_h \rVert _{\xi} \nonumber \\
		& \;\leq \lVert y -y_h(u) \rVert _{\xi} +   \lVert y_h(u)-y_h \rVert _{\xi}  + \lVert p -p_h(u) \rVert _{\xi}  +\lVert  p_h(u)-p_h \rVert _{\xi}\nonumber \\
		& \; \leq  C \lVert u-u_h \rVert _{L^2(L^2(\mathcal{D});\Gamma)} + Ch \big( \lVert y \rVert_{L^2(H^2(\mathcal{D});\Gamma)} + \lVert p \rVert_{L^2(H^2(\mathcal{D});\Gamma)} \big) \nonumber \\
		&\quad \;+ C \sum_{n=1}^{N}\bigg(\frac{k_n}{2}\bigg)^{q_n+1} \left( \dfrac{\lVert \partial^{q_n+1}_{\xi_n} y \rVert_{L^2(H^1(\mathcal{D});\Gamma)}+ \lVert \partial^{q_n+1}_{\xi_n} q \rVert_{L^2(H^1(\mathcal{D});\Gamma) } }{(q_n+1)!} \right).
	\end{align}
	Thus, by combining \eqref{ineq:thm1} and \eqref{ineq:thm2}, we deduce the desired result \eqref{ineq:error}.
\end{proof}


\section{Matrix Formulation}\label{sec:matrix}

In this section, we first construct the matrix formulation of the underlying problem \eqref{eqn:objec2}--\eqref{eqn:state2} by employing the \textquotedblleft optimize-then-discretize\textquotedblright \, approach; see, e.g., \cite{FTroeltzsch_2010a}. In this methodology, one first obtains the optimality system  \eqref{eqn:opti1} of the infinite-dimensional optimization problem, and then discretizes the optimality system by a stochastic discontinuous Galerkin method discussed in Section~\ref{sec:finite}. Later, we propose a low--rank variant of generalized minimal residual (GMRES) method with a suitable  preconditioner  to solve the corresponding  linear system.

\subsection{State system}

After an application of the discretization techniques discussed in Section~\ref{sec:finite}, one gets the following linear system for the state part of the optimality system \eqref{eqn:opti1}:
\begin{equation}\label{tensormtrx}
	\underbrace{\left( \sum_{i=0}^{N} \mathcal{G}_i \otimes \mathcal{K}_i \right)}_{\mathcal{A}} \, \mathbf{y} - \underbrace{\left(\mathcal{G}_0\otimes M\right)}_{\mathcal{M}} \mathbf{u}= \underbrace{\left( \sum_{i=0}^{N} \mathbf{g}_i \otimes \mathbf{f}_i\right)}_{\mathcal{F}},
\end{equation}
where
$
\mathbf{y} =\left( y_0, \ldots,y_{J-1} \right)^T \;\; \hbox{and} \; \; \mathbf{u} =\left( u_0, \ldots,u_{J-1} \right)^T \;\; \hbox{with} \;\;  y_i,u_i \in \mathbb{R}^{N_d}, i=0,1,\ldots,J-1
$
and $N_d$ corresponds to the degree of freedom for the spatial discretization. The mass matrix $ M\in \mathbb{R}^{N_{d} \times N_d} $, the stiffness matrices $\mathcal{K}_i \in \mathbb{R}^{N_{d} \times N_d}$,  and the right--hand side vectors $\mathbf{f}_i \in \mathbb{R}^{N_d}$  are given, respectively, by
\begin{eqnarray*}
	M(r,s)\hspace{-1mm}&=&\hspace{-3mm}  \sum \limits_{K \in \mathcal{T}_h} \int \limits_{K} \varphi_{r} \varphi_{s}  \, d\boldsymbol{x},  \\
	\mathcal{K}_0(r,s)\hspace{-1mm}&=&\hspace{-3mm}  \sum \limits_{K \in \mathcal{T}_h} \int \limits_{K} \left(  \overline{a} \, \nabla \varphi_{r} \cdot  \nabla \varphi_{s} + \overline{\mathbf{b}} \cdot \nabla \varphi_{r} \varphi_{s} \right) \, d\boldsymbol{x}  \\
	&& - \hspace{-2.5mm} \sum \limits_{E  \in \mathcal{E}^{0}_h \cup \mathcal{E}_h^{\partial}} \int \limits_E \big(  \average{\overline{a} \, \nabla \varphi_{r}}  \jump{\varphi_{s}} +  \average{\overline{a} \, \nabla \varphi_{s}}  \jump{\varphi_{r}} \big) \, ds \\
	&& + \hspace{-2.5mm} \sum \limits_{ E \in \mathcal{E}^{0}_h \cup \mathcal{E}_h^{\partial}} \frac{\sigma}{h_E} \int \limits_E  \jump{\varphi_{r}} \cdot \jump{\varphi_{s}} \, ds +  \hspace{-2.5mm}  \sum \limits_{K \in \mathcal{T}_h}\; \int \limits_{\partial K^{-} \backslash \partial \mathcal{D}}\hspace{-3mm} \overline{\mathbf{b}} \cdot \mathbf{n}_E (\varphi_{r}^e -\varphi_{r})\varphi_{s} \, ds\\
	&&	- \sum \limits_{K  \in \mathcal{T}_h} \; \int \limits_{\partial K^{-} \cap \partial \mathcal{D}^{-}}   \hspace{-3mm}\overline{\mathbf{b}} \cdot \mathbf{n}_E \varphi_{r} \varphi_{s}  \, ds, \\	
	\mathcal{K}_i(r,s) \hspace{-1mm}&=& \hspace{-3mm} \sum \limits_{K \in \mathcal{T}_h} \int \limits_{K} \left( \Big(\kappa_a \sqrt{\lambda_i^{a}}\phi_{i}^{a}\Big)  \nabla \varphi_{r} \cdot  \nabla \varphi_{s} +  \Big(\kappa_{\mathbf{b}} \sqrt{\lambda_i^{\mathbf{b}}}\phi_{i}^{\mathbf{b}}\Big)  \cdot \nabla \varphi_{r} \varphi_{s} \right) \, d\boldsymbol{x}  \\
	&&\hspace{-3mm} -\hspace{-3.5mm}	  \sum \limits_{ E \in \mathcal{E}^{0}_h \cup \mathcal{E}_h^{\partial}} \int \limits_E \left (\average{\Big(\kappa_{a} \sqrt{\lambda_i^{a}}\phi_{i}^{a}\Big)  \nabla \varphi_{r}}  \jump{\varphi_{s}} + \hspace{-1.2mm}	\average{\Big(\kappa_{a} \sqrt{\lambda_i^{a}}\phi_{i}^{a}\Big)  \nabla \varphi_{s}}  \jump{\varphi_{r}} \right) \, ds \\
	&& \hspace{-3mm}+ \sum \limits_{ E \in \mathcal{E}^{0}_h \cup \mathcal{E}_h^{\partial}} \frac{\sigma}{h_E} \int \limits_E  \jump{\varphi_{r}} \cdot \jump{\varphi_{s}} \, ds \\
	&&	\hspace{-3mm} + \sum \limits_{K \in \mathcal{T}_h} \int \limits_{\partial K^{-} \backslash \partial \mathcal{D}} \hspace{-4mm} \Big(\kappa_{\mathbf{b}} \sqrt{\lambda_i^{\mathbf{b}}}\phi_{i}^{\mathbf{b}}\Big)  \cdot \mathbf{n}_E (\varphi_{r}^e-\varphi_{r})\varphi_{s} \, ds \\
	&&	\hspace{-3mm} - \sum \limits_{T \in \mathcal{T}_h}  \int \limits_{\partial K^{-} \cap \partial \mathcal{D}^{-}} \hspace{-4mm}\Big(\kappa_{\mathbf{b}} \sqrt{\lambda_i^{\mathbf{b}}}\phi_{i}^{\mathbf{b}}\Big)  \cdot \mathbf{n}_E \varphi_{r} \varphi_{s} \, ds, \\
	f_0(s)\hspace{-1mm}&=&\hspace{-3mm}  \sum \limits_{K \in \mathcal{T}_h} \int \limits_{K} f\varphi_{s} \; d\boldsymbol{x}
	+ \sum \limits_{E \in \mathcal{E}_h^{\partial}}\frac{\sigma}{h_E} \int \limits_E  y_{DB}  \jump{\varphi_{s}} \; ds
	-\sum \limits_{E \in \mathcal{E}_h^{\partial}}  \int \limits_E y_{DB} \average{\overline{a} \nabla \varphi_{s}} \; ds \\
	&& \hspace{-3mm}- \sum \limits_{K \in \mathcal{T}_h}  \int \limits_{\partial K^{-} \cap \partial \mathcal{D}^{-}} \overline{\mathbf{b}} \cdot \mathbf{n}_E \, y_{DB} \,\varphi_{s}  \; ds, \\
	f_i(s)\hspace{-1mm}&=&\hspace{-3mm} \sum \limits_{E \in \mathcal{E}_h^{\partial}}\frac{\sigma }{h_E} \int \limits_E  y_{DB} \jump{\varphi_{s}} \; ds
	-\sum \limits_{E \in \mathcal{E}_h^{\partial}}  \int \limits_E y_{DB} \average{\Big(\kappa_{a} \sqrt{\lambda_i^{a}}\phi_{i}^{a}\Big) \nabla \varphi_{s}} \; ds  \;  \\
	\\
	&&\hspace{-3mm} - \sum \limits_{K \in \mathcal{T}_h} \; \int \limits_{\partial K^{-} \cap \partial \mathcal{D}^{-}} \Big(\kappa_{\mathbf{b}} \sqrt{\lambda_i^{\mathbf{b}}}\phi_{i}^{\mathbf{b}}\Big) \cdot \mathbf{n}_E \, y_{DB} \, \varphi_{s} \; ds,
\end{eqnarray*}
where $\{\varphi_i(\boldsymbol{x} )\}$ corresponds to the set of basis functions for the spatial discretization, i.e., $V_h = \hbox{span} \{\varphi_i(\boldsymbol{x} )\}$.

On the other hand, for $i=0,\ldots, N$,  the stochastic matrices $\mathcal{G}_i \in \mathbb{R}^{J \times J}$ and  the stochastic  vectors $\mathbf{g}_i \in \mathbb{R}^{J}$ in \eqref{tensormtrx} are given, respectively, by
\begin{subequations}\label{stocmtrx}
	\begin{align}
		& \mathcal{G}_0(r,s) = \ang{\psi_r \psi_s}, \qquad  \mathcal{G}_i(r,s)= \ang{\xi_i \psi_r \psi_s},  \\
		& \mathbf{g}_0(r)= \ang{\psi_r}, \qquad \qquad \mathbf{g}_i(r)= \ang{\xi_i \psi_r}.
	\end{align}
\end{subequations}
In \eqref{stocmtrx}, each stochastic basis function $\psi_i(\xi)$ is  a product of $N$ univariate orthogonal polynomials, i.e.,
$
\psi_i(\xi) = \psi_{i_1}(\xi) \psi_{i_2}(\xi) \ldots \psi_{i_N}(\xi),
$
where the multi--index $i$ is defined by $i=(i_1, i_2, \ldots, i_N)$ with $\sum \limits_{s=1}^N i_s \leq Q$. In this paper, Legendre polynomials are chosen as the stochastic basis functions since the underlying random variables have  uniform distribution on $[-\sqrt{3}, \sqrt{3}]$ \cite{PCiloglu_HYucel_2022}. Then, $\mathcal{G}_0$ becomes an identity matrix, whereas $\mathcal{G}_k, \; k>0$,  contains at most two nonzero entries per row; see, e.g., \cite{OGErnst_EUllmann_2010,CEPowell_HCElman_2009}. On the other hand, $\textbf{g}_i$ is the first column of $\mathcal{G}_i, \; i=0,1,\ldots,N$.

\subsection{Matrix formulation of the optimality system}

The discrete optimality system in \eqref{eqn:opti_disc} can be represented as a block matrix system including the state, adjoint, and variational equations in the finite dimensional setting. To solve the underlying block linear system, \textquotedblleft the primal--dual active set (PDAS) methodology  as a semi-smooth Newton step\textquotedblright \, is applied; see, e.g., \cite{MBergounioux_KIto_KKunisch_1999a} for more details. After a definition of the active sets
\begin{eqnarray*}
	A^{-} &=&\bigcup \{\boldsymbol{x} \in K \; : \; -\mathbf{p} - \mu \mathbf{u}_a < 0, \; \forall K \in \mathcal{T}_h  \},\\
	A^{+} &=&\bigcup \{\boldsymbol{x} \in K \; : \;-\mathbf{p} - \mu \mathbf{u}_b > 0, \; \forall K \in \mathcal{T}_h   \},
\end{eqnarray*}
and the inactive set
\begin{eqnarray*}
	\mathcal{I} &=& \mathcal{T}_h \setminus \left( A^{-} \cup  A^{+}\right),
\end{eqnarray*}
the block formulation becomes
\begin{subequations}\label{eqn:Cons_opti_matrix}
	\begin{align}
		& \mathcal{A} \mathbf{y} - \mathcal{M}_I \mathbf{u} = \mathcal{F}, \, \label{eqn:Cons_Mdisc}\\
		& \mathcal{A}^* \mathbf{p} - \mathcal{M}_{\gamma} \mathbf{y} = -\mathcal{F}^d,\label{eqn:Cons_MAdjdisc}\\
		&\hspace{-3mm} \left( \mathcal{G}_0 \otimes \diag(\mathbbm{1}_{\mathcal{I}})\right)  \mathbf{p} + \mu  \left( \mathcal{G}_0 \otimes I \right) \mathbf{u} = \left( \mathbf{g}_0 \otimes \mathbbm{1}_{A^{-}}\right) \mu\, \mathbf{u}_a + \left( \mathbf{g}_0 \otimes \mathbbm{1}_{A^{+}}\right) \mu \,\mathbf{u}_b ,\label{eqn:Cons_opti_Mdisc3}
	\end{align}
\end{subequations}
where
\begin{align*}
	\mathcal{M}_I &:= I \otimes M, \\
    \mathcal{F}^d &:= \mathbf{g}_0 \otimes \mathbf{y}^d  \hbox{      with    }  \mathbf{y}^d (s) = \sum \limits_{K \in \mathcal{T}_h} \int \limits_{K} y^d \varphi_{s} \; d\boldsymbol{x}, \\
	\mathcal{M}_{\gamma} &:= \left(\mathcal{G}_0 \otimes M\right) + \gamma\left(\mathcal{M}_0 \otimes M\right) \hbox{   with } \mathcal{M}_0  = \hbox{diag} \left( 0, \ang{ \psi_1 }^2, \ldots, \ang{ \psi_{J-1} }^2 \right),
\end{align*}
and  $\mathbbm{1}_{A^{-}}$, $\mathbbm{1}_{A^{+}}$, and $ \mathbbm{1}_{\mathcal{I}}$  correspond to the characteristic functions of $ A^{-} $, $ A^{+} $, and $\mathcal{I}$, respectively. Equivalently,  $\mathcal{M}_{\gamma}$ can be rewritten as
\[
\mathcal{M}_{\gamma} :=  \mathcal{G}_{\gamma}\otimes M, \quad \hbox{with} \quad  \mathcal{G}_{\gamma} :=\mathcal{G}_0 + \gamma\mathcal{M}_0,
\]
where
\begin{align}
	\mathcal{G}_{\gamma}(r,s) =
	\begin{cases}
		\ang{ \psi_0 }^2, &\mbox{if } r=s=0, \\
		\left(1+\gamma\right)\ang{ \psi_r }^2, &\mbox{if } r=s=1,\ldots,J-1, \\
		0, & \mbox{otherwise}.
	\end{cases}
\end{align}
Rearranging \eqref{eqn:Cons_opti_matrix} gives us  the following linear matrix system
\begin{align}\label{eqn:Cons_MatrixSystem}
	\left[ \begin{matrix}
		\mathcal{M}_{\gamma} & 0                & -\mathcal{A}^* \\
		0 & \mu \left( \mathcal{G}_0 \otimes I \right)  &  \mathcal{G}_0 \otimes \diag(\mathbbm{1}_{\mathcal{I}}) \\
		-\mathcal{A} & \mathcal{M}_I      & 0
	\end{matrix}\right]\hspace{-1.5mm}
	\left[ \begin{matrix}
		\mathbf{y}\\
		\mathbf{u}\\
		\mathbf{p}
	\end{matrix}\right]
    \hspace{-1.5mm}	= \hspace{-1.5mm}
	\left[ \begin{matrix}
		\mathcal{F}^d\\
		\mu \big(\left( \mathbf{g}_0 \otimes \mathbbm{1}_{A^{-}}\right)  \mathbf{u}_a + \left( \mathbf{g}_0 \otimes \mathbbm{1}_{A^{+}}\right)  \mathbf{u}_b\big) \\
		-\mathcal{F}
	\end{matrix}\right]\hspace{-1.3mm},
\end{align}
which is a saddle point system. We note that since Legendre polynomials are used, $\mathcal{G}_0 = I$, and hence, $\mathcal{M}_I =\mathcal{M} $.

In practical implementations, the saddle point system \eqref{eqn:Cons_MatrixSystem} typically becomes very large, depending on the length  of the random vector $\xi$ and the number of refinement in the spatial discretization. We break this curse of dimensionality  by using a low--rank  approximation, which reduces both the computational complexity and memory requirements by using  a Kronecker--product structure of the matrices  defined in \eqref{eqn:Cons_MatrixSystem}.

\subsection{Low-rank approach}\label{sec:lowrank}

We first introduce the notation and   some  basic properties of the low--rank approach. Let $\Theta=[\theta_1,\ldots,\theta_J]\in \mathbb{R}^{N_d \times J}$ and  let the operators \texttt{vec}($ \cdot $) and \texttt{mat}($ \cdot $) be  isomorphic mappings between $\mathbb{R}^{N_d \times J}$  and  $\mathbb{R}^{N_dJ}$ as following
\[
\texttt{vec}:\mathbb{R}^{N_d \times J} \rightarrow \mathbb{R}^{N_dJ},  \qquad \texttt{mat}: \mathbb{R}^{N_dJ} \rightarrow \mathbb{R}^{N_d\times J},
\]
where $N_d$ and $J$ are the degrees of freedom for the spatial discretization and the total degree of the multivariate stochastic basis polynomials, respectively. The matrix inner product is defined by $\ang{U,V}_F=\text{trace}(U^TV)$ with $\|U\|_F = \sqrt{\ang{U,V}_F}$.  Further, the following relation holds, see, e.g., \cite{DKressner_CTobler_2011}:
\begin{eqnarray}\label{ide:vec}
	\texttt{vec}(U \Theta V)=(V^T\otimes U)\texttt{vec}(\Theta).
\end{eqnarray}
Now, we can interpret  the system \eqref{eqn:Cons_MatrixSystem}  as follows
\begin{align}\label{eqn:MatrixSystem2}
	\underbrace{ \left[ \begin{matrix}
			\mathcal{G}_{\gamma}\otimes M & 0 & -\sum\limits_{i=0}^{N} \mathcal{G}_i \otimes \mathcal{K}_i^* \\
			0 & \mu  \left(\mathcal{G}_0\otimes I\right)  & \mathcal{G}_0\otimes \diag(\mathbbm{1}_{\mathcal{I}})	\\
			-\sum\limits_{i=0}^{N} \mathcal{G}_i \otimes \mathcal{K}_i & \mathcal{G}_0\otimes M  & 0
		\end{matrix}\right]  }_{\mathcal{L}}
	\underbrace{	\left[ \begin{matrix}
			\texttt{vec}(Y)\\
			\texttt{vec}(U)\\
			\texttt{vec}(P)
		\end{matrix}\right]}_{\Theta}
	=
	\underbrace{\left[ \begin{matrix}
			\texttt{vec}(B_1)\\
			\texttt{vec}(B_2)\\
			\texttt{vec}(B_3)
		\end{matrix}\right]}_{\mathcal{B}},
\end{align}
where
\begin{align*}
	& Y= \left(y_0,\ldots,y_{J-1}\right), \;  U= \left(u_0,\ldots,u_{J-1}\right), \; P= \left(p_0,\ldots,p_{J-1}\right), \;  B_1 =\texttt{mat}\left( \mathcal{F}^d\right), \\
	&  B_2 =\texttt{mat}\left(\mu \big(\left( \mathbf{g}_0 \otimes \mathbbm{1}_{A^{-}}\right) \mathbf{u}_a + \left( \mathbf{g}_0 \otimes \mathbbm{1}_{A^{+}}\right)  \mathbf{u}_b\big)\right), \; B_3 = \texttt{mat}\left( -\mathcal{F}\right).
\end{align*}
By the identity \eqref{ide:vec}, we have
\begin{align}\label{eqn:AX}
	\mathcal{L}\Theta = \texttt{vec} \left(
	\left[ \begin{matrix}
		MY\mathcal{G}_{\gamma}^T - \sum\limits_{i=0}^{N} \mathcal{K}_i^*P\mathcal{G}_i^T  \\
		\mu IU\mathcal{G}_0^T + \diag(\mathbbm{1}_{\mathcal{I}})P\mathcal{G}_0^T\\
		- \sum\limits_{i=0}^{N} \mathcal{K}_i Y \mathcal{G}_i^T + MU\mathcal{G}_0^T
	\end{matrix}\right]
	\right) = \texttt{vec} \left(
	\left[ \begin{matrix}
		B_1  \\
		B_2\\
		B_3
	\end{matrix}\right]
	\right).
\end{align}
Assuming that the  matrices $\Theta$  and  $\mathcal{B}$ have the following  low--rank representations, see, e.g., \cite{MStoll_TBreiten_2015,PBenner_TBreiten_2013,MAFreitag_DLHGreen_2018},
\begin{align}\label{eqn:lowRank}
	Y &= W_YV_Y^T, \qquad W_Y \in \mathbb{R}^{N_d \times r_Y}, \; V_Y \in \mathbb{R}^{J \times r_Y},\nonumber\\
	U &= W_UV_U^T, \qquad W_U \in \mathbb{R}^{N_d \times r_U}, \; V_U \in \mathbb{R}^{J \times r_U},\\
	P &= W_PV_P^T, \qquad W_P \in \mathbb{R}^{N_d \times r_P}, \; V_P \in \mathbb{R}^{J \times r_P},\nonumber\\
	B_1 &= B_{11} B_{12}^T \qquad B_{11} \in \mathbb{R}^{N_d \times r_{B_1}}, \; B_{12} \in \mathbb{R}^{J \times r_{B_1}},\nonumber\\
	B_2 &= B_{21} B_{22}^T \qquad B_{21} \in \mathbb{R}^{N_d \times r_{B_2}}, \; B_{22} \in \mathbb{R}^{J \times r_{B_2}},\nonumber\\
	B_3 &= B_{31} B_{32}^T \qquad B_{31} \in \mathbb{R}^{N_d \times r_{B_3}}, \; B_{32} \in \mathbb{R}^{J \times r_{B_3}},\nonumber
\end{align}
with $r_Y,r_U, r_P, r_{B_1}, r_{B_2}, r_{B_3} \ll N_d, J$,  \eqref{eqn:AX} can be stated as follows
\begin{align}\label{eqn:AX2}
	\left[ \begin{matrix}
		MW_YV_Y^T\mathcal{G}_{\gamma}^T - \sum\limits_{i=0}^{N} \mathcal{K}_i^*W_PV_P^T\mathcal{G}_i^T  \\
		\mu IW_UV_U^T\mathcal{G}_0^T + \diag(\mathbbm{1}_{\mathcal{I}}) W_PV_P^T\mathcal{G}_0^T\\
		- \sum\limits_{i=0}^{N} \mathcal{K}_i W_YV_Y^T \mathcal{G}_i^T + MW_UV_U^T\mathcal{G}_0^T
	\end{matrix}\right] =
	\left[ \begin{matrix}
		B_{11} B_{12}^T \\
		B_{21} B_{22}^T\\
		B_{31} B_{32}^T
	\end{matrix}\right],
\end{align}
where \texttt{vec} operator is ignored. Moreover, the three block rows in \eqref{eqn:AX2} can be written as
\begin{subequations}\label{eqn:AX3}
	\begin{align}
		&\underbrace{\left[ \begin{matrix}
				MW_Y  &  - \sum\limits_{i=0}^{N} \mathcal{K}_i^*W_P	\end{matrix}\right] }_{\widehat{W}_1}
		\underbrace{\left[ \begin{matrix}
				\mathcal{G}_{\gamma}V_Y &  \mathcal{G}_iV_P	\end{matrix}\right]^T}_{\widehat{V}_1^T},\\
		&\underbrace{\left[ \begin{matrix}
				\mu I W_U  &  \diag(\mathbbm{1}_{\mathcal{I}}) W_P	\end{matrix}\right] }_{\widehat{W}_2}
		\underbrace{\left[ \begin{matrix}
				\mathcal{G}_0 V_U &  \mathcal{G}_0 V_P	\end{matrix}\right]^T}_{\widehat{V}_2^T},\\
		&\underbrace{\left[ \begin{matrix}
				- \sum\limits_{i=0}^{N} \mathcal{K}_i W_Y  &  M W_U	\end{matrix}\right] }_{\widehat{W}_3}
		\underbrace{\left[ \begin{matrix}
				\mathcal{G}_i V_Y & \mathcal{G}_0 V_U	\end{matrix}\right]^T}_{\widehat{V}_3^T},
	\end{align}
\end{subequations}
in low--rank formats $\widehat{W}_i\widehat{V}_i^T \; \hbox{for} \; i=1,2,3$. By the usage of  \eqref{eqn:AX3}, the low--rank approximate solutions to \eqref{eqn:MatrixSystem2} can be obtained; see  Algorithm~\ref{alg:GMRES} modified from \cite{MAFreitag_DLHGreen_2018} for details of the low--rank implementation of GMRES. Moreover, with the help of the following fact
\[
\text{trace}(A^TB) = \texttt{vec}(A)^T \texttt{vec}(B),
\]
the inner products $\ang{A,B}_F=\text{trace}(A^TB)$ in the iterative low--rank algorithm  can be computed efficiently. For instance,  the inner product computation in Algorithm~\ref{alg:GMRES} denoted by
\[
\texttt{trprod}(A_{11}, A_{12}, A_{21}, A_{22}, A_{31}, A_{32}, B_{11}, B_{12}, B_{21}, B_{22}, B_{31}, B_{32})
\]
can be computed as following
\begin{eqnarray*}
	\ang{A,B}_F &=& \text{trace}\left( \left(A_{11}A_{12}^T\right)^T \left(B_{11}B_{12}^T\right)^T\right) + \text{trace}\left( \left(A_{21}A_{22}^T\right)^T \left(B_{21}B_{22}^T\right)^T\right) \nonumber\\
	&& + \text{trace}\left( \left(A_{31}A_{32}^T\right)^T \left(B_{31}B_{32}^T\right)^T\right) \\
	&=& \text{trace}\left( A_{11}^TB_{11}A_{12}^TB_{12}\right) +\text{trace}\left( A_{21}^TB_{21}A_{22}^TB_{22}\right)\nonumber \\
	&& + \text{trace}\left( A_{31}^TB_{31}A_{32}^TB_{32}\right),
\end{eqnarray*}
where
\[
A = \texttt{vec} \left( \left[ \begin{matrix}
	A_{11} A_{12}^T\\
	A_{21} A_{22}^T\\
	A_{31} A_{32}^T
\end{matrix}\right]\right), \qquad
B = \texttt{vec} \left( \left[ \begin{matrix}
	B_{11} B_{12}^T\\
	B_{21} B_{22}^T\\
	B_{31} B_{32}^T
\end{matrix}\right]\right).
\]

\begin{algorithm}[htp!]
	\footnotesize
	\caption{Low--rank  preconditioned GMRES (LRPGMRES)}
	\label{alg:GMRES}
	\hspace*{\algorithmicindent}{\textbf{Input:} Coefficient matrix $\mathcal{L} : \mathbb{R}^{3N_d\times J} \rightarrow \mathbb{R}^{3N_d\times J} $, inverse of the preconditioner matrix  $\mathcal{P}^{-1}_0: \mathbb{R}^{3N_d\times J} \rightarrow \mathbb{R}^{3N_d\times J}$, and right--hand side matrix $\mathcal{B}$ in the low--rank formats. Truncation operator $\mathcal{T}$ with given tolerance $\epsilon_{trunc}$.}\\
	\hspace*{\algorithmicindent}{\textbf{Output:} Matrix $\Theta \in \mathbb{R}^{3N_d\times J}$ satisfying $\lVert \mathcal{L}(\Theta)-\mathcal{B} \rVert_F / \|\mathcal{B}\|_F \leq \epsilon_{tol}$.}\\
	\vspace{-3mm}
	\begin{algorithmic}[1]
		\State{Choose initial guess $\Theta_{11}^{(0)}, \Theta_{12}^{(0)}, \Theta_{21}^{(0)}, \Theta_{22}^{(0)}, \Theta_{31}^{(0)}, \Theta_{33}^{(0)}$.}
		\State{$(\widetilde{\Theta}_{11}, \widetilde{\Theta}_{12}, \widetilde{\Theta}_{21}, \widetilde{\Theta}_{22}, \widetilde{\Theta}_{31}, \widetilde{\Theta}_{32}) = \mathcal{L}(\Theta_{11}^{(0)}, \Theta_{12}^{(0)}, \Theta_{21}^{(0)}, \Theta_{22}^{(0)}, \Theta_{31}^{(0)}, \Theta_{32}^{(0)})$. \hfill $\widetilde{\Theta}_{ij}\leftarrow \mathcal{T}(\widetilde{\Theta}_{ij}) $}
		\State{$R_{11}^{(0)} = \{B_{11}, \;\; -\Theta_{11}^{(0)}\} $, \quad  $R_{12}^{(0)} = \{B_{12}, \;\; \Theta_{12}^{(0)}\}. $}
		\State{$R_{21}^{(0)} = \{B_{21}, \;\; -\Theta_{21}^{(0)}\} $, \quad  $R_{22}^{(0)} = \{B_{22}, \;\; \Theta_{22}^{(0)}\}. $ \hfill $R_{ij}^{(0)}\leftarrow \mathcal{T}(R_{ij}^{(0)})$ }
		\State{$R_{31}^{(0)} = \{B_{31}, \;\; -\Theta_{31}^{(0)}\} $, \quad  $R_{32}^{(0)} = \{B_{32}, \;\; \Theta_{32}^{(0)}\}. $}
		\State{$ \lVert R^0\rVert = \sqrt{\hbox{trprod}(R_{11}^{(0)},\ldots,R_{11}^{(0)},\ldots)}.$}	
		\State{$V_{11}^{(0)} = R_{11}^{(0)}/ \lVert R^0\rVert_F $, \quad  $V_{12}^{(0)} = R_{12}^{(0)}.$}		
		\State{$V_{21}^{(0)} = R_{21}^{(0)}/ \lVert R^0\rVert_F $, \quad  $V_{22}^{(0)} = R_{22}^{(0)}.$ \hfill $V_{ij}^{(0)}\leftarrow \mathcal{T}(V_{ij}^{(0)})$}	
		\State{$V_{31}^{(0)} = R_{31}^{(0)}/ \lVert R^0\rVert_F $, \quad  $V_{32}^{(0)} = R_{32}^{(0)}.$}	
		\State{$\gamma=[\gamma_1,0,\ldots,0]$, \qquad  $\gamma_1=\sqrt{\hbox{trprod}(V_{11}^{(0)}, \ldots, V_{11}^{(0)},\ldots)}.$}
		\While{$ i \leq \text{maxit}$}
		\State{$(Z_{11}^{(i)}, Z_{12}^{(i)}, Z_{21}^{(i)}, Z_{22}^{(i)}, Z_{31}^{(i)}, Z_{32}^{(i)})\hspace{-1mm}=\hspace{-1mm}\mathcal{P}^{-1}_0(V_{11}^{(i)}, V_{12}^{(i)}, V_{21}^{(i)}, V_{22}^{(i)}, V_{31}^{(i)}, V_{32}^{(i)})$, \hfill $Z_{ij}^{(i)}\leftarrow \mathcal{T}(Z_{ij}^{(i)}) $}
		\State{$(W_{11}, W_{12}, W_{21}, W_{22}, W_{31}, W_{32}) = \mathcal{L}(Z_{11}^{(i)}, Z_{12}^{(i)}, Z_{21}^{(i)}, Z_{22}^{(i)}, Z_{31}^{(i)}, Z_{32}^{(i)})$. \hfill $W_{ij}\leftarrow \mathcal{T}(W_{ij}) $}
		\For{$ j=1,\ldots,i$}
		\State{$m_{j,i}=\sqrt{\hbox{trprod}(W_{11}, \ldots, V_{11}^{(j)},\ldots)}$}
		\State{$W_{11} = \{W_{11}, \;\; -m_{j,i} V_{11}^{(i)}\} $, \quad  $W_{12} = \{W_{12}, \;\; V_{12}^{(j)}\}. $}
		\State{$W_{21} = \{W_{21}, \;\; -m_{j,i} V_{21}^{(i)}\} $, \quad  $W_{22} = \{W_{22}, \;\; V_{22}^{(j)}\}. $ \hfill $W_{ij}\leftarrow \mathcal{T}(W_{ij}) $}
		\State{$W_{31} = \{W_{31}, \;\; -m_{j,i} V_{31}^{(i)}\} $, \quad  $W_{32} = \{W_{32}, \;\; V_{32}^{(j)}\}. $}
		\EndFor
		\State{$m_{i+1,i}=\sqrt{\hbox{trprod}(W_{11}, \ldots, W_{11},\ldots)}$}
		\State{$V_{11}^{(i+1)} = W_{11}/m_{i+1,k} $, \quad  $V_{12}^{(k+1)} = W_{12}.$}	
		\State{$V_{21}^{(i+1)} = W_{21}/m_{i+1,k} $, \quad  $V_{22}^{(k+1)} = W_{22}.$ \hfill $V_{ij}^{(i+1)}\leftarrow \mathcal{T}(V_{ij}^{(i+1)})$ }	
		\State{$V_{31}^{(i+1)} = W_{31}/m_{i+1,k} $, \quad  $V_{32}^{(k+1)} = W_{32}.$}
		\State{Perform Givens rotations for the ith column of $m$:}
		\For{$ j=1,\ldots,i-1$}
		\State{$\left[ \begin{matrix}
				m_{j,i}\\
				m_{j+1,i}
			\end{matrix}\right] =
			\left[ \begin{matrix}
				c_j & s_j\\
				-s_j & c_j
			\end{matrix}\right]\left[ \begin{matrix}
				m_{j,i}\\
				m_{j+1,i}
			\end{matrix}\right]  $}
		\EndFor
		\State{Compute ith Givens rotation, and perform for $\gamma$ and last column of $m$.}
		\State{	$\left[ \begin{matrix}
				\gamma_{i}\\
				\gamma_{i+1}
			\end{matrix}\right] =
			\left[ \begin{matrix}
				c_i & s_i\\
				-s_i & c_i
			\end{matrix}\right]\left[ \begin{matrix}
				\gamma_{i}\\
				0
			\end{matrix}\right] $}
		\State{$ m_{i,i}=c_i m_{i,i}+s_i m_{i+1,i} $, \qquad  $m_{i+1,i}=0$.}
		\If{$ \lvert\gamma_{i+1}\rvert \leq \epsilon_{tol} $ }
		\State{Compute $y$ from  $My=\xi$, where $(M)_{j,i} = m_{j,i}$.}
		\State{$Y_{11} = \{y_1V_{11}^{(1)}, \ldots, y_k V_{11}^{(i)}\} $, \quad  $Y_{12} = \{V_{12}^{(1)}, \ldots, V_{12}^{(i)}\}. $}
		\State{$Y_{21} = \{y_1V_{21}^{(1)}, \ldots, y_k V_{21}^{(i)}\} $, \quad  $Y_{22} = \{V_{22}^{(1)}, \ldots, V_{22}^{(i)}\}. $\hfill $Y_{ij}\leftarrow \mathcal{T}(Y_{ij})$}
		\State{$Y_{31} = \{y_1V_{31}^{(1)}, \ldots, y_k V_{31}^{(i)}\} $, \quad  $Y_{32} = \{V_{32}^{(1)}, \ldots, V_{32}^{(i)}\}. $}
		\State{$(\widetilde{Y}_{11}, \widetilde{Y}_{12}, \widetilde{Y}_{21}, \widetilde{Y}_{22}, \widetilde{Y}_{31}, \widetilde{Y}_{32}) \hspace{-1mm} = \hspace{-1mm}\mathcal{P}^{-1}_0(Y_{11}, Y_{12}, Y_{21}, Y_{22}, Y_{31}, Y_{32}$). \hfill $\widetilde{Y}_{ij}\leftarrow \mathcal{T}(\widetilde{Y}_{ij}) $}
		\State{$\Theta_{11} = \{\Theta_{11}^{(0)}, \;\; \widetilde{Y}_{11}\} $, \quad  $\Theta_{12} = \{\Theta_{12}^{(0)}, \;\; \widetilde{Y}_{12}\}. $}
		\State{$\Theta_{21} = \{\Theta_{21}^{(0)}, \;\; \widetilde{Y}_{21}\} $, \quad  $\Theta_{22} = \{\Theta_{22}^{(0)}, \;\; \widetilde{Y}_{22}\}. $ \hfill $ \Theta_{ij}\leftarrow \mathcal{T}(\Theta_{ij})$}
		\State{$\Theta_{31} = \{\Theta_{31}^{(0)}, \;\; \widetilde{Y}_{31}\} $, \quad  $\Theta_{32} = \{\Theta_{32}^{(0)}, \;\; \widetilde{Y}_{32}\}. $}		
		\EndIf      	
		\EndWhile
	\end{algorithmic}
\end{algorithm}

During the iteration process, the rank of  low--rank factors can increase either via matrix vector products or vector (matrix) additions. Thus, the cost of rank--reduction techniques is kept under control by using truncation based on singular values \cite{DKressner_CTobler_2011} or truncation based on coarse--grid rank reduction \cite{KLee_HCElman_2017}. Our approach is based on the discussion in  \cite{MStoll_TBreiten_2015,PBenner_AOnwunto_MStoll_2015}, where  a truncated SVD of  $ U= W^TV \approx B \diag(\sigma_{1},\dots,\sigma_{r})C^T$  is constructed for the largest $r$ singular values, which are greater than the given truncation tolerance $\epsilon_{trunc}$. In Algorithm~\ref{alg:GMRES}, this process is done by the truncation operator $\mathcal{T}$. Further, in the numerical simulations,  a rather small truncation tolerance $\epsilon_{trunc}$  is used to represent the full--rank solution as accurate as possible.

We know that iterative methods such as GMRES exhibit a better convergence in terms of the number of iterations when they are used with a suitable preconditioner. The low--rank variants also display the same behaviour so that we use  a block diagonal mean-based preconditioner  of the form
\begin{align*}
	\mathcal{P}_0 =  \left[
	\begin{matrix}
		\mathcal{M}_{\gamma} & 0 & 0\\
		0 & \mu \left( \mathcal{G}_0 \otimes I \right) & 0\\
		0 & 0 & \widetilde{S}
	\end{matrix}
	\right],
\end{align*}
where $\widetilde{S} =  \big( \mathcal{G}_0 \otimes \widetilde{\mathcal{K}}_0 \big) \mathcal{M}_{\gamma}^{-1}  \big( \mathcal{G}_0 \otimes \widetilde{\mathcal{K}}_0 \big)^T$  corresponds to the approximated Schur complement with $ \widetilde{\mathcal{K}}_0 = \mathcal{K}_0 + \sqrt{\frac{1+\gamma}{\mu}} M\diag(\mathbbm{1}_{\mathcal{I}})$; see, e.g,  \cite{PBenner_AOnwunto_MStoll_2016,CEPowell_HCElman_2009}.


\section{Numerical Results}\label{sec:num}

This section contains  a set of numerical experiments to illustrate the performance of proposed discretization techniques and  a low--rank variant of GMRES approach. All numerical simulations  are done in  MATLAB R2021a on an Ubuntu Linux machine with 32 GB RAM. Iterative approaches are ended when the residual becomes smaller than the  given tolerance value $\epsilon_{tol} = 5 \times 10^{-3}$ or  the maximum iteration number ($\#iter_{max} =250$)  is reached. The truncation tolerance $\epsilon_{trunc}=10^{-8}$ is chosen, such that  $\epsilon_{trunc} \leq  \epsilon_{tol}$;  otherwise, one would iterate the noise during the low--rank process.

In the numerical experiments, the random coefficient $\eta$ is described by the following  covariance function
\begin{align}\label{Cov:Gauss}
	\mathbb{C}_{\eta} (\boldsymbol{x} ,\boldsymbol{y} ) = \kappa^2 \prod_{n=1}^{2}  e^{-\lvert x_n -y_n \rvert /\ell_n }, \quad \forall (\boldsymbol{x} ,\boldsymbol{y} ) \in \mathcal{D}
\end{align}
with the correlation length $\ell_n$.  Linear elements are used to generate discontinuous Galerkin basis, whereas Legendre polynomials are taken as the stochastic basis functions since the underlying random variables have uniform distribution  over $[-\sqrt{3},\sqrt{3}]$, that is,  $ \xi_j \sim \mathcal{U}[-\sqrt{3},\sqrt{3}], \; j=1, \ldots, N$.  Explicit eigenpairs $(\lambda_j, \phi_j)$ of the covariance function \eqref{Cov:Gauss} can be found in \cite{GJLord_CEPowell_TShardlow_2014}. Further, all  parameters used in the simulations are described in Table~\ref{tab:para}.

\begin{table}[htp!]
	\caption{Descriptions of the parameters used in the  simulations.}\label{tab:para}
	\centering
\begin{tabular}{cl}
	Parameter & Description                                             \\ \hline
	$N_d$     & degrees of freedom for the spatial discretization           \\
	$N$       & truncation number in KL expansion                       \\
	$Q$       & highest order of basis polynomials for the stochastic domain \\
	$\mu$     & regularization parameter of  the control $u$            \\
	$\gamma$  & risk-aversion parameter                                 \\
	$\nu$     & viscosity parameter                                     \\
	$\ell$    & correlation length                                      \\
	$\kappa$  & standard deviation
\end{tabular}
\end{table}

\subsection{Unconstrained problem with random diffusion parameter}\label{ex:uncos_diff}

As a first benchmark problem, we consider an unconstrained optimal control problem, that is, $\mathcal{U}^{ad}=\mathcal{U}$, having a random diffusion coefficient defined on $\mathcal{D} = [-1,1]^2$ with the source function $f(\boldsymbol{x} )=0$,  the convection parameter $ \mathbf{b}(\boldsymbol{x} )=(0,1)^T$, and the Dirichlet boundary condition
\[
y_{DB}(\boldsymbol{x})=
\begin{cases}
	y_{DB}(x_1,-1)=x_1, & y_{DB}(x_1,1)=0,\\
	y_{DB}(-1,x_2)=-1,  & y_{DB}(1,x_2)=1.
\end{cases}
\]
The random diffusion parameter is chosen as $a(\boldsymbol{x},\omega)= \nu \,\eta(\boldsymbol{x},\omega)$, where the random field $\eta(\boldsymbol{x},\omega)$ has the unity mean with the corresponding covariance function \eqref{Cov:Gauss} and $\nu$ is the viscosity parameter. The desired state (or target) $y^d$ corresponds to the stochastic solution of the forward model by taking $u(\boldsymbol{x})=0$. The desired state exhibits exponential boundary layer near $x_2 =1$, where  the solution changes in a dramatic manner. Therefore, the boundary layer becomes more visible as $\nu$ decreases; see Figure~\ref{fig:ex1_solns} for the mean of the state $\mathbb{E}[y_h]$, the desired state $y^d$,  and the corresponding control $u_h$ for various values of the viscosity parameter $\nu$.

\begin{figure}[htp!]
	\centering
	\includegraphics[width=1.0\textwidth]{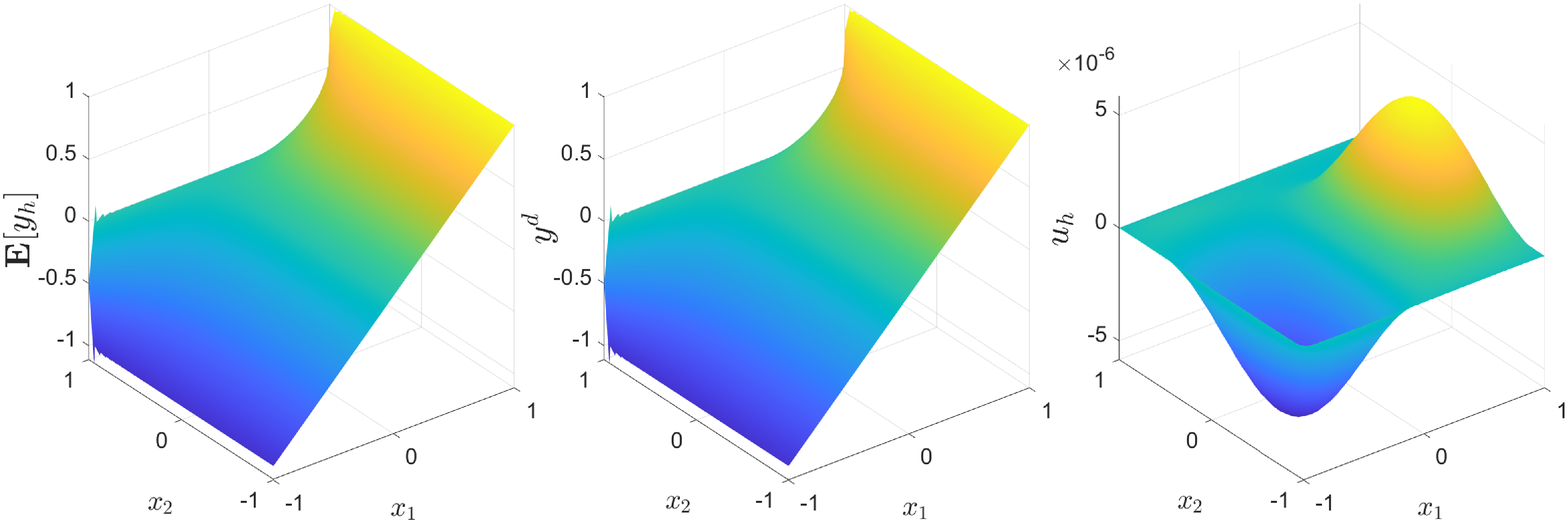}	
	\includegraphics[width=1.0\textwidth]{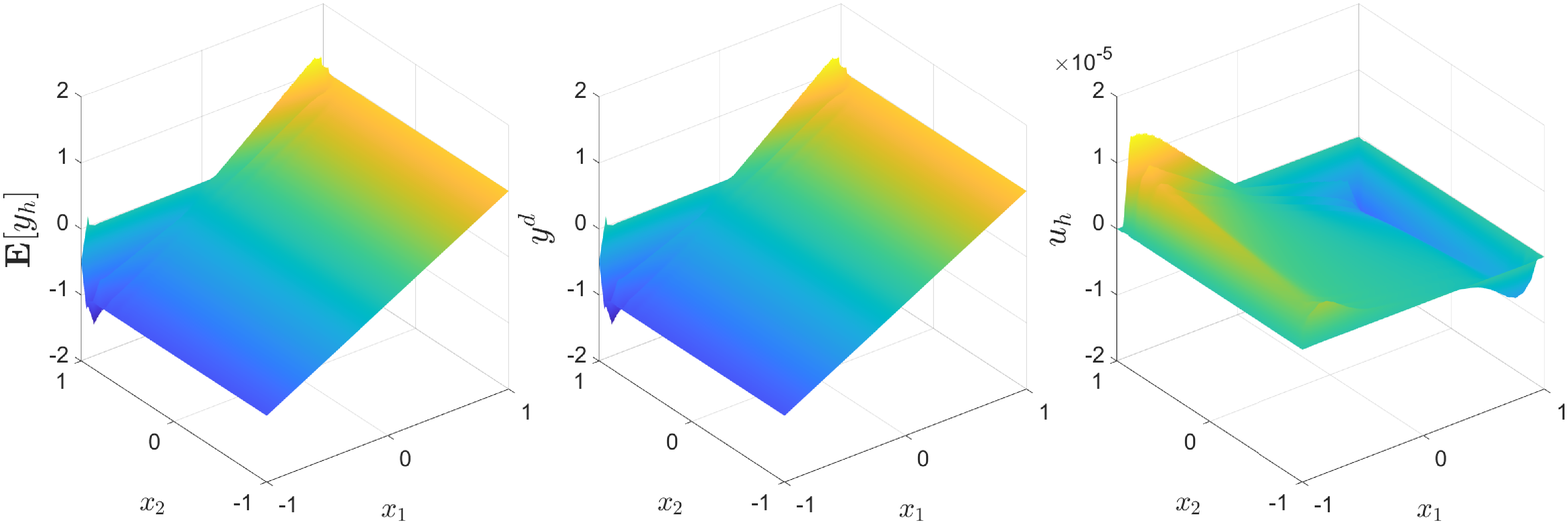}	
	\includegraphics[width=1.0\textwidth]{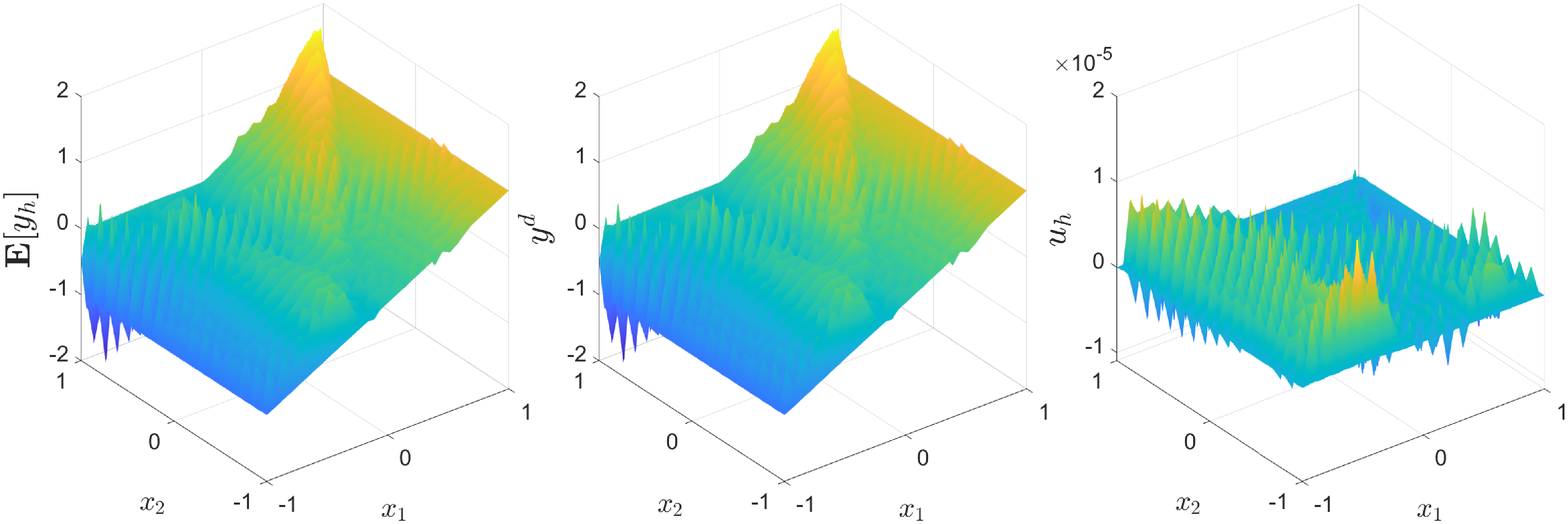}	
	\caption{Example~\ref{ex:uncos_diff}:  Simulations of  the mean of state $\mathbb{E}[y_h]$, the desired state (target) $y^d$, and the control $u_h$ (from left to right) obtained by $\mathcal{L}\backslash \mathcal{B}$ with $N_d=6144$, $N=3$, $Q=3$, $\ell=1$, $\kappa=0.05$, $\mu = 1$, and $\gamma = 1$  for varying $\nu=1, 10^{-2}, 10^{-4}$ (from top to bottom).}
	\label{fig:ex1_solns}	
\end{figure}

\begin{table}[htp!]
	\caption{Example~\ref{ex:uncos_diff}: Computational values of the cost functional $\mathcal{J}(u_h)$ and tracking term $\lVert y_h-y^d\rVert^2_{\mathcal{X}}$ obtained by $\mathcal{L}\backslash \mathcal{B}$ with $N_d=6144$, $N=3$, $Q=3$, $\ell=1$, $\kappa=0.5$, and $\gamma=1$ for varying values of the viscosity parameter $\nu$ and the regularization parameter $\mu$.}\label{tab:ex1_numu}
	\centering
	\begin{tabular}{cccccc}
		&  & $\mu = 1$  & $\mu = 10^{-2}$ & $\mu = 10^{-4}$ & $\mu = 10^{-6}$ \\
		\hline
		\hline
		\multicolumn{1}{c|}{\multirow{2}{*}{$\nu=1$}}
		& \multicolumn{1}{c|}{$\mathcal{J}(u_h)$} & 1.2393e-05 & 2.5034e-06 &1.0257e-06  & 9.7533e-07 \\
		\multicolumn{1}{c|}{} & \multicolumn{1}{c|}{$ \lVert y_h-y^d\rVert^2_{\mathcal{X}}$}
		& 5.4679e-06 & 7.1725e-07  & 3.2113e-08 & 1.6931e-09\\
		\hline
		\hline
		\multicolumn{1}{c|}{\multirow{2}{*}{$\nu=10^{-2}$}}
		& \multicolumn{1}{c|}{$\mathcal{J}(u_h)$} & 1.4349e-05 & 8.3285e-06& 7.2067e-07& 6.0683e-07 \\
		\multicolumn{1}{c|}{} & \multicolumn{1}{c|}{$ \lVert y_h-y^d\rVert^2_{\mathcal{X}}$}
		& 1.3120e-05 & 4.1452e-06  & 9.0731e-08  & 3.5983e-09 \\
		\hline
		\hline
		\multicolumn{1}{c|}{\multirow{2}{*}{$\nu=10^{-4}$}} & \multicolumn{1}{c|}{$\mathcal{J}(u_h)$}
		& 1.3675e-05 & 1.1798e-06 & 3.9380e-07 & 3.7211e-07 \\
		\multicolumn{1}{c|}{} & \multicolumn{1}{c|}{$ \lVert y_h-y^d\rVert^2_{\mathcal{X}}$}
		& 1.5285e-05 & 4.3924e-07 & 1.1422e-07 & 8.3896e-09 \\
		\hline
		\hline
	\end{tabular}
\end{table}

\begin{table}[htp!]
	\caption{Example~\ref{ex:uncos_diff}: Peak values of the states' variance obtained by $\mathcal{L}\backslash \mathcal{B}$ with $N_d=6144 $, $N=3$, $Q=3$, $\ell=1$,  $\nu=1$, and $\mu = 1$  for varying values of the risk--aversion $\gamma$ and the standard deviation $\kappa$.}\label{tab:ex1_variance}
	\centering
	\begin{tabular}{cccc}
		\multicolumn{1}{l}{}                    & \multicolumn{1}{l}{$\kappa = 0.05$} & \multicolumn{1}{l}{$\kappa = 0.25$} & \multicolumn{1}{l}{$\kappa = 0.5$} \\ \hline
		\multicolumn{1}{c|}{$\gamma = 0$}       & 4.5406e-05                          & 1.1980e-03                          & 5.7995e-03                          \\
		\multicolumn{1}{c|}{$\gamma = 1$}       & 4.1995e-05                          & 1.0984e-03                          & 5.1327e-03                          \\
		\multicolumn{1}{c|}{$\gamma = 2$}       & 3.8944e-05                          & 1.0110e-03                          & 4.5807e-03                          \\
		\multicolumn{1}{c|}{$\gamma = 3$}       & 3.6207e-05                          & 9.3377e-04                          & 4.1243e-03                          \\
		\multicolumn{1}{c|}{$\gamma = 4$}       & 3.3731e-05                          & 8.6520e-04                          & 3.7409e-03
	\end{tabular}
\end{table}

\begin{table}[htp!]
	\caption{Example~\ref{ex:uncos_diff}: Total CPU times (in seconds) and memory (in KB) for $N_d=6144 $, $Q=3$, $\ell=1$, $\mu = 10^{-2}$, $\gamma = 1$, and $\kappa=0.5$.}
	\label{tab:ex1_momory}
	\centering
	\begin{tabular}{clll}
		$\mathcal{L}\backslash \mathcal{B}$ & $\nu=10^{0}$  & $\nu=10^{-2}$  & $\nu=10^{-4}$  \\ \hline
		N                                   & CPU (Memory)  & CPU (Memory)   & CPU (Memory)   \\ \hline
		2                                   & 116.0 (2880)  & 116.1 (2880)   & 117.5 (960)    \\
		3                                   & 779.6 (5760)  & 787.7 (5760)   & 813.0 (5760) \\
		4                                   & OoM           & OoM            & OoM
	\end{tabular}
\end{table}

\begin{table}[htp!]
	\caption{Example~\ref{ex:uncos_diff}: Total number of iterations,  total rank of the truncated solutions, total CPU times (in seconds),  relative residual, and memory demand of the solution (in KB) with $N_d=6144 $, $Q=3$, $\ell=1$, $\kappa=0.5$,  $\nu=1$,  $\gamma=0$, and  the mean-based preconditioner $\mathcal{P}_0$ for varying values of $N$ and $\mu$.} \label{tab:kappa2}
	\centering
	\begin{tabular}{ccccc}
		&                             & $\mu =1$     & $\mu= 10^{-2}$ & $\mu = 10^{-4}$ \\
		\hline
		\hline
		\multicolumn{1}{c|}{\multirow{5}{*}{$N = 4$}}
		& \multicolumn{1}{c|}{\#iter} & 250 & 250 & 250  \\
		\multicolumn{1}{c|}{}
		& \multicolumn{1}{c|}{Rank}   & 51 & 51 &  51 \\
		\multicolumn{1}{c|}{}
		& \multicolumn{1}{c|}{CPU}    & 40126.1 & 40017.9 & 39950.4  \\
		\multicolumn{1}{c|}{}
		& \multicolumn{1}{c|}{Resi.}  & 3.5759e-02 & 3.3521e-02 & 3.7375e-02  \\
		\multicolumn{1}{c|}{}
		& \multicolumn{1}{c|}{Memory} & 2461.9 & 2461.9 & 2461.9  \\
		\hline
		\hline
		\multicolumn{1}{c|}{\multirow{5}{*}{$N = 5$}}
		& \multicolumn{1}{c|}{\#iter} & 250 & 250 & 250  \\
		\multicolumn{1}{c|}{}
		& \multicolumn{1}{c|}{Rank}   & 84 & 84 & 84  \\
		\multicolumn{1}{c|}{}
		& \multicolumn{1}{c|}{CPU}    & 91366.2 & 90544.0 & 90021.2  \\
		\multicolumn{1}{c|}{}
		& \multicolumn{1}{c|}{Resi.}  & 2.1672e-02 & 2.3056e-02 & 3.1080e-02  \\
		\multicolumn{1}{c|}{}
		& \multicolumn{1}{c|}{Memory} & 4068.8 & 4068.8 & 4068.8  \\
		\hline
		\hline
		\multicolumn{1}{c|}{\multirow{5}{*}{$N = 6$}}
		& \multicolumn{1}{c|}{\#iter} & 250 & 250 & 250  \\
		\multicolumn{1}{c|}{}
		& \multicolumn{1}{c|}{Rank}   & 126 & 126 & 126  \\
		\multicolumn{1}{c|}{}
		& \multicolumn{1}{c|}{CPU}    & 208643.4 & 207964.0 & 207464.4  \\
		\multicolumn{1}{c|}{}
		& \multicolumn{1}{c|}{Resi.}  & 1.8357e-02 & 1.8064e-02 & 2.0494e-02 \\
		\multicolumn{1}{c|}{}
		& \multicolumn{1}{c|}{Memory} & 6130.7 & 6130.7 & 6130.7 \\
		\hline
		\hline
		\multicolumn{1}{c|}{\multirow{5}{*}{$N = 7$}}
		& \multicolumn{1}{c|}{\#iter} & 250 & 250 & 250  \\
		\multicolumn{1}{c|}{}
		& \multicolumn{1}{c|}{Rank}   &  180 &  180 & 180  \\
		\multicolumn{1}{c|}{}
		& \multicolumn{1}{c|}{CPU}    & 355115.9 & 355167.5 & 355652.3  \\
		\multicolumn{1}{c|}{}
		& \multicolumn{1}{c|}{Resi.}  & 1.1208e-02 & 1.3833e-02 & 1.4914e-02 \\
		\multicolumn{1}{c|}{}
		& \multicolumn{1}{c|}{Memory} & 8808.8 & 8808.8 & 8808.8  \\
		\hline
		\hline
	\end{tabular}
\end{table}

Table~\ref{tab:ex1_numu} shows the values of the cost functional $\mathcal{J}(u_h)$ and tracking term $\lVert y_h-y^d\rVert^2_{\mathcal{X}}$ obtained by $\mathcal{L}\backslash \mathcal{B}$ for  various values of the viscosity parameter $\nu$ and the regularization parameter $\mu$. We observe that  the tracking term   and the objective functional  become smaller as $\mu$ decreases. Moreover, Table~\ref{tab:ex1_variance} exhibits that  the peak values of states' variance  can be reduced by increasing the value of the
parameter $\gamma$.

Next, we display the performance of $\mathcal{L}\backslash \mathcal{B}$ in terms of total CPU times (in seconds) and storage requirements (in KB) in Table~\ref{tab:ex1_momory}. However, we could not report some numerical results since the simulation is ended with \textquotedblleft out of memory\textquotedblright, which we have denoted as \textquotedblleft OoM\textquotedblright. To handle the curse of dimensionality and so increase the value of truncation number $N$, we need  effective numerical approaches or solvers such as a low--rank variant of GMRES iteration with a mean based preconditioner discussed in Section~\ref{sec:lowrank}.

\begin{figure}[htp!]
	\centering
	\includegraphics[width=1.0\textwidth]{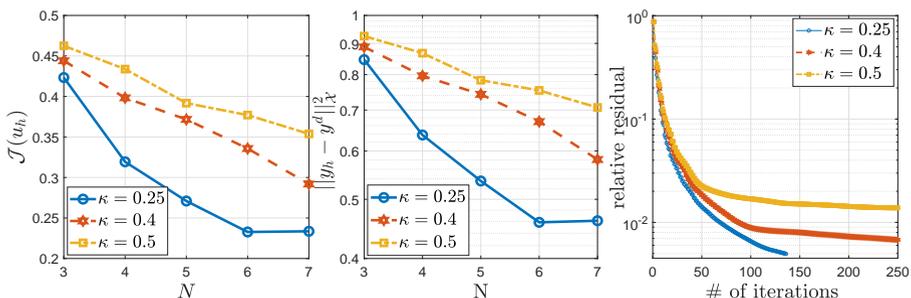}	
	\caption{Example~\ref{ex:uncos_diff}: Behaviours of the cost functional $\mathcal{J}(u_h)$ (left),  the tracking term $\lVert y_h-y^d\rVert^2_{\mathcal{X}}$ (middle), and the relative residual (right) with $N_d=6144 $, $Q=3$, $\ell=1$,  $\nu=1$, $\mu = 10^{-2}$, $\gamma=0$, and  the mean-based preconditioner $\mathcal{P}_0$ for varying values of $\kappa$.}
	\label{fig:Ex1_kappa}		
\end{figure}

Table~\ref{tab:kappa2}  reports the results of the simulations by considering various data sets in the low--rank format. By keeping other parameters fixed, we show results for varying truncation number $N$ in KL expansion  and regularization parameter $\mu$  for $\kappa=0.5$ in  Table~\ref{tab:kappa2}.  When $N$ increases, the complexity of the problem increases in terms of the number of rank,  memory, and CPU time. Another key observation is that  the relative residual decreases independently  of the value of $\mu$ while increasing $N$.

Next, we investigate the effect of the standard deviation parameter $\kappa$ on the numerical simulations. Figure~\ref{fig:Ex1_kappa} displays the behaviours of the cost functional $\mathcal{J}(u_h)$, the tracking term $\lVert y_h-y^d\rVert^2_{\mathcal{X}}$, and the relative residual for various values of $\kappa$. We observe that the values of $\mathcal{J}(u_h)$ and $\lVert y_h-y^d\rVert^2_{\mathcal{X}}$ decrease monotonically as the value of $\kappa$ increases. Moreover, the  low--rank variant of preconditioned GMRES method yields convergence behaviour for all values of $\kappa$. Lastly, Figure~\ref{fig:Ex1_resgamma} shows that the speed of convergence of relative residual decreases  by increasing the value of risk--aversion parameter $\gamma$ in the beginning of the iteration.

\begin{figure}[htp!]
	\centering
	\includegraphics[width=1.0\textwidth]{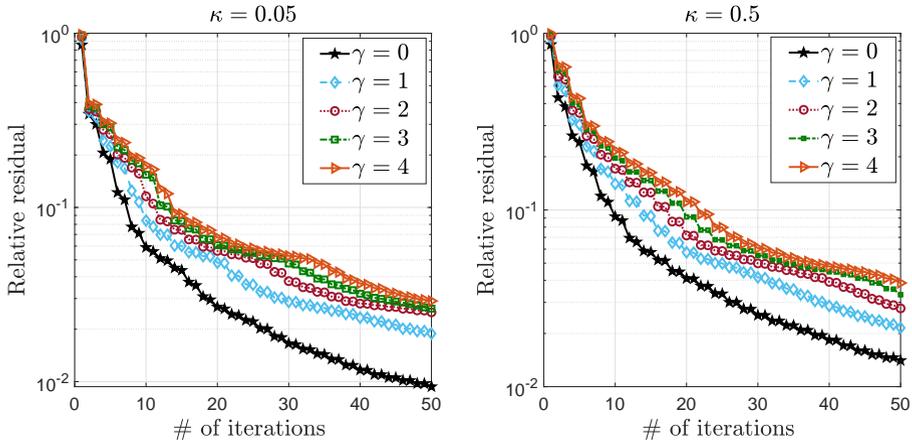}	
	\caption{Example~\ref{ex:uncos_diff}: Convergence of LRPGMRES with $N_d=6144 $, $N=5$, $Q=3$, $\ell=1$, $\mu=1$, $\nu=1$ for varying $\kappa$ and $\gamma$.}
	\label{fig:Ex1_resgamma}		
\end{figure}

\subsection{Unconstrained problem with random convection parameter} \label{ex:uncos_conv}

Our second example is an unconstrained optimal control problem  containing random velocity input parameter. To be precise, we set the deterministic diffusion parameter $a(\boldsymbol{x},\omega)=\nu > 0$, the deterministic source function $f(\boldsymbol{x})=0$, and homogeneous Dirichlet boundary conditions on the spatial domain $\mathcal{D} = [-1,1]^2$. On the other hand, the random velocity field $\mathbf{b}(\boldsymbol{x},\omega)$ is defined as
$
\mathbf{b}(\boldsymbol{x},\omega)= \left( \eta(\boldsymbol{x},\omega), \eta(\boldsymbol{x},\omega) \right)^T,
$
where the random input $\eta(\boldsymbol{x},\omega)$  has the unity mean, i.e., $\overline{\eta}(\boldsymbol{x}) = 1$. Further, the desired state $y^d$ is given by
\[
y^d(\boldsymbol{x}) = \exp \left[ -64 \left( \left( x_1- \frac{1}{2}\right)^2 + \left( x_2 - \frac{1}{2}\right)^2 \right)\right].
\]

Figure~\ref{fig:Ex2_statemean} and ~\ref{fig:Ex2_controlmean}	 display, respectively,  the  mean of state $\mathbb{E}[y_h]$ and the  control $u_h$ for varied values of the regularization parameter $\mu$ obtained by solving the full--rank system $\mathcal{L}\backslash \mathcal{B}$. As the previous example, we observe that  the state $y_h$ becomes closer to the target solution $y^d$ while $\mu$ decreases.

\begin{figure}[htp!]
	\centering
	\includegraphics[width=1\textwidth]{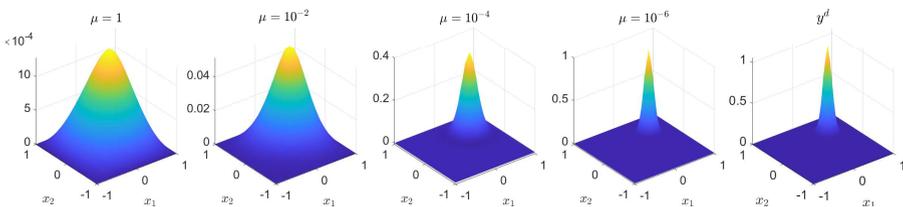}	
	\caption{Example~\ref{ex:uncos_conv}: Simulations of the mean  of state $\mathbb{E}[y_h]$ obtained by  $\mathcal{L}\backslash \mathcal{B}$  with $N_d=6144$, $N=3$, $Q=3$, $\ell=1$, $\kappa=0.05$, $\nu=1$, and $\gamma = 0$  for varying  $\mu=1, 10^{-2}, 10^{-4}, 10^{-6}$ and the desired state $y^d$.}
	\label{fig:Ex2_statemean}		
\end{figure}

\begin{figure}[htp!]
	\centering
	\includegraphics[width=1\textwidth]{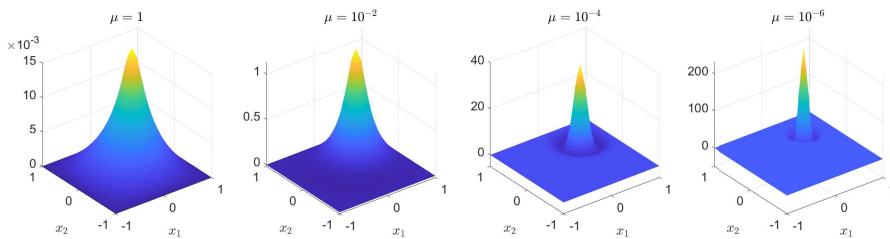}		
	\caption{Example~\ref{ex:uncos_conv}: Simulations  of the control $u_h$ obtained solving by $\mathcal{L} \backslash \mathcal{B}$ with $N_d=6144$, $N=3$, $Q=3$, $\ell=1$, $\kappa=0.05$,  $\nu=1$, and $\gamma = 0$  for varying regularization parameter $\mu=1, 10^{-2}, 10^{-4}, 10^{-6}$.}
	\label{fig:Ex2_controlmean}	
\end{figure}

\begin{figure}[htp!]
	\centering
	\includegraphics[width=1\textwidth]{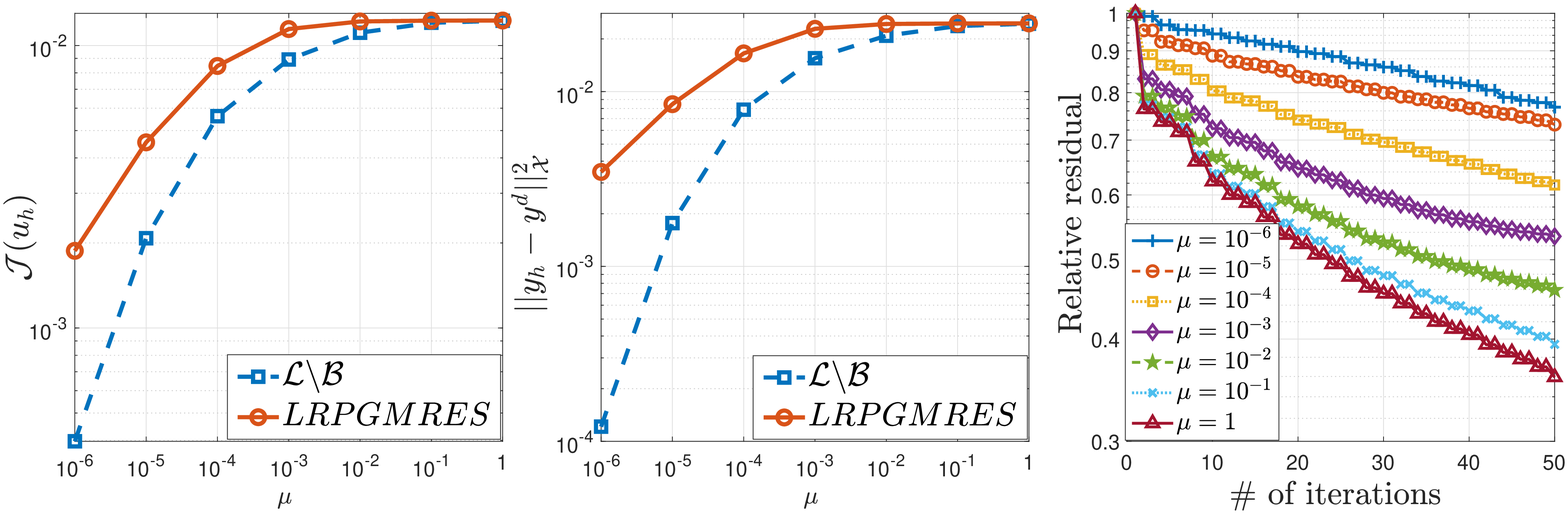}		
	\caption{Example~\ref{ex:uncos_conv}: Behaviours of the cost functional $\mathcal{J}(u_h)$ (left),  the tracking term $\lVert y_h-y^d\rVert^2_{\mathcal{X}}$ (middle), and the relative residual (right) with $N_d=6144 $, $N=3$, $Q=3$, $\kappa = 0.05$, $\ell=1$,  $\nu=1$,  $\gamma=0$, and  the mean-based preconditioner $\mathcal{P}_0$ for varying $\mu$.}
	\label{fig:Ex2_mu}		
\end{figure}

\begin{table}[htp!]
	\caption{Example~\ref{ex:uncos_conv}: Simulation results showing total number of iterations, ranks of the truncated solutions, total CPU times (in seconds),  relative residual, and memory demand of the solution (in KB) with $N_d=6144 $, $N=3$, $Q=3$, $\ell=1$,  $\nu=1$, $\mu = 10^{-6}$, and  the mean-based preconditioner $\mathcal{P}_0$ for varying  $\gamma$.} \label{tab:Ex2_gamma}
	\centering
	\begin{tabular}{cccccc}
		& $\gamma =0$ & $\gamma= 10^{-6}$ & $\gamma= 10^{-4}$ & $\gamma= 10^{-2}$ & $\gamma= 1$  \\ \hline
		\multicolumn{1}{c|}{\#iter} & 250         & 250               & 250               & 250               & 250                \\
		\multicolumn{1}{c|}{Rank}   & 29          & 30                & 30                & 30                & 21                  \\
		\multicolumn{1}{c|}{CPU}    & 24468.2     & 19383.2           & 17382.0           & 17422.8           & 17797.1         \\
		\multicolumn{1}{c|}{Resi.}  & 2.1733e-01  & 2.6663e-01        & 4.0428e-01        & 6.9542e-01        & 9.1911e-01   \\
		\multicolumn{1}{c|}{Memory} & 1396.5      & 1444.7            & 1444.7            & 1444.7            & 963.2
	\end{tabular}
\end{table}

Next, we compare the full--rank solutions obtained by solving the  system $\mathcal{L}\backslash \mathcal{B}$ with the low--rank ones. Figure~\ref{fig:Ex2_mu} exhibits  behaviours of the cost functionals $\mathcal{J}(u_h)$ (left),  the tracking term $\lVert y_h-y^d\rVert^2_{\mathcal{X}}$ (middle), and the relative residual (right) for varying values of the regularization parameter $\mu$. The key observation is that the low--rank solutions display the same pattern with the full--rank solutions as $\mu$ increases. Moreover, Table~\ref{tab:Ex2_gamma}  reports the results of the simulations by considering various values of the  risk--aversion parameter $\gamma$. As the previous example, the relative residual becomes smaller as decreasing the value of $\gamma$.

\begin{figure}[htp!]
	\centering
	\includegraphics[width=1\textwidth]{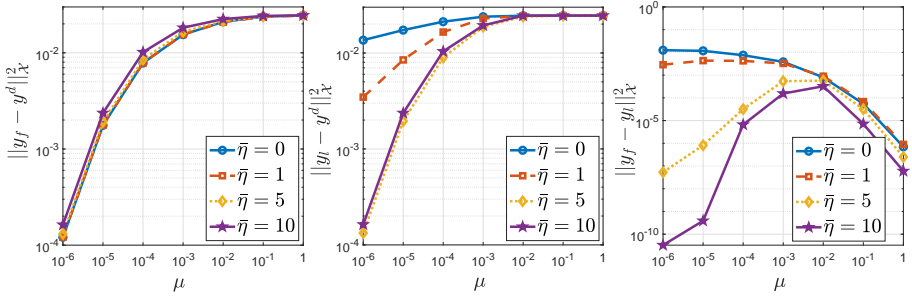}		
	\caption{Example~\ref{ex:uncos_conv}: Behaviour of the differences  $\lVert y_f-y^d\rVert^2_{\mathcal{X}}$ (left), $\lVert y_l-y^d\rVert^2_{\mathcal{X}}$ (middle), and $\lVert y_f-y^l\rVert^2_{\mathcal{X}}$ (right), where  the full--rank and low--rank solutions are denoted by $y_f$ and $y_l$, respectively, computed by solving the full--rank  and low--rank systems  with $N_d=6144 $, $N=3$, $Q=3$, $\ell=1$, $\mu=10^{-6}$, $\gamma = 0$, $\nu=1$, and $\kappa=0.05$ for varying  values of the  mean of random input $\eta(x)$.} 
	\label{fig:Ex2_meanparameter}	
\end{figure}

Last, we investigate the effect of the  mean of random input $\eta(\boldsymbol{x} )$ on both full--rank and low--rank solutions. Denoting the full--rank solution and the low-rank solution by $y_f$ and $y_l$, respectively,  the behavior of  the differences  $\lVert y_f-y^d\rVert^2_{\mathcal{X}}$, $\lVert y_l-y^d\rVert^2_{\mathcal{X}}$, and $\lVert y_f-y^l\rVert^2_{\mathcal{X}}$ computed by solving the full--rank and low--rank systems is displayed in Figure~\ref{fig:Ex2_meanparameter}.  As increasing the mean  of random input $\eta(\boldsymbol{x} )$, the difference between the full--rank and low--rank solutions becomes smaller.


\begin{figure}[htp!]
	\centering
	\includegraphics[width=1\textwidth]{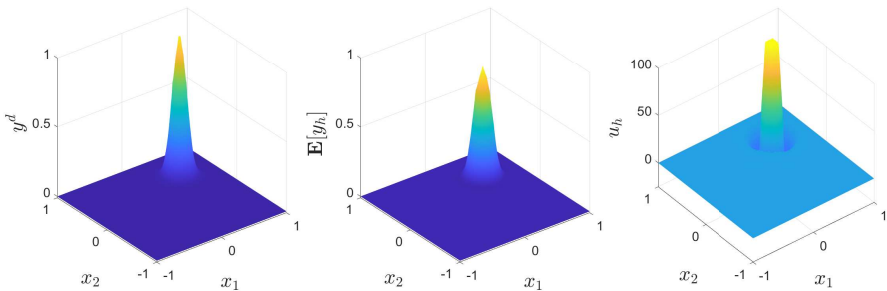}	
	\caption{Example~\ref{ex:cos_conv}: Simulations of the desired state $y^d$, the mean of state $\mathbb{E}[y_h]$,  and the control $u_h$ (from left to right) obtained  by $\mathcal{L}\backslash \mathcal{B}$ with $N_d=6144$, $N=3$, $Q=3$, $\ell=1$, $\kappa=0.05$, and $\nu=1$.}
	\label{fig:Ex3_statemean}		
\end{figure}

\begin{table}[htp!]
	\centering
	\caption{Example~\ref{ex:cos_conv}:  Simulation results showing the memory demand of the solution (in KB), the objective function $\mathcal{J}(u_h)$, the tracking term $\lVert y_h-y^d\rVert^2_{\mathcal{X}}$,  the difference of the full--rank and low--rank $\lVert y_{f}-y_{l}\rVert^2_{\mathcal{X}}$, ranks of the truncated solutions,  and the relative residual with $N_d=6144 $, $Q=3$, $\ell=1$,  $\nu=1$, and the mean-based preconditioner $\mathcal{P}_0$.} \label{tab:Ex3}
	\begin{tabular}{ccccccc}
		& Memory & $\mathcal{J}(u_h)$ & $\lVert y_h-y^d\rVert^2_{\mathcal{X}}$ & $\lVert y_{f}-y_{l}\rVert^2_{\mathcal{X}}$  & Rank & Res.       \\ \hline
		\multicolumn{1}{c|}{$N=3$}  & 5744.0 & 5.508e-04     & 6.031e-04                            &                                             &      &            \\ \hline
		\multicolumn{1}{c|}{$N=3$}  & 1444.7 & 1.046e-02     & 2.091e-02                            & 1.802e-02                                   & 30   & 9.232e-01  \\
		\multicolumn{1}{c|}{$N=4$}  & 2461.9 & 1.029e-02     & 2.056e-02                            & 1.769e-02                                   & 51   & 9.161e-01  \\
		\multicolumn{1}{c|}{$N=5$}  & 4068.8 & 9.996e-03     & 1.996e-02                            & 1.713e-02                                   & 84   & 9.042e-01  \\
		\multicolumn{1}{c|}{$N=6$}  & 6130.7 & 9.616e-03     & 1.919e-02                            & 1.642e-02                                    & 126   & 8.895e-01
	\end{tabular}
\end{table}

\subsection{Constrained problem with random convection parameter} \label{ex:cos_conv}
Last, we consider a constrained optimal control problem containing a random velocity parameter. Except from the set up of  Example~\ref{ex:uncos_conv}, we have an upper bound for the control variable such as $u_b =100$. Taking the results in the previous example into account, the regularization and risk--averse parameters are chosen as $\mu = 10^{-6}$ and $\gamma = 0$, respectively.

Figure~\ref{fig:Ex3_statemean} displays the desired state $y^d$, the mean of state $\mathbb{E}[y_h]$,  and the control $u_h$  obtained  by $\mathcal{L}\backslash \mathcal{B}$. We observe that the upper bound of the control constrained is satisfied. In Table~\ref{tab:Ex3}, we compare the low--rank solutions with the full--rank ones. As increasing the truncation number $N$, we obtain better results as expected.


\section{Conclusions}\label{sec:conc}

In this paper, we have numerically studied the statistical moments of a robust deterministic optimal control problem subject to a convection diffusion equation having random coefficients. With the help of the stochastic discontinuous Galerkin method, we transform the original problem into a large system consisting of deterministic optimal control problems for each realization of the random coefficients. However, we could not obtain some numerical results when increasing the value of truncation number $N$. Therefore, to reduce computational time and memory requirements, we have used a low--rank variant of  GMRES iteration with a mean based preconditioner (LRPGMRES). It has been shown in the numerical simulations that LRPGMRES can be an alternative to solve such large systems. As a future study, randomness can be considered in different forms, for instance, in boundary conditions, desired state, or geometry. Moreover, to handle curse of dimensionality, reduced order models, see, e.g., \cite{MGunzburger_JMing_2011,PChen_AQuarteroni_2014}, can be an alternative to the low--rank approximations.


\bmhead{Funding}
This work was supported by TUBITAK 1001 Scientific and Technological Research Projects Funding Program with project number 119F022. The authors would also like to express their sincere thanks to the anonymous referees for their most valuable suggestions.

\section*{Declarations}

\bmhead{Conflict of interest}
The authors declare no competing interests.






\end{document}